\newtheorem{prop}{Proposition}[section]
\newtheorem{assumption}{Assumption}[section]
\newtheorem{lemma}{Lemma}[section]
\newtheorem{theo}{Theorem}[section]
\newtheorem{remarka}{Remark}[section]
\numberwithin{equation}{section}
\theoremstyle{remark}
\newtheorem{rmq}[prop]{Remark}
\newcommand{\D}{\Delta}
\newcommand{\n}{\nabla}
\newcommand{\p}{\partial}
\newcommand{\h}{\hookrightarrow}
\newcommand{\ort}{\perp}
\newcommand{\la}{\langle}
\newcommand{\ra}{\rangle}
\newcommand{\ct}{\langle t\rangle}
\newcommand{\cs}{\langle s\rangle}
\newcommand{\e}{\epsilon}
\newcommand{\ve}{\varepsilon}
\newcommand{\R}{\mathbb{R}}
\newcommand{\Z}{\mathbb{Z}}
\newcommand{\va}{\varphi}
\title{Global well-posedness of the Euler-Korteweg system for small irrotational 
data}
\author{Corentin Audiard
\thanks{Sorbonne Universit\'es, UPMC Univ Paris 06, UMR 7598, 
Laboratoire Jacques-Louis Lions, F-75005, Paris, France}
\thanks{CNRS, UMR 7598, Laboratoire Jacques-Louis Lions, F-75005, Paris, France}
and Boris Haspot  \thanks{Universit\'e Paris Dauphine, PSL Research University, Ceremade, Umr Cnrs 7534, Place du Mar\' echal De Lattre De Tassigny 75775 Paris cedex 16 (France), haspot@ceremade.dauphine.fr}}
\date{}
\begin{document}
\maketitle
\begin{abstract}
The Euler-Korteweg equations are a modification of the Euler equations that takes into account
capillary effects. In the general case they form a quasi-linear system that can be recast as 
a degenerate Schr\"odinger type equation. Local well-posedness (in subcritical Sobolev spaces) 
was obtained by Benzoni-Danchin-Descombes in any space dimension, however, except in some special 
case (semi-linear with particular pressure) no global well-posedness is known. We prove here 
that under a natural stability condition on the pressure, global well-posedness holds in dimension 
$d\geq 3$ for small irrotational initial data. The proof is based on a modified energy 
estimate, standard dispersive properties if $d\geq 5$, and a careful study of the nonlinear 
structure of the quadratic terms in dimension $3$ and $4$ involving the theory of space time resonance.
\end{abstract}

\renewcommand{\abstractname}{R\'esum\'e}
\begin{abstract}
Les \'equations d'Euler-Korteweg sont une modification des \'equations d'Euler prenant en 
compte l'effet de la capillarit\'e. Dans le cas g\'en\'eral elles forment un syst\`eme 
quasi-lin\'eaire qui peut se reformuler comme une \'equation de Schr\"odinger d\'eg\'en\'er\'ee.
L'existence locale de solutions fortes a \'et\'e obtenue par Benzoni-Danchin-Descombes en toute
dimension, mais sauf cas tr\`es particuliers il n'existe pas de r\'esultat d'existence 
globale. En dimension au moins $3$, et sous une condition naturelle de stabilit\'e sur la pression 
on prouve que pour toute 
donn\'ee initiale irrotationnelle petite, la solution est globale. La preuve s'appuie sur une 
estimation d'\'energie modifi\'ee. En dimension 
au moins $5$ les propri\'et\'es standard de dispersion suffisent pour conclure tandis que les 
dimensions $3$ et $4$ requi\`erent une \'etude pr\'ecise de la structure des nonlin\'earit\'es 
quadratiques pour utiliser la m\'ethode des r\'esonances temps espaces.
\end{abstract}
\tableofcontents

\section{Introduction}
The compressible Euler-Korteweg equations read 
\begin{equation}
\label{EK}
\left\{
\begin{array}{ll}
\p_{t}\rho+{\rm div}(\rho u)=0,\ (x,t)\in \R^d\times I\\
\p_{t}u+u\cdot \n u+\nabla g(\rho)=\n \bigg(K(\rho)\Delta \rho +\frac{1}{2}K'(\rho)|\nabla \rho|^2
\bigg),\ (x,t)\in \R^d\times I\\
(\rho,u)|_{t=0}=(\rho_{0},u_{0}),\ x\in \R^d.
\end{array}\right.
\end{equation}
Here $\rho$ is the density of the fluid, $u$ the velocity, $g$ the bulk chemical potential, related to 
the pressure by $p'(\rho)=\rho g'(\rho)$. $K(\rho)>0$ corresponds to the capillary coefficient. 
On the left hand side we recover the Euler equations, while the right hand side of the second 
equation contains the so called Korteweg tensor, which is intended to take into account capillary 
effects and models in particular the behavior at the interfaces of a liquid-vapor mixture. The system arises 
in various settings: the case $K(\rho)=\kappa/\rho$ corresponds to the 
so-called equations of quantum hydrodynamics (which are formally equivalent to the Gross-Pitaevskii 
equation through the Madelung transform, on this topic see the survey of Carles et al \cite{CDS}).\\
As we will see, in the irrotational case the system can be reformulated as a 
quasilinear Schr\"odinger equation, this is in sharp contrast with the  non homogeneous 
incompressible case where the system is hyperbolic (see 
\cite{Feireisl}).
For a general $K(\rho)$, local well-posedness was proved in \cite{Benzoni1}. Moreover \eqref{EK}
has a rich structure with special solutions such as planar traveling waves, 
namely solutions that only depend on $y=t-x\cdot \xi$, $\xi\in \R^d$, with possibly 
$\lim_{\infty}\rho(y)\neq \lim_{-\infty} \rho(y)$.
The orbital stability and instability of such solutions has been largely studied over the last 
ten years (see \cite{BDD2} and the review article of Benzoni-Gavage \cite{Benzoni5}).
The existence and non uniqueness of global non dissipative weak solutions \footnote{These global weak solution do not verify the energy inequality} in the spirit of De Lellis-Szekelehidi\cite{DelSz})
was tackled by Donatelli et al \cite{Donatellietcie}, while weak-strong uniqueness has been 
very recently studied by Giesselman et al \cite{GLT}.\\
Our article deals with a complementary issue, namely the global well-posedness and asymptotically 
linear behaviour of small smooth solutions near the constant state $(\rho,u)=(\overline{\rho},0)$.
To our knowledge we obtain here the 
first global well-posedness result for \eqref{EK} in the case of a general pressure and capillary 
coefficient. This is in strong contrast with the existence of infinitely many \emph{weak} 
solutions from \cite{Donatellietcie}.
\\
A precise statement of our results is provided in theorems \ref{theolarged},\ref{theod4} of
section \ref{results}, but first we will briefly 
discuss the state of well-posedness theory, the structure of the equation, and the tools available 
to tackle the problem. 
Let us start with the local well-posedness result from \cite{Benzoni1}.
\begin{theo}
For $d\geq 1$, let $(\overline{\rho},\overline{u})$ be a smooth solution whose 
derivatives decay rapidly at infinity, $s>1+d/2$. Then for $(\rho_0,u_0)\in (\overline{\rho},\overline{u})
+H^{s+1}(\R^d)\times H^s(\R^d)$, 
$\rho_0$ bounded away from $0$, there exists $T>0$ and a unique solution $(\rho,u)$ of 
$(\ref{EK})$ such that $(\rho-\overline{\rho},u-\overline{u})$ belongs to 
$C([0,T],H^{s+1}\times H^s)\cap C^1([0,T],H^{s-1}\times H^{s-2})$ and $\rho$ remains bounded away 
from $0$ on $[0,T]\times \R^d$.
\end{theo}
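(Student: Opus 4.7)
The plan is to reformulate \eqref{EK} as a degenerate Schr\"odinger-type system and to solve it by a viscous approximation combined with uniform tame energy estimates. First I would introduce the auxiliary variable $w=\sqrt{K(\rho)/\rho}\,\n\rho$, whose evolution---derived by taking a gradient of the continuity equation---couples with the momentum equation so that the complex unknown $z=u+iw$ satisfies a quasilinear equation of the form
$$\p_t z + (u\cdot\n)z + iL_\rho z = R(\rho,u),$$
where $L_\rho$ is a second-order self-adjoint elliptic operator with coefficients depending on $\rho$ (essentially $\sqrt{K/\rho}\,\n(\sqrt{K\rho}\,{\rm div}\,\cdot)$) and $R$ collects nonlinear lower-order terms, most notably $\n g(\rho)$. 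The asymmetric regularity $\rho-\cd{\rho}\in H^{s+1}$, $u-\cd{u}\in H^s$ in the statement corresponds exactly to $z\in H^s$, since $w$ absorbs the extra half-derivative on $\rho$.

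Next I would build approximate solutions by adding a parabolic regularization $\ve\D z$. The regularized problem is semilinear and can be solved on some interval $[0,T_\ve]$ by contraction in $C([0,T_\ve],H^s)$, while the pointwise lower bound on $\rho$ is preserved by the continuity equation on a possibly smaller time interval. The core of the argument is obtaining $\ve$-independent bounds: apply $\La^s=(1-\D)^{s/2}$ to the equation, pair with $\La^s z$ in $L^2$, and take the real part. Skew-adjointness of $iL_\rho$ makes the dispersive principal term invisible to the energy, leaving two commutators. The transport commutator $[\La^s,u\cdot\n]z$ is controlled by Kato--Ponce estimates. The dispersive commutator $[\La^s,L_\rho]z$ is a priori a first-order operator in $z$ with coefficients depending on $\n\rho$, and this is precisely where the $H^{s+1}$ regularity on $\rho$ is essential. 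Since $s>1+d/2$, the embedding $H^s\h W^{1,\infty}$ holds and a differential inequality of the form $\tfrac{d}{dt}\|z\|_{H^s}^2\lesssim C(\|z\|_{W^{1,\infty}})\|z\|_{H^s}^2$ closes by Gronwall, giving an $\ve$-independent lifespan.

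Compactness then allows me to pass to the limit $\ve\to 0$, recovering a solution in the required class. Uniqueness follows by applying the same energy scheme to the difference of two solutions one level lower ($H^{s-1}\times H^{s-2}$), where tame estimates still close thanks to $s-1>d/2$. The main obstacle is the quasilinear commutator $[\La^s,L_\rho]z$: because $L_\rho$ is second-order with variable coefficients, a naive bound loses a derivative. This loss disappears only thanks to the precise skew-symmetric structure of $iL_\rho$, so that the top-order contribution cancels against itself after integration by parts, leaving only tame remainders involving $\n\rho$ in Sobolev norms---hence no Kato-type smoothing effect needs to be invoked, and the argument adapts uniformly across dimensions.
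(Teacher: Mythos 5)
This statement is the local well-posedness theorem of Benzoni-Gavage, Danchin and Descombes, cited as \cite{Benzoni1}; the paper does not reprove it, but it recalls the central device of the energy estimate in Section~\ref{secenergie}. Your reformulation in terms of $w=\sqrt{K/\rho}\,\n\rho$ and $z=u+iw$ is exactly the right starting point, and the overall scheme (regularize, uniform tame estimate, Gronwall, compactness) is a reasonable one. The gap is in the step where you close the energy estimate, and it is an essential one.

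You claim the commutator loss in $[\La^s,L_\rho]z$ ``disappears only thanks to the precise skew-symmetric structure of $iL_\rho$.'' This does work for the second-order part $i\n(a\,\mathrm{div}\,\cdot)$: since $a$ is real, $\n(a\,\mathrm{div}\,\cdot)$ is self-adjoint, and the commutator with $\La^s$ is a first-order operator with \emph{real} symbol, hence essentially self-adjoint and harmless in the imaginary-part energy balance. But the extended system \eqref{ES} also contains the first-order term $i(\n z)\cdot w$, with $w=\n L$ a real vector field, and this term is \emph{not} tamed by symmetry. The operator $w\cdot\n$ is anti-self-adjoint modulo a bounded multiplication, so the quantity one must control in the Schr\"odinger energy balance is
$$
\mathrm{Im}\int w\cdot\big(\La^s\bar z\;\n\La^s z\big)\,dx,
$$
the flux of the momentum density of $\La^s z$; it involves $s+1$ derivatives of $z$ and cannot be bounded by $\|z\|_{H^s}^2\,\|z\|_{W^{1,\infty}}$. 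This is the classical obstruction for quasilinear Schr\"odinger operators with a real first-order coefficient, and the paper states it verbatim: ``Due to the quasi-linear nature of the system (and in particular the bad non-transport term $iw\cdot\n z$), it is not possible to directly estimate $\|z\|_{H^{2n}}$ by energy estimates, instead one uses a gauge function $\varphi_n(\rho)$.''

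The missing ingredient in your proof is precisely that gauge. One estimates the weighted quantity $\|\varphi_n(\rho)\D^n z\|_{L^2}$ instead of $\|\La^s z\|_{L^2}$, with $\varphi_n(\rho)=a^n(\rho)\sqrt{\rho}$ tuned using the algebraic relations $w=\n L$ and $\n a=a'(\rho)\n\rho$, so that the first-order contributions coming from $w\cdot\n$, from $[\D^n,a]$, and from $\n\varphi_n$ combine into a term of the form $\varphi_n\big(\tfrac{a}{2\rho}+na'\big)\n\rho\cdot(\n-I_d\,\mathrm{div})\D^n z$, which is then eliminated by an integration by parts exploiting its gradient structure (and, in the irrotational case of Section~\ref{secenergie}, irrotationality of $z$). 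Skew-adjointness of $iL_\rho$ alone does not produce this cancellation; the specific choice of weight does, and without it the differential inequality you write down does not close.
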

\noindent We point out that \cite{Benzoni1} includes local well-posedness results for 
nonlocalized initial data (e.g. theorem $6.1$). The authors also obtained several blow-up 
criterions. In the 
irrotational case it reads:
\subparagraph{Blow-up criterion:} \label{blowcriter} for $s>1+d/2$, $(\rho,u)$ solution on 
$[0,T)\times \R^d$ of $(\ref{EK})$, the solution can be continued beyond $T$ provided 
\begin{enumerate}
 \item $\rho([0,T)\times \R^d)\subset J\subset \R^{+*}$, $J$ compact and $K$ is smooth 
 on a neighbourhood of $J$.
 \item $\int_0^T(\|\Delta \rho(t)\|_\infty+\|\text{div}u(t)\|_\infty )dt<\infty$.
\end{enumerate}

These results relied on energy estimates for an extended system that we write now. 
If $\mathcal{L}$ is a 
primitive of $\sqrt{K/\rho}$, setting $L=\mathcal{L}(\rho)$, $w=\sqrt{K/\rho}
\n \rho=\n L$, $a=\sqrt{\rho K(\rho)}$, from basic computations we verify (see \cite{Benzoni1}) that the equations on 
$(L,u,w)$ are
\begin{equation*}
\left\{
\begin{array}{ll}
\p_tL+u\cdot \nabla L+a\text{div}u=0,\\
\partial_tu+u\cdot \nabla u-w\cdot \n w-\n (a\text{div}w)=-\n g,\\
\partial_tw+\n (u\cdot w)+\n (a\text{div}w)=0,
\end{array}
\right.
\end{equation*}
or equivalently for $z=u+iw$
\begin{equation}\label{ES}
\left\{
\begin{array}{ll}
\p_tL+u\cdot \nabla L+a\text{div}u=0,\\
\partial_t z+u\cdot \n z+i(\n z)\cdot w+i\n(a\text{div}z)=\n \widetilde{g}(L).
\end{array}
\right.
\end{equation}
Here we set $\widetilde{a}(L)=a\circ \mathcal{L}^{-1}(L),\ \widetilde{g}(L)=g\circ 
\mathcal{L}^{-1}(L)$ which are well-defined since $\sqrt{K/\rho}>0$ thus $\mathcal{L}$ is 
invertible. \\
This change of unknown clarifies the underlying dispersive structure of the model as the second 
equation is a quasi-linear degenerate Schr\"odinger equation. It should be pointed out 
however that the local existence results of \cite{Benzoni1} relied on $H^s$ energy estimates 
rather than dispersive estimates. On the other hand, we constructed recently 
in \cite{AudHasp} global 
small solutions to $(\ref{EK})$ for $d\geq 3$ when the underlying system is 
\emph{semi-linear}, that is $K(\rho)=\kappa/\rho$ with $\kappa$ a positive constant and for 
$g(\rho)=\rho-1$. This case corresponds to the equations of quantum hydrodynamics. The 
construction relied on the so-called Madelung transform, which establishes a formal 
correspondance between these equations and the Gross-Pitaevskii equation, and recent results 
on scattering for the Gross-Pitaevskii equation \cite{GNT1}\cite{GNT3}. Let us recall for completeness that 
$1+\psi$ is a solution of the Gross-Pitaevskii equation if $\psi$ satisfies 
\begin{equation}\label{GP}
i\partial_t\psi+\Delta \psi-2\text{Re}(\psi)=\psi^2+2|\psi|^2+|\psi|^2\psi.
\end{equation}
For the construction of global weak solutions (no uniqueness, but no smallness assumptions)
we refer also to the work of Antonelli-Marcati \cite{AntMarc2,AntMarc}.\\
In this article we consider perturbations of the constant state $\rho=\rho_c,\ u=0$ for a general 
capillary coefficient $K(\rho)$ that we only suppose smooth and positive on an interval 
containing $\rho_c$. In order to exploit the dispersive nature of 
the equation we need to work with irrotational data $u=\n \phi$ so that $(\ref{ES})$ reduces 
to the following system (where $L_c=\mathcal{L}(\rho_c)$ which has obviously similarities with $(\ref{GP})$ (more details are provided in sections 
$3$ and $4$):
\begin{equation}\label{eqcanonique}
\left\{
\begin{array}{ll}
\partial_t\phi-\Delta (L-L_c)+\widetilde{g}'(L_c)(L-L_c)=\mathcal{N}_1(\phi,L),\\
\partial_t(L-L_c)+\Delta \phi=\mathcal{N}_2(\phi,L)
\end{array}\right.
\end{equation}
The sytem satisfies the dispersion relation 
$\tau^2=|\xi|^2(\widetilde{g}'(L_c)+|\xi|^2)$, and the $\mathcal{N}_j$ are at least quadratic 
nonlinearities that depend on $L,\phi$ and their derivatives (the system is thus quasi-linear). We 
also point out that the stability condition $\widetilde{g}'(L_c)\geq 0$ 
is necessary in order to ensure that the solutions in $\tau$ of the dispersion relation are real.\\
The existence of global small solutions for nonlinear dispersive equations is a rather classical 
topic which is impossible by far to describe exhaustively in this introduction. We shall 
yet underline the main ideas that are important for our work here.
\paragraph{Dispersive estimates}
For the Schr\"odinger equation, two key tools are the dispersive estimate
\begin{equation}
 \|e^{it\Delta}\psi_0\|_{L^q(\R^d)}\lesssim\frac{\|\psi_0\|_{L^2}}{t^{d(1/2-1/q)}},
 \label{adisp1}
\end{equation}
and the Strichartz estimates 
\begin{eqnarray}
 \|e^{it\Delta}\psi_0\|_{L^p(\R,L^q(\R^d))}\lesssim \|\psi_0\|_{L^2},\ \frac{2}{p}+\frac{d}{q}
 =\frac{d}{2},\\
 \|\int_0^te^{i(t-s)\Delta}f(s)ds\|_{L^p(\R,L^q(\R^d))}\lesssim \|f\|_{L^{p'_1}(\R,L^{q'_1}(\R^d)},
 \
\frac{2}{p_1}+\frac{d}{q_1}=\frac{d}{2}.
 \end{eqnarray}
Both indicate decay of the solution for long time in $L^p(L^q)$ spaces, it is of course of interest when we wish to prove the existence of global strong solution since it generally require some damping behavior for long time.  Due to the pressure term the linear structure of our system is actually closer to the one of the 
Gross-Pitaevskii equation (see (\ref{GP})), but the estimates are essentially the same as 
for the Schr\"odinger 
equation. Local smoothing is also an interesting feature of Schr\"odinger equations, in particular 
for the study of quasilinear systems. A result in this direction was obtained by the first author 
in \cite{Audiard4} but we will not need it here. The main task of our proof will consist in proving dispersive estimates 
of the type (\ref{adisp1}) for long time, it is related to the notion of scattering for the solution of the 
dispersive equations. Let us recall now some classical result on the theory of the scattering for the Schr\" odinger 
equations and the Gross Pitaevskii equation.
\paragraph{Scattering} Let us consider the following nonlinear Schr\"odinger 
equation 
\begin{equation*}
i\p_t \psi+\Delta \psi=\mathcal{N}(\psi).
\end{equation*}
Due to the dispersion, when the nonlinearity vanishes at a sufficient order 
at $0$ and the initial data is sufficiently small and localized, it is possible to prove that the 
solution is global and 
the integral $\int e^{-is\Delta}\mathcal{N}(\psi(s))ds$ converges in $L^2(\R^d)$, so that there 
exists $\psi_+\in L^2(\R^d)$ such that 
\begin{equation*}
\|\psi(t)-e^{it\Delta}\psi_+\|_{L^2}\longrightarrow_{t\rightarrow \infty}0.
\end{equation*}
In this case, it is said that the solution is asymptotically linear, or \emph{scatters} to $\psi_+$.\\
In the case where $\mathcal{N}$ is a general power-like non-linearity, we can cite the seminal work
of Strauss \cite{Strauss}. More precisely if $\mathcal{N}(a)=O_0(|a|^p)$, 
global well-posedness for small data in $H^1$ is merely a consequence of Strichartz estimates provided $p$ is 
larger than the so-called Strauss exponent 
\begin{equation}\label{expSL}
p_S(d)=\frac{\sqrt{d^2+12d+4}+d+2}{2d}. 
\end{equation}
For example scattering for quadratic nonlinearities (indepently of their structure 
$\phi^2, \overline{\phi}^2,|\phi|^2$...) can be obtained for $d\geq 4$, indeed $p_S(3)=2$. 
The case $p\leq p_S$ is much harder and is discussed later.

\paragraph{Mixing energy estimates and dispersive estimates}
If $\mathcal{N}$ depends on derivatives of $\phi$, due to the loss of derivatives the situation is 
quite different and it is important to take more precisely  into account  the structure of the system. In particular it is possible in some case to exhibit energy estimates which often lead after a Gronwall lemma to the following situation: 
\begin{equation*}
\forall\,N\in \mathbb{N},\ 
\|\phi(t)\|_{H^N}\leq \|\phi_0\|_{H^N}\text{exp}\bigg(C_N\int_0^t\|\phi(s)\|_{W^{k,\infty}}^{p-1}
ds\bigg),\ k\text{ ``small'' and independent on $N$}.
\end{equation*}
A natural idea consists in mixing energy estimates in the $H^N$ norm, $N$ ``large'', with dispersive 
estimates : if one obtains 
\begin{equation*}
\bigg\|\int_0^t e^{i(t-s)\Delta}\mathcal{N}ds\bigg\|_{W^{k,\infty}}\lesssim 
\frac{\|\psi\|_{H^N\cap W^{k,\infty}}^p}{t^\alpha},\ \alpha(p-1)>1,
\end{equation*}
then setting $\|\psi\|_{X_T}=\sup_{[0,T]}\|\psi(t)\|_{H^N}+t^\alpha\|\psi(t)\|_{W^{k,\infty}}$ 
the energy estimate yields for small data
\begin{equation*}
\|\psi\|_{X_T}\lesssim \|\psi_0\|_{H^N}\text{exp}(C\|\psi\|_X^{p-1})+
\|\psi\|_{X_T}^p+\varepsilon,
\end{equation*}
so that $\|\psi\|_{X_T}$ must remain small uniformly in $T$.
This strategy seems to have 
been initiated independently by Klainerman and Ponce \cite{KlaiPonce} and Shatah \cite{Shatah}.
If the energy estimate is true, this method works ``straightforwardly'' and gives global 
well-posedness for small initial data (this is the approach from 
section $4$) if
\begin{equation}\label{expQL}
p>\widetilde{p}(d)=\frac{\sqrt{2d+1}+d+1}{d}>p_S(d).
\end{equation}
Again, there is a critical dimension: $\widetilde{p}(4)=2$, thus any quadratic nonlinearity 
can be handled with this method if $d\geq 5$.
\paragraph{Normal forms, space-time resonances} When $p\leq p_S$ (semi-linear case) or 
$\widetilde{p}$ (quasi-linear case), the 
strategies above can not be directly applied, and one has to look more closely at the structure 
of the nonlinearity. For the Schr\"odinger equation, one of the earliest result in this direction 
was due to Cohn \cite{Cohn} who proved (extending Shatah's method of normal forms \cite{Shatah2}) 
the global well-posedness in dimension $2$ of 
\begin{equation}\label{eqcohn}
 \begin{array}{ll}
i\p_t \psi+\Delta \psi=i\n \overline{\psi}\cdot \n\overline{\psi}.
 \end{array}
\end{equation}
The by now standard strategy of proof was to use a normal form that transformed the quadratic 
nonlinearity 
into a cubic one, and since $3>\widetilde{p}(2)\simeq 2.6$ the new equation could be treated with the arguments from 
\cite{KlaiPonce}.
In dimension $3$, similar results (with very different proofs using vector fields method 
and time non resonance) were then obtained for the 
nonlinearities $\psi^2$ and $\overline{\psi}^2$ by Hayashi, 
Nakao and Naumkin \cite{HaNau} (it is important to observe that the quadratic nonlinearity is critical in terms of Strauss exponent for the semi-linear case when $d=3$). The existence of global solutions for the nonlinearity $|\psi|^2$ 
is however still open (indeed it corresponds to a nonlinearity where the set of time and space non resonance is not empty, we will give more explanations below on this phenomenon) .\\
More recently, Germain-Masmoudi-Shatah \cite{GMS}\cite{GMS2}\cite{GMS2D} and 
Gustafson-Nakanishi-Tsai \cite{GNT2}\cite{GNT3} shed a new light on such 
issues with the concept of space-time resonances. To describe it, let us rewrite the Duhamel 
formula for the profile of the solution $f=e^{-it\Delta}\psi$, in the case \eqref{eqcohn}:
\begin{equation}\label{duhaprofile}
f=\psi_0+\int_0^te^{-is\Delta}\mathcal{N}(e^{is\Delta }\psi)ds
\Leftrightarrow 
\widehat{f}=\widehat{\psi_0}+\int_0^t\int_{\R^d}e^{is(|\xi|^2+|\eta|^2+|\xi-\eta|^2)}
\eta\cdot (\xi-\eta)\widehat{\overline{f}}(\eta)\widehat{\overline{f}}(\xi-\eta)d\eta ds
\end{equation}
In order to take advantage of the non cancellation of $\Omega(\xi,\eta)=
|\xi|^2+|\eta|^2+|\xi-\eta|^2$ one might 
integrate by part in time, and from the identity $\partial_tf=-ie^{-it\Delta}\mathcal{N}(\psi)$,
we see that this procedure effectively replaces the quadratic nonlinearity by a cubic one, ie acts 
as a normal form.\\
On the other hand, if $\mathcal{N}(\psi)=\psi^2$ the phase becomes $\Omega(\xi,\eta)=
|\xi|^2-|\eta|^2-|\xi-\eta|^2$, which cancels on a large set, namely the ``time resonant set''
\begin{equation}\label{timeR}
 \mathcal{T}=\{(\xi,\eta):\ \Omega(\xi,\eta)=0\}=\{\eta\perp\ \xi-\eta\}.
\end{equation}
The remedy is to use an integration by part in the $\eta$ variable using
$e^{is\Omega}=\frac{\nabla_{\eta}\Omega}{is |\nabla_\eta\Omega|^2}\nabla_\eta(e^{is\Omega})$, it
does not improve the nonlinearity, however we can observe a gain of time decay in $1/s$. This 
justifies to define the ``space resonant set'' as 
\begin{equation}\label{spaceR}
 \mathcal{S}=\{(\xi,\eta):\ \nabla_\eta\Omega(\xi,\eta)=0\}=\{\eta=-\xi-\eta\},
\end{equation}
as well as the space-time resonant set 
\begin{equation}\label{totalR}
\mathcal{R}=\mathcal{S}\cap \mathcal{T}=\{(\xi,\eta):\ \Omega(\xi,\eta)=0,\ \nabla_\eta\Omega
(\xi,\eta)=0\}.
\end{equation}
For $\mathcal{N}(\psi)=\psi^2$, we simply have $\mathcal{R}=\{\xi=\eta=0\}$; using the previous strategy
Germain et al \cite{GMS} obtained global well-posedness for the quadratic Schr\"odinger equation. \\
Finally, for $\mathcal{N}(\psi)=|\psi|^2$ similar computations lead to $\mathcal{R}=\{\xi=0\}$, 
the ``large'' size of this set might explain why this nonlinearity is particularly difficult to 
handle.
\paragraph{Smooth and non smooth multipliers} The method of space-time resonances in the case 
$(\n \overline{\phi})^2$ is particularly simple because after the time integration by part, the 
Fourier transform of the nonlinearity simply becomes 
$$
\frac{\eta\cdot (\xi-\eta)}{|\xi|^2+|\eta|^2+|\xi-\eta|^2}\p_s\widehat{\overline{\psi}}(\eta)
\widehat{\overline{\psi}}(\xi-\eta),
$$
where the multiplier $\frac{\eta\cdot (\xi-\eta)}{|\xi|^2+|\eta|^2+|\xi-\eta|^2}$ is of 
Coifman-Meyer type, thus in term of product laws it is just a cubic nonlinearity. We might naively 
observe that this is due to the fact that 
$\eta\cdot (\xi-\eta)$ cancels on the resonant set $\xi=\eta=0$. Thus one might wonder what 
happens in the general case if the nonlinearity writes as a bilinear Fourier multiplier whose symbol 
cancels on $\mathcal{R}$. In \cite{GMS2D}, the authors treated the nonlinear Schr\"odinger 
equation for $d=2$ by assuming that the nonlinearity 
is of type $B[\psi,\psi]$ or $B[\overline{\psi},\overline{\psi}]$, with $B$ a bilinear Fourier 
multiplier whose symbol is linear at $|(\xi,\eta)|\leq 1$ (and thus cancels on $\mathcal{R}$). 
Concerning the Gross-Pitaevskii equation (\ref{GP}), the nonlinear terms include the worst one 
$|\psi|^2$ but Gustafson et al \cite{GNT3} 
managed to prove global existence and scattering in dimension $3$, one of the important ideas of 
their proof was a change of unknown $\psi\mapsto Z$ (or normal form) that replaced the nonlinearity 
$|\psi|^2$ by $\sqrt{-\Delta/(2-\Delta)}|Z|^2$ which compensates the resonances at $\xi=0$.
To some extent, this is also a strategy that we will follow here.\\
Finally, let us point out that the method of space-time resonances proved remarkably efficient for 
the water wave equation \cite{GMS2} partially because the group velocity 
$|\xi|^{-1/2}/2$ is large near $\xi=0$, while it might not be the 
most suited for the Schr\"odinger equation whose group velocity $2\xi$ cancels at $\xi=0$. The 
method of vector fields is an interesting 
alternative, and this approach was later chosen by Germain et al in \cite{GMScap} to study the capillary water 
waves (in this case the group velocity is $3|\xi|^{1/2}/2$). Nevertheless, in our case the term 
$\widetilde{g}(L_c)$ in $(\ref{eqcanonique})$ induces a lack of symetry which seems to limit the 
effectiveness of this approach.
\vspace{2mm}
\paragraph{Plan of the article} In section $2$ we introduce the notations and state 
our main results. Section $3$ is devoted to the reformulation of $(\ref{EK})$ as a non degenerate 
Schr\"odinger equation, and we derive the energy estimates in ``high''Sobolev spaces. We use a 
modified energy compared with \cite{Benzoni1} in order to avoid some time growth of the norms. 
In section $4$ we prove our main result in dimension at least $5$. Section $5$ begins the analysis of
dimensions $3$ and $4$, which is the heart of the paper. We only detail the case $d=3$ since $d=4$ follows 
the same ideas with simpler computations. We first introduce the functional settings, a normal form and check that it 
defines an invertible change of variable in these settings, then we bound the high order terms (at least cubic).
In section $6$ we use the method of space-time resonances (similarly to \cite{GNT3}) to bound quadratic terms and close 
the proof of global well-posedness in dimension $3$. The appendix provides some technical 
multipliers estimtes required for section $6$.

\section{Main results, tools and notations}
\label{results}
\paragraph{The results}As pointed out in the introduction, we need a condition on 
the pressure. 
\vspace{2mm}
\begin{assumption}\label{stabassump}
Throughout all the paper, we work near a constant state 
$\rho=\rho_c>0,\ u=0$, with $g'(\rho_c)>0$. 
\end{assumption}
\noindent In the case of the Euler equation, this standard condition implies 
that the linearized system 
\begin{equation*}
\left\{
\begin{array}{ll}
\partial_t\rho+\rho_c\text{div} u=0,\\
\partial_tu+g'(\rho_c)\nabla \rho=0.
\end{array}\right.
\end{equation*}
is hyperbolic, with eigenvalues (sound speed) $\pm\sqrt{\rho_c g'(\rho_c)}$.
\begin{theo}\label{theolarged}
Let $d\geq 5$, $\rho_c\in \R^{+*}$, $u_0=\n \phi_0$ be irrotational. For 
$(n,k)\in \mathbb{N},\ k>2+d/4,\ 2n+1\geq k+2+d/2$, there exists $\delta>0,$ such that if
\begin{equation*}
\|u_0\|_{H^{2n}\cap W^{k-1,4/3}}+\|\rho_0-\rho_c\|_{H^{2n+1}\cap W^{k,4/3}}\leq \delta
\end{equation*}
 then the unique solution of $(\ref{EK})$ is global with 
$\|\rho-\rho_c\|_{L^\infty(\R^+\times \R^d)}\leq \frac{\rho_c}{2}$.
\end{theo}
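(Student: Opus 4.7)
The plan is to rewrite \eqref{EK} as the Schr\"odinger-type system \eqref{eqcanonique} in the unknowns $(\phi,L-L_c)$, establish a modified high-Sobolev energy estimate that avoids polynomial-in-$t$ growth, and close a bootstrap argument using the dispersive decay of the associated linear semigroup. The range $d\geq 5$ is precisely where the quasilinear Strauss-type exponent $\widetilde p(d)$ of \eqref{expQL} is strictly below $2$, so the quadratic nonlinearities $\mathcal N_1,\mathcal N_2$ can be absorbed without any resonance analysis. Concretely, the linearization of \eqref{eqcanonique} is generated by the positive self-adjoint multiplier $H:=\sqrt{-\Delta\,(\widetilde g'(L_c)-\Delta)}$, whose symbol behaves like $\sqrt{\widetilde g'(L_c)}\,|\xi|$ at the origin (wave-like) and like $|\xi|^2$ at infinity (Schr\"odinger-like), Assumption~\ref{stabassump} guaranteeing strict positivity off the origin. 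A standard stationary-phase argument applied to a Littlewood--Paley decomposition separating these two regimes yields
\[
\|e^{\pm itH}f\|_{L^4(\R^d)}\lesssim \langle t\rangle^{-d/4}\|f\|_{W^{s_0,4/3}(\R^d)}
\]
for some fixed $s_0=s_0(d)$, the mild loss of derivatives on the right reflecting the wave singularity of the symbol at $\xi=0$.

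The main technical step is to construct a \emph{modified} energy functional
\[
\widetilde E_N(t)\simeq \|L-L_c\|_{H^{2n+1}}^2+\|u\|_{H^{2n}}^2,
\]
coercive as long as $\|\rho-\rho_c\|_\infty\ll 1$, and satisfying
\[
\frac{d}{dt}\widetilde E_N(t)\lesssim \bigl(\|L-L_c\|_{W^{k,\infty}}+\|u\|_{W^{k-1,\infty}}\bigr)\,\widetilde E_N(t).
\]
The point, compared with the natural energy of \cite{Benzoni1}, is that the correction must be tuned so that the top-order commutators produced by the quasilinear transport $u\cdot\nabla z$ and the ``magnetic'' term $i(\nabla z)\cdot w$ in \eqref{ES} are absorbed into a conserved quadratic form, leaving a right-hand side involving only low-order $L^\infty$-type norms with no polynomial-in-$t$ factor. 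I expect this to be the principal obstacle: one must exhibit a symmetrizer that is simultaneously coercive in the target norm and whose time derivative produces no uncancelled top-order contribution, which is the whole reason for departing from the energy of \cite{Benzoni1}.

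For the bootstrap, set
\[
X_T:=\sup_{t\in[0,T]}\Bigl(\widetilde E_N(t)^{1/2}+\langle t\rangle^{d/4}\|(L-L_c,u)(t)\|_{W^{k,4}}\Bigr).
\]
The Sobolev embedding $W^{k,4}\hookrightarrow W^{k',\infty}$ with $k'=k-d/4-\eta$ (requiring $k>2+d/4$, exactly as in the hypotheses) turns the $L^4$ control into the $W^{k-1,\infty}$ norm appearing on the right of the energy estimate. Gr\"onwall's lemma, combined with $\int_0^\infty \langle s\rangle^{-d/4}\,ds<\infty$ for $d\geq 5$, then gives $\widetilde E_N(t)\lesssim \delta^2\exp(CX_T)$. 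On the other hand, Duhamel's formula for \eqref{eqcanonique} together with the dispersive estimate above yields, for the $L^4$ component of $X_T$,
\[
\langle t\rangle^{d/4}\|(L-L_c,u)(t)\|_{W^{k,4}}\lesssim \delta+X_T^2\;\sup_{t\geq 0}\langle t\rangle^{d/4}\int_0^t\frac{ds}{\langle t-s\rangle^{d/4}\langle s\rangle^{d/2}},
\]
and the supremum is finite as soon as both $d/4>1$ and $d/2>1$, i.e.\ again $d\geq 5$. The condition $2n+1\geq k+2+d/2$ provides exactly enough Sobolev regularity to perform the Moser/H\"older product estimates on the quadratic nonlinearity without spoiling the dispersive decay. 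Altogether $X_T\lesssim \delta+X_T^2$, which closes the bootstrap for $\delta$ sufficiently small; the blow-up criterion recalled after the local well-posedness statement then upgrades the a~priori bound into a global solution, and the pointwise bound $\|\rho-\rho_c\|_\infty\leq \rho_c/2$ follows from the Sobolev embedding $H^{2n+1}\hookrightarrow L^\infty$ applied to the smallness of $\widetilde E_N$.
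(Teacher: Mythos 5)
Your proposal follows the same overall strategy as the paper's proof: reformulate the system around the diagonalized generator $H=\sqrt{-\Delta(\widetilde g'(1)-\Delta)}$ (via $\phi_1=U\phi$ with $U=\sqrt{-\Delta/(\widetilde g'(1)-\Delta)}$), prove a modified high-Sobolev energy estimate whose integrand carries no additive constant, and close a bootstrap in $L^\infty H^{2n+1}\cap \langle t\rangle^{-d/4}W^{k,4}$; the thresholds $k>2+d/4$ (Sobolev embedding $W^{k,4}\hookrightarrow W^{2,\infty}$) and $2n+1\ge k+2+d/2$ (product estimates in $H^{k+d/2}$) arise exactly as you say.

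The genuine gap is the modified energy. You correctly call it the principal obstacle, but ``exhibit a symmetrizer whose time derivative produces no uncancelled top-order contribution'' is a wish, not a construction. The paper's argument has two distinct ideas here. First, as in \cite{Benzoni1}, the gauge $\varphi_n(\rho)=a^n(\rho)\sqrt{\rho}$ is chosen so that the top-order remainders coming from $iw\cdot\nabla z$, the transport of the gauge, and $\nabla(a\,\mathrm{div}\,z)$ recombine into a real gradient contracted against $(\nabla-I_d\,\mathrm{div})\Delta^n z$, whose imaginary part vanishes identically for irrotational $z$; this removes the troublesome cubic commutators. But a \emph{quadratic} term $-2\int\varphi_n^2\Delta^n w\,\Delta^n u\,dx$ survives, and this is precisely what would force the ``$+\text{const}$'' in the integrand of \cite{Benzoni1}. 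The second idea — which is where the paper departs from the reference and which your sketch does not supply — is to cancel this quadratic term by pairing the $z$-energy identity with a suitably weighted $H^{2n}$ identity for $l$ obtained from the continuity equation: adding $(\ref{avantdernier})$ to twice $(\ref{suprquad})$ makes the two quadratic contributions cancel exactly, leaving a right-hand side bounded purely by $(\|\nabla\phi\|_{W^{1,\infty}}+\|l\|_{W^{2,\infty}})\times(\text{energy})$, which is then integrable thanks to the $\langle t\rangle^{-d/4}$ decay. Without this pairing, Gr\"onwall gives at best $\widetilde E_N(t)\lesssim\widetilde E_N(0)e^{Ct}$ and the bootstrap fails.

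Two smaller inaccuracies, neither fatal: (i) the dispersive estimate of \cite{GNT1,GNT3} carries a low-frequency \emph{gain} $U^{(d-2)(1/2-1/p)}$, not a derivative loss, because $H(\xi)\sim\sqrt{\widetilde g'(1)}\,|\xi|$ near $\xi=0$ has strong angular curvature; (ii) the nonlinearity decays like $\langle s\rangle^{-d/4}$, not $\langle s\rangle^{-d/2}$, since the $W^{k,4/3}$ dual estimate is obtained by H\"older pairing one $L^4$ factor (which decays) with one $L^2$-type energy factor (which does not). The threshold $d\ge 5$ is unchanged, because the binding condition in $\langle t\rangle^{d/4}\int_0^t\langle t-s\rangle^{-d/4}\langle s\rangle^{-d/4}\,ds\lesssim 1$ remains $d/4>1$.
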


\begin{theo}\label{theod4}
Let $d=3$ or $4$, $u=\n \phi_0$ irrotational, $k > 2+d/4$, there exists 
$\delta>0$, $\varepsilon>0$, small enough, $n\in \mathbb{N}$ large enough, such that for
$\displaystyle \frac{1}{p}=\frac{1}{2}-\frac{1}{d}-\varepsilon$, if 
\begin{equation*}
\|u_0\|_{H^{2n}}+\|\rho_0-\rho_c\|_{H^{2n+1}}+\|xu_0\|_{L^2}+\|x(\rho_{0}-\rho_c)\|_{L^2}
+\|u_0\|_{W^{k-1,p'}}+\|\rho_{0}-\rho_c\|_{W^{k,p'}}\leq \delta,
\end{equation*}
then the solution of $(\ref{EK})$ is global with 
$\|\rho-\rho_c\|_{L^\infty(\R^+\times \R^d)}\leq\frac{\rho_c}{2}$.
\end{theo}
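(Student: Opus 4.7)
The plan is to continue the program sketched in the introduction: diagonalize the linear part of (\ref{eqcanonique}), apply a Gustafson--Nakanishi--Tsai type normal form to tame the worst low frequency resonance, and close a bootstrap on a working space combining a high Sobolev norm, a weighted $L^2$ norm for the profile (playing the role of the Galilean operator $x-t\nabla\omega(D)$), and a time--decaying dispersive norm whose integrability over $\R^+$ is precisely ensured by the choice $1/p=1/2-1/d-\varepsilon$. Introducing $\Lambda=\sqrt{\widetilde g'(L_c)-\Delta}$ and $\psi=\Lambda(\phi)+i(L-L_c)$ recasts (\ref{eqcanonique}) as a scalar equation $i\partial_t\psi-\omega(D)\psi=\mathcal{Q}[\psi,\overline\psi]+\mathcal{C}[\psi,\overline\psi,\ldots]$ with phase $\omega(\xi)=|\xi|\sqrt{\widetilde g'(L_c)+|\xi|^2}$, linear in $|\xi|$ near the origin and quadratic at infinity. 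As in \cite{GNT3}, the $|\psi|^2$--like component of $\mathcal{Q}$ does not vanish on the space resonance set and forces a nonlinear change of unknown $\psi\mapsto Z=\psi+\mathcal{B}[\psi,\overline\psi]$; the invertibility of this transformation in the chosen functional setting (section~5 of the excerpt) is the first technical step.

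Define $X_T$ by
\begin{equation*}
\|\psi\|_{X_T}=\sup_{t\in[0,T]}\Bigl(\|\psi(t)\|_{H^{2n}}+\|xf(t)\|_{L^2}+\langle t\rangle^{1+d\varepsilon}\|\psi(t)\|_{W^{k,p}}\Bigr),\qquad f(t)=e^{it\omega(D)}\psi(t).
\end{equation*}
The $H^{2n}$ part is controlled by the modified energy of section~3 which, via Grönwall, gives
\begin{equation*}
\|\psi(t)\|_{H^{2n}}\lesssim\|\psi(0)\|_{H^{2n}}\exp\Bigl(C\int_0^t\|\psi(s)\|_{W^{k,\infty}}\,ds\Bigr),
\end{equation*}
so the assumption $k>2+d/4$, Sobolev embedding $W^{k,p}\hookrightarrow W^{k',\infty}$ for $k'=k-d/p$, and the decay $\langle t\rangle^{-1-d\varepsilon}$ make the integral finite. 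The $L^p$ part is controlled linearly by the dispersive estimate $\|e^{-it\omega(D)}g\|_{L^p}\lesssim t^{-d(1/2-1/p)}\bigl(\|g\|_{W^{k,p'}}+\|xg\|_{L^2}\bigr)$ (the Hessian of $\omega$ is non--degenerate in each frequency regime), reducing everything to bounds on $\|xf\|_{L^2}$ and on the Duhamel contribution to $\|f\|_{W^{k,p'}}$.

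The hard part is the space--time resonance analysis of the quadratic Duhamel integrals
\begin{equation*}
\int_0^t\!\!\int_{\R^d}e^{is\,\Omega_{\epsilon_1\epsilon_2}(\xi,\eta)}\,m_{\epsilon_1\epsilon_2}(\xi,\eta)\,\widehat{f_{\epsilon_1}}(s,\eta)\,\widehat{f_{\epsilon_2}}(s,\xi-\eta)\,d\eta\,ds,
\end{equation*}
with $\Omega_{\epsilon_1\epsilon_2}(\xi,\eta)=\omega(\xi)-\epsilon_1\omega(\eta)-\epsilon_2\omega(\xi-\eta)$, $\epsilon_j\in\{+,-\}$. The configurations $(+,+)$ and $(-,-)$ satisfy $|\Omega|\gtrsim 1$ (the mass prevents cancellation) and are removed by a normal form in $s$, which turns them into cubic terms integrable thanks to the quasilinear Strauss threshold \eqref{expQL}. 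The genuinely dangerous case is $(+,-)$ (and by symmetry $(-,+)$), whose time resonance set passes through $\xi=0$; it is precisely there that the normal form built in section~5 makes the reduced bilinear symbol vanish at the right order, so that combining integration by parts in $\eta$ on the complement of $\mathcal{S}$ with integration by parts in $s$ on the complement of $\mathcal{T}$ produces the missing $1/s$ factor. The control of $\|xf\|_{L^2}$ is the most delicate piece, because $x$ differentiates the bilinear symbol in $\eta$, generating singular multipliers for which the tailor--made multiplier estimates of the appendix are required. Once both halves of the bootstrap yield $\|\psi\|_{X_T}\lesssim\delta+\|\psi\|_{X_T}^2$, a standard continuity argument produces the global solution for both $d=3$ (detailed) and $d=4$ (simpler, decay being stronger).
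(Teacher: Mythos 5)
Your proposal follows the same overall roadmap as the paper (diagonalization to a Gross--Pitaevskii--type equation, GNT normal form, a bootstrap norm mixing $H^N$, $\|xf\|_{L^2}$, and a time-weighted $W^{k,p}$ norm, and space--time resonance analysis), and the numerology of the decay weight $\langle t\rangle^{1+d\varepsilon}$ with $1/p=1/2-1/d-\varepsilon$ matches the paper's choice for $d=3$. However, there is a concrete error in the resonance analysis.

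You claim that the $(+,+)$ and $(-,-)$ interactions satisfy $|\Omega|\gtrsim 1$ ``because the mass prevents cancellation.'' This is false: the dispersion relation here is $H(\xi)=|\xi|\sqrt{\widetilde g'(L_c)+|\xi|^2}$, which vanishes at $\xi=0$ --- this is an acoustic (wave-like) dispersion at low frequency, not a Klein--Gordon one, and there is no mass gap. In the paper's own classification (beginning of section~6), the time-resonant set for $\Omega_{++}$ is $\{\xi=\eta=0\}$, not empty, and the space--time resonant set of $\Omega_{--}$ is also $\{\xi=\eta=0\}$. Consequently, integrating by parts in $s$ in these configurations still produces the singular factor $1/\Omega$ near the origin, and controlling it requires the same dyadic localization and rough multiplier estimates (lemmas~\ref{bacrucial1},\ref{bacrucial} and theorem~\ref{singmult}) that are needed for $(+,-)$; see the appendix subsection ``The other cases,'' where the $(-,-)$ analysis for $M<1$ is explicitly reduced to the $Z\overline Z$-type machinery, not to a trivial normal form. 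In your argument this step is simply skipped: having assumed $|\Omega|\gtrsim 1$, you would divide by a uniformly bounded quantity and obtain a clean cubic term, whereas in fact the quotient $1/\Omega_{\pm\pm}$ degenerates, its derivatives in $\eta$ are worse than Coifman--Meyer, and the gain in $U(\xi)$ afforded by the normal form is essential precisely to compensate this degeneracy. Without recognizing this, the bound on the weighted norm $\|xf\|_{L^2}$ for the $(+,+)$ and $(-,-)$ pieces would not close. This is a genuine gap, not just an imprecision.

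As a secondary remark, your definition of the diagonalizing operator ($\Lambda=\sqrt{\widetilde g'(L_c)-\Delta}$ acting on $\phi$) does not match the paper's $U=\sqrt{-\Delta/(\widetilde g'(L_c)-\Delta)}$, and the latter is the correct choice that makes the system unitary-equivalent to $i\partial_t\psi=H\psi$; the symbol $U(\xi)$ vanishing at the origin is also exactly what supplies the needed zero of the quadratic symbol on the resonance set $\{\xi=0\}$ after the normal form, so this is not a cosmetic point.
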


\begin{rmq}
While the proof implies to work with the velocity potential, we only need assumptions on the 
physical variables velocity and density.
\end{rmq}
\begin{rmq}
Actually we prove a stronger result: in the appropriate variables the solution scatters. 
Let $\mathcal{L}$ be the primitive of $\sqrt{K/\rho}$ such that $\mathcal{L}(\rho_c)=1$,
$L=\mathcal{L}(\rho)$,
$\mathcal{H}=\sqrt{-\Delta(\widetilde{g}'(1)-\Delta)}$, $\mathcal{U}
=\sqrt{-\Delta/(\widetilde{g}'(1)-\Delta)}$ ,
$\displaystyle f=e^{-it\mathcal{H}}(\mathcal{U}\phi+iL)$, then there exists 
$f_\infty$ such that 
$$\forall\,s<2n+1,\ \|f(t)-f_\infty\|_{H^{s}\cap L^2/\la x\ra}
\rightarrow_{t\rightarrow \infty} 0.$$
The analogous result is true in dimension $\geq 5$ with $t^{-d/2+1}$ for the convergence rate 
in $L^2$. See section \ref{secscatt} for a discussion in dimension $3$. It is also possible 
to quantify how large $n$ should be (at least of order  $20$, see remark \ref{precN}).
In both theorems, the size of $k$ and $n$ can be slightly 
decreased by working in fractional Sobolev spaces, but since it would remain quite large we 
chose to avoid these technicalities.
\end{rmq}

\paragraph{Some tools and notations} Most of our tools are standard analysis, except a singular 
multiplier estimate. 

\subparagraph{Functional spaces}
The usual Lebesgue spaces are $L^p$ with norm $\|\cdot\|_p$, the Lorentz spaces are $L^{p,q}$.
If $\R^+$ corresponds to the time variable, and for $B$ a Banach space, we write for short 
$L^p(\R^+,B)=L^p_tB$, similarly $L^p([0,T],B)=L^p_TB$.\\
The Sobolev spaces 
are $W^{k,p}=\{u\in L^p:\ \forall\,|\alpha|\leq k,\ D^\alpha u\in L^p\}$. 
We also use homogeneous spaces $\dot{W}^{k,p}=\{u\in L^1_{loc}:\ \forall\,|\alpha|=k,\ 
D^\alpha u\in L^p\}$. We recall the Sobolev embedding 
\begin{equation*}
\forall\,kp<d,\ \dot{W}^{k,p}(\R^d)\hookrightarrow L^{q,p}\hookrightarrow L^q,\ q=\frac{dp}{d-kq},
\ \forall\,kp>d,\ W^{k,p}(\R^d)\hookrightarrow L^\infty.
\end{equation*}
If $p=2$, as usual $W^{k,2}=H^k$, for which we have equivalent norm $\int_{\R^d}(1+|\xi|^2)^k|
\widehat{u}|^2d\xi$, we define in the usual way $H^s$ for $s\in \R$ and $\dot{H}^s$ for which 
the embeddings remain true. The following dual estimate will be of particular use
\begin{equation*}
\forall\,d\geq 3,\ \|u\|_{\dot{H}^{-1}}\lesssim \|u\|_{L^{2d/(d+2)}}.
\end{equation*}
We will use the following Gagliardo-Nirenberg type inequality (see for example \cite{TaylorIII})
\begin{equation}\label{GN1}
\forall\, l\leq p\leq k-1\text{ integers},\ 
\|D^lu\|_{L^{2k/p}}\lesssim \|u\|_{L^{2k/(p-l)}}^{(k-p)/(k+l-p)}
\|D^{k+l-p}u\|_{L^2}^{l/(k+l-p)}.
\end{equation}
and its consequence 
\begin{equation}\label{GN2}
\forall\,|\alpha|+|\beta|=k,\ \|D^\alpha fD^\beta g\|_{L^2}\lesssim \|f\|_\infty\|g\|_{\dot{H}^k}
+\|f\|_{\dot{H}^k}\|g\|_\infty.
\end{equation}
Finally, we have the basic composition estimate (see \cite{BCD}): for $F$ smooth, 
$F(0)=0$, $u\in L^\infty\cap W^{k,p}$ then\footnote{$k\in \R^+$ is allowed, but not needed.} 
\begin{equation}\label{compo}
\|F(v)\|_{W^{k,p}}\lesssim C(k,\|u\|_\infty))\|u\|_{W^{k,p}}.
\end{equation}

\subparagraph{Non standard notations}
Since we will often estimate indistinctly $z$ or $\overline{z}$, we follow the notations 
introduced in \cite{GNT3}: $z^+=z,\ z^-=\overline{z}$, and $z^\pm$ is a placeholder for $z$ or 
$\overline{z}$. The Fourier transform of $z$ is as usual $\widehat{z}$, however we also need to 
consider the profile $e^{-itH}z$, whose Fourier transform will be denoted 
$\widetilde{z^\pm}:=e^{\mp itH}\widehat{z^\pm}$.\\
When there is no ambiguity, we write $W^{k,\frac{1}{p}}$ (or $L^{\frac{1}{p}}$) instead of $W^{k,p}$ (or 
$L^{p}$) since it is convenient to use H\"older's inequality.

\subparagraph{Multiplier theorems} 
We remind that the Riesz multiplier $\nabla/|\nabla|$ is bounded on $L^p$, $1<p<\infty$.
A bilinear Fourier multiplier is defined by its symbol $B(\eta,\xi)$, it acts on $(f,g)\in 
\mathcal{S}(\R^d)$
\begin{equation*}
\widehat{B[f,g]}(\xi)=\int_{\R^d}B(\eta,\xi-\eta)\widehat{f}(\eta)\widehat{g}(\xi-\eta)d\eta.
\end{equation*}

\begin{theo}[Coifman-Meyer]
If $\partial_\xi^\alpha\partial_\eta^\beta B(\xi,\eta)\lesssim (|\xi|+|\eta|)
^{-|\alpha|-|\beta|}$, 
for sufficiently many $\alpha, \beta$ then for any $1<p,q\leq \infty$, $1/r=1/p+1/q$,
\begin{equation*}
\|B(f,g)\|_r\lesssim \|f\|_p\|g\|_q.
\end{equation*}
If moreover $\text{supp}(B(\eta,\xi-\eta))\subset \{|\eta|\gtrsim |\xi-\eta|\}$, $(p,q,r)$
are finite and $k\in \mathbb{N}$ then 
\begin{equation*}
\|\nabla^kB(f,g)\|_r\lesssim \|\nabla^kf\|_p\|g\|_q.
\end{equation*}
\end{theo}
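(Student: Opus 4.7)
The plan is to treat this as the classical Coifman--Meyer multiplier theorem, proved via a dyadic decomposition of the symbol. Choose a Littlewood--Paley partition of unity $1 = \sum_{j\in\Z}\varphi_j$ in frequency, with $\varphi_j$ supported in $\{|\xi|\sim 2^j\}$, and decompose
\begin{equation*}
B(\eta,\xi-\eta) = \sum_{j,k\in\Z} B(\eta,\xi-\eta)\,\varphi_j(\eta)\,\varphi_k(\xi-\eta).
\end{equation*}
Split the sum according to the three regions $j\geq k+2$, $k\geq j+2$, and $|j-k|\leq 1$; this yields the standard paraproduct decomposition $B[f,g] = \Pi_1(f,g) + \Pi_2(f,g) + \Pi_3(f,g)$.

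The scaling hypothesis on $B$ says precisely that each rescaled localized piece $B(2^j\cdot,2^k\cdot)\varphi_0\otimes\varphi_0$ lies in a fixed bounded set of $C^\infty_c(\R^{2d})$, so Fourier-expanding on a large cube writes each piece as an absolutely convergent sum of tensor products $m^{(1)}_{j,k}(\eta)\,m^{(2)}_{j,k}(\xi-\eta)$, with $m^{(1)}_{j,k}$ bounded in all symbol seminorms at scale $2^j$. For the high--low piece $\Pi_1$ the output is frequency-localized at scale $\sim 2^j$, and the Littlewood--Paley square-function characterization combined with the Hardy--Littlewood maximal function control of the low-frequency factor in $g$ gives
\begin{equation*}
\|\Pi_1(f,g)\|_r \lesssim \bigl\|\bigl(\textstyle\sum_j |\Delta_j f|^2\bigr)^{1/2}\bigr\|_p\,\|Mg\|_q \lesssim \|f\|_p\,\|g\|_q,
\end{equation*}
which remains valid up to $q=\infty$. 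The symmetric piece $\Pi_2$ is identical. For the diagonal remainder $\Pi_3$ the output frequency is only $\lesssim 2^j$, so a further Littlewood--Paley decomposition in $\xi$ combined with the Fefferman--Stein vector-valued maximal inequality produces the same bound.

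For the second statement, the support condition $|\eta|\gtrsim|\xi-\eta|$ kills the low--high piece $\Pi_2$ outright, leaving only $\Pi_1$ and $\Pi_3$, both of which have output frequency supported where $|\xi|\lesssim|\eta|$. Writing $|\xi|^k = |\eta|^k\cdot(|\xi|/|\eta|)^k$ and absorbing the bounded factor $(|\xi|/|\eta|)^k$ into the symbol produces a new symbol of Coifman--Meyer type with the same support property, acting on $\nabla^k f$ rather than $f$. Applying the first part to this new symbol yields
\begin{equation*}
\|\nabla^k B[f,g]\|_r \lesssim \|\nabla^k f\|_p\,\|g\|_q.
\end{equation*}
The finiteness of $(p,q,r)$ is used precisely to invoke the square-function characterization when redistributing $\nabla^k$; this is why $q=\infty$, admissible in the first assertion via the maximal-function bound, must be excluded here. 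The main obstacle is the bookkeeping around "sufficiently many" derivatives: enough regularity must be imposed on $B$ to make the Fourier expansion of the rescaled symbol absolutely convergent and to absorb the decay factors $(1+|\text{frequency}|)^{-N}$ needed when summing over $(j,k)$, and one must verify that the redistribution argument in the quasilinear part produces a symbol still satisfying the Coifman--Meyer bounds uniformly in $k$.
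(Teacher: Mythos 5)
The paper does not prove this theorem: it states it as a named classical result (Coifman--Meyer) and uses it as a black box, so there is no argument in the paper to compare yours against. Your sketch follows the standard paraproduct route, and the main ideas---Littlewood--Paley decomposition, separation of variables via Fourier series on the rescaled localized symbol, square-function and maximal-function bounds for the high--low and diagonal paraproduct pieces, and redistribution of derivatives for the quasilinear variant---are the correct ones.

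Two imprecisions are worth flagging, though neither is fatal. First, in the second estimate, factoring out the size of $\eta$ at the Fourier level produces $|\eta|^k\widehat f(\eta)$, i.e.\ $|\nabla|^kf$, not $\nabla^kf$ directly; passing between the two requires Riesz transforms, bounded on $L^p$ only for $1<p<\infty$. It is this---rather than, as you claim, the square-function characterization failing at $q=\infty$---that makes the finiteness hypothesis on $p$ essential; your explanation of the finiteness requirement points at the wrong place. Second, your bound for $\Pi_1$ implicitly invokes the Littlewood--Paley characterization of $L^r$ to assemble the pieces, which requires $1<r<\infty$; the statement in the paper nominally allows $r=\infty$ (both $p,q=\infty$), a case where the operator lands in $BMO$ rather than $L^\infty$, and also $r\le 1$ (when $p,q$ are near $1$), where a Hardy-space substitute is needed. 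Your sketch silently drops these regimes. Since the paper's formulation itself is loose at those endpoints, this is a defect of the statement more than of your argument, but you should be aware of it.
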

\noindent
Mixing this result with the Sobolev embedding, we get for $2<p\leq \infty,\ 
\frac{1}{p}+\frac{1}{q}=\frac{1}{2}$ 
\begin{equation}\label{GN3}
\|fg\|_{H^s}\lesssim \|f\|_{L^p}\|g\|_{H^{s,q}}+\|g\|_{L^p}\|f\|_{H^{s,q}}\lesssim 
\|f\|_{L^p}\|g\|_{H^{s+d/p}}+\|g\|_{L^p}\|f\|_{H^{s+d/p}}.
\end{equation}

Due to the limited regularity of our multipliers,  we will need a multiplier theorem with loss 
from \cite{GuoPaus} (and inspired by corollary $10.3$ from \cite{GNT3}). Let us first describe the 
norm on symbols: for $\chi_j$ a smooth dyadic partition of the space, 
$\text{supp}(\chi_j)\subset \{2^{j-2}\leq |x|\leq 2^{j+2}\}$
\begin{equation*}
\|B(\eta,\xi-\eta)\|_{\tilde{L}^\infty_\xi\dot{B}^s_{2,1,\eta}}
=\|2^{js}\chi_j(\nabla)_\eta B(\eta,\xi-\eta)\|_{l^1(\Z,L^\infty_\xi L^2_\eta)}
\end{equation*}
The norm $\|B(\xi-\zeta,\zeta)\|_{\tilde{L}^\infty_\xi\dot{B}^s_{2,1,\zeta}}$ is defined similarly.
In practice, we rather estimate $\|B\|_{L^\infty_\xi\dot{H}^s}$ and use the interpolation estimate 
(see \cite{GNT3})
\begin{equation*}
 \|B\|_{\tilde{L}^\infty_\xi\dot{B}^s_{2,1,\eta}}\lesssim \|B\|_{L^\infty_\xi\dot{H}^{s_1}}^\theta
 \|B\|_{L^\infty_\xi\dot{H}^{s_2}}^{1-\theta},\ s=\theta s_1+(1-\theta)s_2.
\end{equation*}
We set $\|B\|_{[B^s]}=\min\big(\|B(\eta,\xi-\eta)\|_{\tilde{L}^\infty_\xi\dot{B}^s_{2,1,\eta}},
\ \|B(\xi-\zeta,\zeta)\|_{\tilde{L}^\infty_\xi\dot{B}^s_{2,1,\zeta}}\big)$.
The rough multiplier theorem is the following:
\begin{theo}[\cite{GuoPaus}]\label{singmult}
Let $0\leq s\leq d/2$, $q_1,q_2$ such that $\displaystyle \frac{1}{q_2}+\frac{1}{2}=
\frac{1}{q_1}+\bigg(\frac{1}{2}-\frac{s}{d}\bigg)$ \footnote{We write the relation between 
$(q_1,q_2)$ in a rather odd way in order to emphasize the similarity with the standard 
H\"older's inequality.}, and $\displaystyle 2\leq q_1',q_2\leq \frac{2d}{d-2s}$, then 
\begin{equation*}
\|B(f,g)\|_{L^{q_1}}\lesssim \|B\|_{[B^s]}\|f\|_{L^{q_2}}\|g\|_{L^2}.\\
\end{equation*}
Furthermore for $\displaystyle \frac{1}{q_2}+\frac{1}{q_3}=
\frac{1}{q_1}+\bigg(\frac{1}{2}-\frac{s}{d}\bigg)$, $2\leq q_i\leq \frac{2d}{d-2s}$ with $i=2,3$, 
\begin{equation*}
\|B(f,g)\|_{L^{q_1}}\lesssim \|B\|_{[B^s]}\|f\|_{L^{q_2}}\|g\|_{L^{q_3}},\\
\end{equation*}
\end{theo}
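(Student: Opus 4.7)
The plan is to exploit the Littlewood--Paley structure of the Besov norm $[B^s]$ and reduce the bilinear estimate to a Plancherel-type computation on each dyadic piece. By the symmetry $\eta \leftrightarrow \xi-\eta$ of the underlying trilinear form---which simply interchanges the roles of $f$ and $g$---I would first assume that the minimum in $\|B\|_{[B^s]}$ is realized by the $\dot B^s_{2,1}$ norm in $\eta$. Decomposing $B = \sum_{j\in\Z} B_j$ with $B_j = \chi_j(\nabla)_\eta B$, one has $\sum_j 2^{js}\|B_j\|_{L^\infty_\xi L^2_\eta}\lesssim \|B\|_{[B^s]}$, so the task reduces to bounding $\|B_j[f,g]\|_{L^{q_1}}$ individually and summing.

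For each $B_j$ I would use partial Fourier inversion in $\eta$: letting $\widetilde B_j(y,\xi)$ denote the inverse Fourier transform in $\eta$ of $B_j(\eta,\xi-\eta)$ (at each fixed $\xi$), which by construction is supported in $\{|y|\sim 2^j\}$, and computing the inner $\eta$-integral as the Fourier transform of a product, one obtains the representation
\begin{equation*}
B_j[f,g](x) = c_d\int_{|y|\sim 2^j}\bigl(\widetilde B_j(y,\nabla)\,[f(\cdot+y)\,g]\bigr)(x)\,dy,
\end{equation*}
which converts the bilinear Fourier multiplier into a weighted $y$-integral of linear multipliers applied to translated products.

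At the $L^2$ endpoint I would apply Minkowski in $y$, the Plancherel bound $\|\widetilde B_j(y,\nabla) F\|_{L^2}\leq \|\widetilde B_j(y,\cdot)\|_{L^\infty_\xi}\|F\|_{L^2}$, Hölder in $x$ with $1/q_2+1/q_3=1/2$, and Cauchy--Schwarz in $y$ over the ball of radius $2^j$, obtaining $\|B_j[f,g]\|_{L^2}\lesssim 2^{jd/2}\|\widetilde B_j\|_{L^2_yL^\infty_\xi}\|f\|_{L^{q_2}}\|g\|_{L^{q_3}}$. The companion case $s=0$ is immediate from a direct Plancherel--Cauchy--Schwarz computation on the trilinear form $\iint B(\eta,\zeta)\widehat f(\eta)\widehat g(\zeta)\overline{\widehat h}(\eta+\zeta)\,d\eta\,d\zeta$, yielding $\|B[f,g]\|_{L^2}\lesssim\|B\|_{L^\infty_\xi L^2_\eta}\|f\|_{L^2}\|g\|_{L^2}$. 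For the full range $0<s<d/2$ and the stated Lebesgue indices I would then bilinearly interpolate (Riesz--Thorin on the pair $(f,g)$) between these two endpoint families; the ceiling $q_i\leq 2d/(d-2s)$ reflects the Besov embedding $\dot B^s_{2,1}\hookrightarrow L^{2d/(d-2s)}$, which precisely caps the amount of integrability one can trade for $s$ derivatives of regularity.

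The hard part will be controlling the mixed norm $\|\widetilde B_j\|_{L^2_yL^\infty_\xi}$: it is a priori strictly larger than the only available quantity $\|B_j\|_{L^\infty_\xi L^2_\eta}$, and no Plancherel identity bridges them directly since the $L^\infty$ is in the coupled variable $\xi=\eta+\zeta$ rather than in $\zeta$ alone. The interpolation step is what circumvents this: rather than attempting to control $\|\widetilde B_j\|_{L^2_yL^\infty_\xi}$ head-on (which would force the entire $2^{jd/2}$ loss), one pays the loss only at the $s=d/2$ endpoint and converts the missing $d/2-s$ powers of $2^j$ into improved Lebesgue integrability on $f$ and $g$. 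Summing $\sum_j 2^{js}\|B_j\|_{L^\infty_\xi L^2_\eta}$---which is exactly $\|B\|_{[B^s]}$---then closes the estimate, and the argument for three non-trivial exponents follows by an additional Riesz--Thorin step on the $g$-slot.
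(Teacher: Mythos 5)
Your argument has a genuine gap that you half-acknowledge but then dismiss. The endpoint estimate you derive at $s=d/2$ controls $\|B_j[f,g]\|_{L^2}$ by $2^{jd/2}\|\widetilde B_j\|_{L^2_y L^\infty_\xi}$, and this mixed norm is \emph{not} controlled by the hypothesis. Minkowski's integral inequality gives $\|\widetilde B_j\|_{L^\infty_\xi L^2_y}\leq\|\widetilde B_j\|_{L^2_y L^\infty_\xi}$, and Plancherel in $y$ at fixed $\xi$ identifies the smaller side with $\|B_j\|_{L^\infty_\xi L^2_\eta}$, which is the only quantity the $[B^s]$-norm gives you. So you are bounding the operator by a symbol norm strictly \emph{larger} than the available one. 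Bilinear (Riesz--Thorin) interpolation cannot circumvent this: it needs both endpoints to hold \emph{in the controlled norm}, and it certainly cannot manufacture a bound at one endpoint from an uncontrolled quantity. As written you have one valid endpoint ($s=0$) and one invalid one, so there is nothing to interpolate.

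The culprit is the order of operations: applying Minkowski in $y$ first forces a $\sup_\xi$ inside the $y$-integral. The fix --- which is essentially the argument of Gustafson--Nakanishi--Tsai (Lemma 10.2 of \cite{GNT3}, on which \cite{GuoPaus} builds) --- is to keep the trilinear form $T_j=\int\!\!\int_{|y|\sim 2^j}\widetilde B_j(y,\xi)\,\widehat{\tau_y f\cdot g}(\xi)\,\overline{\widehat h(\xi)}\,dy\,d\xi$ and apply Cauchy--Schwarz in $y$ \emph{at fixed} $\xi$ before anything else. Plancherel in $y$ then gives $\bigl(\int_{|y|\sim 2^j}|\widetilde B_j(y,\xi)|^2 dy\bigr)^{1/2}=\|B_j(\cdot,\xi-\cdot)\|_{L^2_\eta}\leq\|B_j\|_{L^\infty_\xi L^2_\eta}$, i.e.\ exactly the right norm, and what remains is $\int\bigl(\int_{|y|\sim 2^j}|\widehat{\tau_y f\cdot g}(\xi)|^2 dy\bigr)^{1/2}|\widehat h(\xi)|\,d\xi$, handled by Cauchy--Schwarz in $\xi$, Plancherel, and $\|\tau_y f\cdot g\|_{L^2}\leq\|f\|_{L^{q_2}}\|g\|_{L^{q_3}}$ for $1/q_2+1/q_3\geq 1/2$. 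This yields the $2^{j(d/2)}$ loss at one endpoint and the lossless $s=0$ bound at the other \emph{with the correct norm at both}, after which your interpolation step does close the argument. Note also that the paper itself does not prove this theorem --- it quotes \cite{GuoPaus} --- so your task is precisely to reconstruct that proof, and the missing structural idea is this reordering.
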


\subparagraph{Dispersion for the group $e^{-itH}$}
According to $(\ref{eqcanonique})$, the linear part of the equation reads 
$\partial_tz-i\mathcal{H}z=0$, with $\mathcal{H}=\sqrt{-\Delta(\widetilde{g}'(L_c)-\Delta)}$ 
(see also section $4$). We will use a change of variable to reduce it to 
$\widetilde{g}'(L_c)=2$, set $H=\sqrt{-\Delta(2-\Delta)}$, and use the dispersive estimate 
from \cite{GNT1}, the version in Lorentz spaces follows from real interpolation as pointed out 
in \cite{GNT3}.
\begin{theo}[\cite{GNT1}\cite{GNT3}]\label{dispersion}
For $2\leq p\leq \infty$, $s\in \R$, $U=\sqrt{-\Delta/(2-\Delta)}$, we have 
\begin{equation*}
\|e^{itH}\varphi\|_{\dot{B}^s_{p,2}}\lesssim \frac{\|U^{(d-2)(1/2-1/p)}\varphi\|
_{\dot{B}^s_{p',2}}}{t^{d(1/2-1/p)}},
\end{equation*}
and for $2\leq p<\infty$ 
\begin{equation*}
\|e^{itH}\varphi\|_{L^{p,2}}\lesssim \frac{\|U^{(d-2)(1/2-1/p)}\varphi\|
_{L^{p',2}}}{t^{d(1/2-1/p)}} 
\end{equation*}
\end{theo}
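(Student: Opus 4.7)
The plan is to prove the core $L^1\to L^\infty$ kernel estimate by stationary phase, then recover the full statement by interpolation and Littlewood-Paley decomposition. Since $e^{itH}$ is unitary on $L^2$ while $U^{(d-2)(1/2-1/p)}$ is a bounded Fourier multiplier vanishing at the origin, Riesz-Thorin interpolation reduces matters to the endpoint
\begin{equation*}
\|e^{itH}U^{-(d-2)/2}\varphi\|_\infty \lesssim t^{-d/2}\|\varphi\|_1,
\end{equation*}
after which the Besov version follows by applying the uniform-in-frequency estimate on each Littlewood-Paley band $P_j\varphi$ and taking the weighted $\ell^2$ sum defining $\dot{B}^s_{p,2}$, while the Lorentz version follows by real interpolation between $p=2$ and $p=\infty$ as indicated in \cite{GNT3}.

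To prove the endpoint I would show, uniformly in $j\in\Z$, that the kernel
\begin{equation*}
K_{j,t}(x)=\int_{\R^d} e^{i(x\cdot\xi+t|\xi|\sqrt{2+|\xi|^2})}\chi(2^{-j}\xi)\left(\frac{2+|\xi|^2}{|\xi|^2}\right)^{(d-2)/4}d\xi
\end{equation*}
satisfies $\|K_{j,t}\|_\infty\lesssim t^{-d/2}$. After rescaling $\xi=2^j\eta$, the symbol $\omega(\xi)=|\xi|\sqrt{2+|\xi|^2}$ is Schr\"odinger-like ($\omega\sim|\xi|^2$) for $2^j\gg 1$ and wave-like ($\omega\sim\sqrt{2}|\xi|$) for $2^j\ll 1$. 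In the high frequency regime the Hessian of the phase is non-degenerate of size $\sim t\cdot 2^{2j}$ and the standard Schr\"odinger-type stationary phase directly yields $t^{-d/2}$. In the low frequency regime, the level sets of $\omega$ still have non-vanishing Gaussian curvature because the $|\xi|^2$ term under the square root breaks the conical degeneracy of a pure half-wave; a Littman-type oscillatory integral bound gives a decay of size $t^{-(d-1)/2}\cdot 2^{-j(d-1)/2}$ on each dyadic band, and combining with the compensating weight $U^{-(d-2)/2}\sim 2^{-j(d-2)/2}$ coming from the low-frequency behavior of $U$ produces precisely $t^{-d/2}$.

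The main obstacle is the low-frequency analysis: for a pure half-wave $e^{it\sqrt{-\Delta}}$ one only gets $t^{-(d-1)/2}$, so the upgrade to the Schr\"odinger rate hinges on the precise interplay between the cubic correction $|\xi|\sqrt{2+|\xi|^2}=\sqrt{2}|\xi|+|\xi|^3/(2\sqrt{2})+O(|\xi|^5)$ in the dispersion relation and the singular weight $U^{(d-2)/2}$. Concretely, one must split the oscillatory integral into the $d-1$ transverse directions, where the non-vanishing tangential curvature of the characteristic surface yields $t^{-(d-1)/2}$, and the single radial direction, where Van der Corput applied to the radial second derivative of $\omega$ contributes the missing $t^{-1/2}$ only after the renormalization by $U^{(d-2)/2}$ has been absorbed into the amplitude. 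The rigorous execution of this endpoint, uniformly in the dyadic parameter $j$, is exactly what is carried out in \cite{GNT1}, and I would invoke that result directly rather than redo the detailed oscillatory integral computation.
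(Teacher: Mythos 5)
The paper does not prove this theorem: it is quoted from \cite{GNT1} (the Besov/Lebesgue version) and \cite{GNT3} (the Lorentz refinement by real interpolation), and the only commentary the authors give is the remark following the statement, which attributes the loss $U^{(d-2)(1/2-1/p)}$ to the fact that $H(\xi)\sim|\xi|$ at low frequency ``has a strong angular curvature and no radial curvature.'' Your outline --- reduce by interpolation and unitarity to a frequency-localized $L^1\to L^\infty$ kernel bound, rescale each dyadic band, treat high frequencies by standard non-degenerate stationary phase and low frequencies via the weakened radial curvature, then invoke \cite{GNT1} --- is consistent with that citation, so there is no divergence of \emph{route} to speak of.

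Two points in your heuristic are, however, off. First, the appeal to ``non-vanishing Gaussian curvature of the level sets'' is a red herring: the level sets of $|\xi|$ (pure half-wave) are already spheres with full Gaussian curvature, and half-wave still only gives $t^{-(d-1)/2}$. What the cubic correction buys is a non-zero \emph{radial second derivative} $H''(r)\sim r$ for small $r$ --- exactly the ``no radial curvature at $\xi=0$'' degeneracy the paper's remark points to --- and this is the quantity that must be fed into Van der Corput, not the curvature of the level sets. Second, the sentence claiming that $t^{-(d-1)/2}\cdot 2^{-j(d-1)/2}$ together with the weight $U^{-(d-2)/2}\sim 2^{-j(d-2)/2}$ ``produces precisely $t^{-d/2}$'' cannot be right as stated: dyadic factors $2^{j}$ do not convert into powers of $t$, so without the radial contribution you stay at $t^{-(d-1)/2}$. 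The correct bookkeeping on a low band $|\xi|\sim 2^j\ll1$ is
\begin{equation*}
|K_{j,t}|\lesssim \underbrace{2^{jd}}_{\text{Jacobian}}\cdot\underbrace{2^{-j(d-2)/2}}_{U^{-(d-2)/2}}\cdot \underbrace{(t2^{j})^{-(d-1)/2}}_{\text{tangential}}\cdot\underbrace{(t2^{3j})^{-1/2}}_{\text{radial Van der Corput, }H''\sim 2^{j}}=t^{-d/2},
\end{equation*}
so the radial $(t2^{3j})^{-1/2}$ factor is genuinely needed to supply the last $t^{-1/2}$, while the role of the $U^{(d-2)/2}$ weight is only to make the powers of $2^j$ cancel so that the bound is uniform over dyadic bands; it does not ``enable'' the radial decay as your last paragraph suggests. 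Since you ultimately defer to \cite{GNT1} for the rigorous estimate, these are errors in the explanatory heuristic rather than fatal gaps, but the arithmetic as written does not close.
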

\begin{rmq}
The slight low frequency gain $U^{(d-2)(1/2-1/p)}$ is due to the fact that 
$H(\xi)=|\xi|\sqrt{2+|\xi|^2}$ behaves like $|\xi|$ at low frequencies, which has a strong
angular curvature and no radial curvature.
\end{rmq}
\begin{rmq}
Combining the dispersion estimate and the celebrated $TT^*$ argument, Strichartz estimates follow
$$
\|e^{itH}\varphi\|_{L^pL^q}\lesssim \|U^{\frac{d-2}{2}(1/2-1/p)}\varphi\|_{L^2},\ \frac{2}{p}
+\frac{d}{q}=\frac{d}{2},\ 2\leq p\leq \infty,
$$
however the dispersion estimates are sufficient for our purpose.
\end{rmq}

\section{Reformulation of the equations and energy estimate}\label{secenergie}
As observed in \cite{Benzoni1}, setting $w=\sqrt{K/\rho}\n \rho$, $\mathcal{L}$ the primitive of 
$\sqrt{K/\rho}$ such that $\mathcal{L}(\rho_c)=1$, 
$L=\mathcal{L}(\rho)$, $z=u+iw$ the Euler-Korteweg system rewrites 
\begin{eqnarray*}
\partial_tL+u\cdot\nabla L+a(L)\text{div}u&=&0,\\
\partial_tu+u\cdot \nabla u-w\cdot \nabla w -\nabla (a(L)\text{div}w)&=&-\widetilde{g}'(L)w,\\
\partial_tw+\nabla(u\cdot w)+\nabla (a(L)\text{div}u)&=&0,\\
\end{eqnarray*}
where the third equation is just the gradient of the first.
Setting $l=L-1$, in the potential case $u=\nabla\phi$, the system on $\phi,l$ then reads
\begin{equation}\label{EKpot}
 \left\{
\begin{array}{lll}
\displaystyle \partial_t\phi+\frac{1}{2}\big(|\nabla \phi|^2-|\nabla l|^2\big)-a(1+l)\Delta l=
-\widetilde{g}(1+l),\\
\displaystyle \partial_tl+\nabla \phi\cdot \nabla l+a(1+l)\Delta \phi=0,
\end{array}\right.
\end{equation}
with $\widetilde{g}(1)=0$ since we look for integrable functions. As a consequence of the stability 
condition $(\ref{stabassump})$, up to a change of variables we can and will assume through the 
rest of the paper that 
\begin{equation}\label{valeurg'}
\widetilde{g}'(1)=2. 
\end{equation}
The number $2$ has no significance except that this choice gives the same linear part as for 
the Gross-Pitaevskii equation linearized near the constant state $1$.
\begin{prop}\label{energy}
Under the following assumptions 
\begin{itemize}
 \item $(\n \phi_0,l)\in H^{2n}\times H^{2n+1}$
 \item Normalized $(\ref{stabassump})$: $\widetilde{g}'(1)=2$ 
 \item $L(x,t)=1+l(x,t)\geq m>0$ for $(x,t)\in \R^d\times [0,T]$,
 \end{itemize}
then for $n>d/4+1/2$, there exists a continuous function $C$ such that the solution of $(\ref{EKpot})$ satisfies 
the following estimate
\begin{equation*}
\begin{aligned}
&\|\nabla \phi\|_{H^{2n}}+\|l\|_{H^{2n+1}}\\
&\leq
 \big(\|\nabla \phi_0\|_{H^{2n}}+\|l_0\|_{H^{2n+1}}\big)
 \rm{exp}\bigg(\int_0^t C(\|l\|_{L^\infty},\|\frac{1}{l+1}\|_{L^\infty},\|z\|_{L^\infty})\\
 &\hspace{75mm}\times(\|\nabla\phi(s)\|_{W^{1,\infty}} +\|l(s)\|_{W^{2,\infty}})ds\bigg),
 \end{aligned}
\end{equation*}
where $z(s)=\nabla\phi(s)+i\nabla w(s)$.
\end{prop}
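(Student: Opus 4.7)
The plan is to work in the extended $(l, z)$ formulation \eqref{ES}, where $z = \nabla \phi + i \nabla l$, which makes the quasilinear Schr\"odinger structure of the system manifest. For each multi-index $\alpha$ with $|\alpha| \leq 2n$ I apply $\partial^\alpha$ to the system and compute the time evolution of a suitably weighted energy. The only part of $\|l\|_{H^{2n+1}}$ not already controlled by $\|\partial^\alpha z\|_{L^2}$ (with $|\alpha|\leq 2n$) is the low-frequency piece $\|l\|_{L^2}$, which I would handle separately from the identity $\partial_t l = -\nabla\phi\cdot\nabla l - a(L)\,\mathrm{div}\,u$; this contributes at worst a linear-in-time growth absorbed by the final Gronwall step.

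The core of the proof is the modified energy
\begin{equation*}
\mathcal{E}_n \;=\; \sum_{|\alpha|\leq 2n} \int \Big( a(L)\,|\partial^\alpha z|^2 + \widetilde{g}'(L)\,|\partial^\alpha l|^2 \Big)\,dx,
\end{equation*}
whose weights are designed to produce exact cancellation at the highest derivative order. Indeed, the two contributions at top order from the terms $\nabla(a(L)\Delta l)$ in the $\phi$-equation and $\nabla(a(L)\Delta\phi)$ in the $l$-equation, paired respectively against $a(L)\,\partial^\alpha\nabla\phi$ and $a(L)\,\partial^\alpha\nabla l$, integrate by parts to two terms $\pm 2\int a(L)^2\,\partial^\alpha\Delta\phi\cdot\partial^\alpha\Delta l\,dx$ that cancel. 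A second cancellation eliminates the lower-order cross term produced by $-\widetilde{g}'(L)\nabla l$ (from the pressure term in the $\phi$-equation) against the one produced by integrating $-2\int a(L)\,\widetilde{g}'(L)\,\partial^\alpha l\cdot\partial^\alpha\Delta\phi\,dx$ by parts. These two cancellations, both absent in the naive energy of \cite{Benzoni1}, are what prevents a spurious linear-in-time growth.

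What remains are commutators of $\partial^\alpha$ with $a(L)$, $\widetilde{g}'(L)$ and the transport coefficients, plus contributions through $\partial_t L$ to the time derivative of the weights. These I would estimate using the Gagliardo-Nirenberg inequality \eqref{GN2}, the composition estimate \eqref{compo} applied to $a\circ\mathcal{L}^{-1}$ and $\widetilde{g}'$ (both smooth and well defined since $L\geq m > 0$), and the Coifman-Meyer bound \eqref{GN3}. The target inequality is
\begin{equation*}
\frac{d}{dt}\mathcal{E}_n \;\lesssim\; C\big(\|l\|_\infty, \|1/(l+1)\|_\infty, \|z\|_\infty\big)\,\big(\|\nabla\phi\|_{W^{1,\infty}} + \|l\|_{W^{2,\infty}}\big)\,\mathcal{E}_n.
\end{equation*}
Since $a(L)$ and $\widetilde{g}'(L)$ are bounded above and below under the bootstrap $L\geq m$, $\mathcal{E}_n$ is equivalent to $\|\nabla\phi\|_{H^{2n}}^2 + \|\nabla l\|_{H^{2n}}^2$, and Gronwall's lemma then gives the stated estimate. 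The restriction $n>d/4+1/2$ enters precisely when applying \eqref{GN2}: it ensures $H^{2n-1}\hookrightarrow L^\infty$, so that at each occurrence of a commutator only one or two derivatives need to escape in $L^\infty$.

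The main technical obstacle is the commutator analysis itself. Each expansion of the form $[\partial^\alpha, a(L)]\,\mathrm{div}\,u = \sum_{0 < \beta \leq \alpha} c_{\alpha\beta}\,\partial^\beta a(L)\cdot\mathrm{div}\,\partial^{\alpha-\beta} u$, and analogous ones for the transport term $u\cdot\nabla z$ and for derivatives of $\widetilde{g}(1+l)$, must be redistributed via \eqref{GN2} and the chain rule so that at most two derivatives land on $l$ in $L^\infty$ while all remaining ones pair against $\mathcal{E}_n^{1/2}$ in $L^2$. The combinatorics of this redistribution, together with the composition estimate for $a(L)$ and $\widetilde{g}'(L)$, is where the bulk of the technical work sits.
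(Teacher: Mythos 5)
The overall Gronwall strategy is right, and your second (``pressure'') cancellation is precisely the paper's cancellation of the quadratic term, obtained by coupling the $z$-energy to an $l$-energy --- this is the genuinely new ingredient relative to \cite{Benzoni1}, and you found it. Your aside about $\|l\|_{L^2}$ growing linearly in time is unnecessary and in fact wrong: the $\alpha=0$ summand of your own $\mathcal{E}_n$ controls $\|l\|_{L^2}$ directly, and the cancellation at $\alpha=0$ is elementary, with no loss.

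The fatal gap is the choice of weight. The quasilinear operator $i\nabla(a\,\mathrm{div}\,z)$ together with the convective term $iw\cdot\nabla z$ are not skew-adjoint in $L^2(a\,dx)$. The top-order cancellation you describe ($\pm 2\int a^2\partial^\alpha\Delta\phi\,\partial^\alpha\Delta l$) eliminates the highest symbol, but what survives is a family of first-order terms --- from $\nabla a$ acting on the outer divergence, from the Leibniz commutator $[\partial^\alpha,a]$, and from the $w\cdot\nabla$ convection --- whose combined contribution has the form
\begin{equation*}
\mathrm{Im}\int\partial^\alpha\overline{z}\cdot\Big[A(\rho)\,\nabla\rho\cdot\nabla\ -\ B(\rho)\,\nabla\rho\,\mathrm{div}\Big]\partial^\alpha z\,dx,
\end{equation*}
carrying $2n+1$ derivatives of $z$ (one more than $\mathcal{E}_n$ controls), with the mismatch $A-B$ scaling like $n$ because there are $|\alpha|$ first-order Leibniz terms. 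The only mechanism to kill such a term at the required order is the irrotationality identity
\begin{equation*}
\mathrm{Im}\int\overline{z_\alpha}\cdot(\nabla - I_d\,\mathrm{div})z_\alpha\cdot\nabla H\,dx = 0
\end{equation*}
(real $H$, irrotational $z_\alpha$), which demands $A=B$. With the weight $a(L)$, i.e.\ the gauge $\sqrt{a}$, the balance equation $\frac{a}{\rho}+2na' = 2a\,(\sqrt{a})'/\sqrt{a}=a'$ fails for a generic capillarity; one needs $\varphi_n'/\varphi_n=\frac{1}{2\rho}+\frac{na'}{a}$, that is $\varphi_n(\rho)=a^n(\rho)\sqrt{\rho}$. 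This explicit $n$-dependence is exactly why the gauge of \cite{Benzoni1} is indispensable, and why the paper works with $\Delta^n$ rather than general $\partial^\alpha$ (so the Leibniz coefficient is the clean factor $2n$). Replace $a(L)$ by $\varphi_n^2=a^{2n}\rho$ on the $z$-part and by $\varphi_n^2/a$ on the $l$-part of the energy, and invoke the irrotationality identity explicitly to discard the residual $(\nabla-I_d\,\mathrm{div})$ term; with that fixed, your outline of commutator estimates, Gagliardo--Nirenberg, composition bounds, and Gronwall goes through as planned.
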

This is almost the same estimate as in \cite{Benzoni1} but for an essential point: in the integrand
of the right hand side there is no constant added to $\|\nabla \phi(s)\|_{W^{1,\infty}}
+\|l(s)\|_{W^{2,\infty}}$, the price to pay is that we can not control $\phi$ but its gradient (this is naturel since the difficulty is related to the low frequencies).
Before going into the detail of the computations, let us underline on a very simple example 
the idea behind it. We consider the linearized system 
\begin{eqnarray}
\label{lin1}
\p_t\phi-\Delta l+2l=0,\\
\label{lin2}
\p_tl+\Delta \phi=0. 
\end{eqnarray}
Multiplying $(\ref{lin1})$ by $\phi$, $(\ref{lin2})$ by $l$, integrating and using Young's 
inequality leads to the ``bad'' estimate
\begin{equation*}
 \frac{d}{dt}\big(\|\phi\|_{L^2}^2+\|l\|_{L^2}^2\big)\lesssim 2(\|\phi\|_{L^2}^2+\|l\|_{L^2}^2),
\end{equation*}
on the other hand if we multiply $(\ref{lin1})$ by $-\Delta\phi$, 
$(\ref{lin2})$ by $(-\Delta+2)l$ we get
\begin{equation*}
\frac{d}{dt}\int_{\R^d}(\frac{|\n l|^2+|\n\phi|^2}{2}+l^2)dx=0,
\end{equation*}
the proof that follows simply mixes this observation with the gauge method from \cite{Benzoni1}.
\begin{proof}
Let us start with the equation on $z=\nabla \phi+i\nabla l=u+iw$, we remind that 
$\widetilde{g}'(1)=2$, so that we write it
\begin{equation}\label{schroddeg}
\partial_tz+z\cdot \nabla z+i\nabla (a \text{div}z)=-2w+(2-\widetilde{g}'(1+l))w.
\end{equation}
We shortly recall the method from \cite{Benzoni1} that we will slightly simplify since we do not 
need to work in fractional Sobolev spaces. Due to the quasi-linear nature of the system
(and in particular the bad ``non transport term'' $iw\cdot \n z$), it is not possible to directly estimate $\|z\|_{H^{2n}}$ by energy estimates, instead 
one uses a gauge function $\varphi_n(\rho)$ and control $\|\varphi_n\Delta^nz\|_{L^2}$. When we take 
the product of $(\ref{schroddeg})$ with $\varphi_n$ real, a number of 
commutators appear:
\begin{equation}\label{C1}
\varphi_n\Delta^n\partial_tz=\partial_t(\varphi_n\Delta^nz)-(\partial_t\varphi_n)\Delta^nz
=\partial_t(\varphi_n\Delta^nz)+C_1
\end{equation}
\begin{equation}\label{C2}
\varphi_n\Delta^n(u\cdot \nabla z)=u\cdot \nabla (\varphi_n\Delta^nz)+[\varphi_n\Delta^n,u\cdot\n]
z:=u\cdot \n(\varphi_n\Delta^n z)+C_2
\end{equation}
\begin{equation}\label{C3}
i\varphi_n\Delta^n(w\cdot \nabla z)=iw\cdot \nabla (\varphi_n\Delta^nz)+
[\varphi_n\Delta^n,w\cdot\n]z:=iw\cdot \nabla (\varphi_n\Delta^nz)+C_3,
\end{equation}
The term $\n (a\text{div}z)$ requires a bit more computations:
\begin{eqnarray*}
 i\varphi_n\Delta^n\n (a\text{div}z)=i\n (\varphi_n\Delta^n (a \text{div}z))-i(\n \varphi_n)
 \Delta^n(a\text{div}z),
\end{eqnarray*}
then using recursively $\Delta (fg)=2\n f\cdot\n g+f\Delta g+(\Delta f)g$ we get 
\begin{equation*}
\Delta^n (a\text{div}z)=a\text{div}\Delta^n z+2n(\n a)\cdot \Delta^{n}z+C,
\end{equation*}
where $C$ contains derivatives of $z$ of order at most $2n-1$, so that 
\begin{eqnarray}
\nonumber
i\varphi_n\Delta^n\n (a\text{div}z)&=&i\n \bigg(\varphi_n\big(a\text{div}\Delta^n z
 +2n(\n a)\cdot \Delta^{n}z\big)\bigg)-i\n \varphi_n a\text{div} \Delta^n z+i\n (\varphi_n C)\\
\nonumber  &=&i\nabla \big(a\text{div}(\varphi_n\Delta^nz)\big)
+2in\nabla a\cdot \varphi_n\nabla \Delta^nz-ia(\nabla+I_d\text{div}) \Delta^nz\cdot \nabla 
\varphi_n\\
\label{C4}&&+C_4,
\end{eqnarray}
where $C_4$ contains derivatives of $z$ of order at most $2n$ and by notation $I_d\text{div}\Delta^nz\cdot \nabla 
\varphi_n={\rm div}\D^n z\,\n\va	_n$. Finally, we define 
$C_5=-\varphi_n\Delta^n\big((2-\widetilde{g}'(1+l))w\big)$. The equation on 
$\varphi_n\Delta^nz$ thus reads 
\begin{eqnarray}
\p_t(\varphi_n\Delta^nz)+u\cdot \n (\varphi_n\Delta^nz)+i\n \big(a\text{div}(\varphi_n\Delta^nz)
\big)+iw u\cdot \n (\varphi_n\Delta^nz)+ 2\varphi_n\Delta^nw=\\
-\sum_1^5C_k-2in\varphi_n\nabla \Delta^nz\cdot \nabla a+ia(\nabla+I_d\text{div}) \Delta^nz\cdot \nabla 
\varphi_n
\end{eqnarray}
Taking the scalar product with $\varphi_n\Delta^nz$, integrating and taking the real part gives 
for the first three terms
\begin{equation}
\frac{1}{2}\frac{d}{dt}\int_{\R^d}(\varphi_n\Delta^nz)^2dx-\frac{1}{2}\int_{\R^d} \text{div}u
|\varphi_n\Delta^nz|^2dx.
\end{equation}
And we are left to control the remainder terms from $(\ref{C3},\, \ref{C4})$. Using 
$w=\frac{a}{\rho} \nabla \rho$, $\varphi_n=\varphi_n(\rho)$, we rewrite 
\begin{eqnarray*}
i\varphi_nw\cdot \nabla (\Delta^nz)+2ni\varphi_n\nabla (\Delta^nz)\cdot \nabla a-ia \nabla
(\Delta^nz)\cdot \nabla \varphi_n-ia\nabla \varphi_n \,\text{div}\Delta^nz
\\
=i\varphi_n\bigg(w\cdot \nabla -\frac{a\nabla \varphi_n}{\varphi_n}\cdot \nabla 
-\frac{a\nabla \varphi_n}{\varphi_n}\text{div}+2n\nabla a\cdot\n\bigg)\Delta^nz.
\end{eqnarray*}
\begin{equation}\label{reste}
=i\varphi_n\bigg[\bigg(\frac{a}{\rho}-a\frac{\varphi_n'}{\varphi_n}\bigg)\nabla \rho\cdot\nabla-
\frac{a\varphi_n'}{\varphi_n}\nabla \rho\,\text{div}+2na'\nabla \rho\cdot \nabla\bigg]
\Delta^nz
\end{equation}
If the $\text{div}$ operator was a gradient, the most natural choice for $\varphi_n$ would be
to take
\begin{equation*}
 \frac{a}{\rho}-\frac{2a\varphi_n'}{\varphi_n}+2na'=0\Leftrightarrow \frac{\varphi_n'}{\varphi_n}
 =\frac{1}{2\rho}+\frac{na'}{a}\Leftarrow \varphi_n(\rho)=a^n(\rho)\sqrt{\rho}.
\end{equation*}
For this choice the remainder $(\ref{reste})$ rewrites
\begin{equation*}
\bigg[\bigg(\frac{a}{\rho}-a\frac{\varphi_n'}{\varphi_n}\bigg)\nabla \rho\cdot\nabla-
\frac{a\varphi_n'}{\varphi_n}\nabla \rho\text{div}+2na'\nabla \rho\cdot \nabla\bigg]\Delta^nz
=\bigg(\frac{a}{2\rho}+na'\bigg)\nabla \rho\cdot (\n-I_d\text{div})\Delta^nz.
\end{equation*}
Using the fact that $\varphi_n(a/(2\rho)+na')(\rho)\n \rho$ is a real valued gradient, and setting 
$z_n=\Delta^nz$, we see that the contribution of $(\ref{reste})$ in the energy estimate is 
actually $0$ from the following identity (with the Hessian $\text{Hess}H$):
\begin{eqnarray*}
\text{Im}\int_{\R^d} \overline{z_n}\cdot (\n-I_d\text{div})z_n\cdot \nabla H(\rho)dx
&=&\text{Im}\int_{\R^d}\overline{z_{i,n}}\p_jz_{i,n}\p_j H-\overline{z_{i,n}}\p_jz_{j,n}\p_iH
\\
&=&
\text{Im}\int_{\R^d} \overline{z_n}\text{Hess}Hz_n-\D H|z_n|^2 \\
&&\hspace{2cm}-\partial_jH z_{i,n}(\overline{\partial_jz_{i,n}}
-\overline{\partial_iz_{j,n}})dx\\
&=&0.
\end{eqnarray*}
We have used the fact that $z$ is irrotationnal.
Finally, we have obtained 
\begin{equation}\label{avantdernier}
\frac{1}{2}\frac{d}{dt}\int \|\varphi_n\Delta^nz\|_{L^2}^2dx-\frac{1}{2}\int_{\R^d}(\text{div}u)
|\varphi_n\Delta^nz|^2
=-\int\sum_1^5C_k\varphi_n\Delta^n\overline{z}dx-2\int\varphi_n^2 \Delta^nw\Delta^nu\,dx.
\end{equation}
Note that the terms $C_k\varphi_n\Delta^nz$ are cubic while $\varphi_n\Delta^nw\Delta^nu$ is 
only quadratic, thus we will simply bound the first ones while we will need to cancel the later.
\paragraph{Control of the $C_k$ :}
From their definition, it is easily seen that the $(C_k)_{2\leq i\leq 4}$ only contain terms of 
the kind $\partial^\alpha f\partial^\beta g$ with $f,g=u$ or $w$, $|\alpha|+|\beta|\leq 2n$, thus 
\begin{equation*}
\forall\, 2\leq k\leq 4,\ \bigg|\int C_k\varphi_n\Delta^nzdx\bigg|\lesssim 
\sum_{|\alpha|+|\beta|=2n,\ f,g=u\text{ or }w}
\|\p^\alpha f\p^\beta g\|_{L^2}\|z\|_{H^{2n}}
\end{equation*}
When $|\alpha|=0,\ |\beta|=2n$, we have obviously 
$\|f\partial^\beta g\|_{L^2}\lesssim \|f\|_\infty\|g\|_{H^{2n}}$, while the general case
$\|\partial^\alpha f\partial^\beta g\|_2\lesssim \|f\|_\infty\|g\|_{H^{2n}}
+\|g\|_\infty\|f\|_{H^{2n}}$
is Gagliardo-Nirenberg'interpolation inequality $(\ref{GN2})$. We deduce 
\begin{equation*}
\forall\, 2\leq k\leq 4,\ \bigg|\int C_k\varphi_n\Delta^nzdx\bigg|\lesssim 
\|z\|_\infty\|z\|_{H^{2n}}^2.
\end{equation*}
Let us deal now with $C_1=-\p_t\va_n\D^n z$, since $\partial_t\varphi_n=-\varphi_n'\text{div}(\rho u)$ we have
\begin{equation*}
 \bigg|\int_{\R^d}C_1\varphi_n\Delta^n\overline{z}dx\bigg|\lesssim F((\|l\|_{L^\infty},\|\frac{1}{l+1}\|_{L^\infty}) (\|u\|_{W^{1,\infty}}+\|z\|_{L^\infty}^2)
 \|z\|_{H^{2n}}^2
\end{equation*}
with $F$ a continuous function.\\
We now estimate the contribution of $C_5=-\varphi_n\Delta^n\big((2-\widetilde{g}'(1+l))w\big)$: since $\widetilde{g}'(1)=2$, from the composition rule
$(\ref{compo})$ we have $\|\widetilde{g}'(1+l)-2\|_{H^{2n}}\lesssim F_1(\|l\|_{L^\infty},\|\frac{1}{l+1}\|_{L^\infty}) \|l\|_{H^{2n}}$ with $F_1$ a continuous function with $F_1(0,\cdot)=0$
so that
\begin{equation*}
\begin{aligned}
&\bigg|\int_{\R^d}C_5\varphi_n\Delta^n \overline{z} dx\bigg|\lesssim  \|(2-\widetilde{g}')w\|_{H^{2n}}
\|z\|_{H^{2n}}
\lesssim (\|(2-\widetilde{g}'(1+l))\|_{L^\infty}\|z\|_{H^{2n}}\\
&\hspace{5cm}+ F_1(\|l\|_{L^\infty},\|\frac{1}{l+1}\|_{L^\infty})\|l\|_{H^{2n}}\|z\|_\infty)\|z\|_{H^{2n}}.
\end{aligned}
\end{equation*}
To summarize, for any $1\leq k\leq 5$, we have 
\begin{equation}\label{estimCk}
\bigg|\int_{\R^d}C_k\varphi_n\Delta^n z dx\bigg| 
\lesssim F_2(\|l\|_{L^\infty},\|\frac{1}{l+1}\|_{L^\infty})(\|l\|_\infty+\|z\|_{W^{1,\infty}}+\|z\|_{L^\infty}^2)(\|l\|_{H^{2n}}^2+\|z\|_{H^{2n}}^2),
\end{equation}
 with $F_2$ a continuous function.

\paragraph{Cancellation of the quadratic term} 
We start with the equation on $l$ to which we apply $\varphi_n\Delta^n$, multiply by 
$\varphi_n(\Delta^nl)/a$ and
integrate in space
\begin{eqnarray*}
\int_{\R^d}\frac{\varphi_n^2}{a}\Delta^nl\partial_t\Delta^nl+\frac{\va_n^2}{a}(\Delta^nl)
\Delta^n(\nabla \phi\cdot \nabla l)+\varphi_n^2\Delta^nl\frac{\Delta^n(a\Delta \phi)}{a}=0.
\end{eqnarray*}
Commuting $\Delta^n$ and $a$, and using an integration by part, this rewrites
\begin{equation*}
\begin{aligned}
&\displaystyle \frac{1}{2}\frac{d}{dt}\int_{\R^d}\frac{\varphi_n^2}{a}(\Delta^nl)^2dx-\int_{\R^d}\frac{d}{dt}(\frac{\varphi_n^2}{2a})|\Delta^nl|^2 dx+\int_{\R^d}\frac{\va_n^2}{a}(\Delta^nl)
\Delta^n(\nabla \phi\cdot \nabla l)\\
&\hspace{6cm}+\int_{\R^d}\varphi_n^2\Delta^n l\,\D\Delta^n \phi dx+\frac{\varphi_n^2}{a}\Delta^nl[\Delta^n,a]
\Delta \phi dx\\[2mm]
&\displaystyle \frac{1}{2}\frac{d}{dt}\int_{\R^d}\frac{\varphi_n^2}{a}(\Delta^nl)^2dx-\int_{\R^d}\frac{d}{dt}(\frac{\varphi_n^2}{2a})|\Delta^nl|^2 dx+\int_{\R^d}\frac{\va_n^2}{a}(\Delta^nl)
\Delta^n(\nabla \phi\cdot \nabla l)\\
&-\int_{\R^d}\varphi_n^2\,\n\Delta^n l\cdot\n\Delta^n \phi \,dx- \int_{\R^d}\Delta^n l\, \n \varphi_n^2\cdot\n\Delta^n \phi \,dx+\frac{\varphi_n^2}{a}\Delta^nl[\Delta^n,a]
\Delta \phi dx\\
\end{aligned}
\end{equation*}
We remark that the integrand in the right hand side only depends on $l,\n\phi$ and their 
derivatives, therefore using the same commutator arguments as previously, we get the bound 
\begin{equation}
\label{suprquad}
\begin{aligned}
&\frac{1}{2}\frac{d}{dt}\int_{\R^d}\frac{\varphi_n^2}{a}(\Delta^nl)^2dx
-\int_{\R^d}\varphi_n^2(\Delta^n\n \phi)\Delta^n\n ldx\\
&\hspace{2cm}\lesssim F_3(\|l\|_{L^\infty},\|\frac{1}{l+1}\|_{L^\infty})(\|l\|_\infty+\|z\|_{W^{1,\infty}}+\|z\|_{L^\infty}^2)(\|l\|_{H^{2n}}^2+\|z\|_{H^{2n}}^2),
\end{aligned}
\end{equation}
 with $F_3$ a continuous function.
 Now if we add $(\ref{avantdernier})$ to $2\times(\ref{suprquad})$ and use the estimates on $(C_k)$
we obtain 
\begin{equation*}
\begin{aligned}
&\frac{1}{2}\frac{d}{dt}\int \|\varphi_n\Delta^nz\|_{L^2}^2+\|\Delta^nl\|_{L^2}^2dx\\
&\hspace{2cm}\lesssim F_4(\|l\|_{L^\infty},\|\frac{1}{l+1}\|_{L^\infty})(\|l\|_\infty+\|z\|_{W^{1,\infty}}+\|z\|_{L^\infty}^2)(\|l\|_{H^{2n}}^2+\|z\|_{H^{2n}}^2),
\end{aligned}
\end{equation*}
 with $F_4$ a continuous function. The conclusion then follows from Gronwall's lemma.
\end{proof}
\section{Global well-posedness in dimension larger than 4}\label{sectiondlarge}
We first make a further reduction of the equations that will be also used for the cases $d=3,4$,
namely we rewrite it as a linear Schr\"odinger equation with some remainder. In addition to 
$\widetilde{g}'(1)=2$, we can also assume $a(1)=1$, so that $(\ref{EKpot})$ 
rewrites\footnote{The assumption $a(1)=1$ should add some constants in factor of the nonlinear 
terms, we will neglect it as it will be clear in the proof that multiplicative constants do not 
matter.}
\begin{equation}
 \left\{
\begin{array}{lll}
\displaystyle \partial_t\phi-\Delta l+2l=(a(1+l)-1)\Delta l
-\frac{1}{2}\big(|\nabla \phi|^2-|\nabla l|^2\big)
+(2l-\widetilde{g}(1+l)),\\
\displaystyle \partial_tl+\Delta \phi=-\nabla \phi\cdot \nabla l+(1-a(1+l))\Delta \phi.
\end{array}\right.
\label{systedepart}
\end{equation}
The linear part precisely corresponds to the linear part of the Gross-Pitaevskii equation. 
In order to diagonalize it, following \cite{GNT1} we set 
\begin{equation*}
U=\sqrt{\frac{-\Delta}{2-\Delta}},\ H=\sqrt{-\Delta(2-\Delta)},\ \phi_1=U\phi,\ l_1=l.
\end{equation*}
The equation writes in the new variables
\begin{equation}\label{ekdiag}
 \left\{
\begin{array}{lll}
\displaystyle \partial_t\phi_1+Hl_1=U\bigg((a(1+l_1)-1)\Delta l_1
-\frac{1}{2}\big(|\nabla U^{-1} \phi_1|^2-|\nabla l_1|^2\big)
+(2l_1-\widetilde{g}(1+l_1))\bigg),\\
\displaystyle \partial_tl_1-H \phi_1=-\nabla U^{-1}\phi_1\cdot \nabla l_1-(1-a(1+l_1))H U^{-1}\phi_1.
\end{array}\right.
\end{equation}
More precisely, if we set $\psi=\phi_1+il_1,\ \psi_0=(U\phi+il)|_{t=0}$, the Duhamel formula 
gives 
\begin{eqnarray}\label{duhamel}
\psi(t)&=&e^{itH}\psi_0+\int_0^te^{i(t-s)H}\mathcal{N}(\psi(s))ds,\\
\nonumber 
\text{with }\mathcal{N}(\psi)&=&U\big((a(1+l_1)-1)\Delta l_1-\frac{1}{2}\big(|\nabla U^{-1} 
\phi_1|^2-|\nabla l_1|^2\big)+(2l_1-\widetilde{g}(1+l_1))\big)\\
&&+i\big(-\nabla U^{-1}\phi_1\cdot \nabla l_1-\big(1-a(1+l_1)\big)H\phi
\big).
\end{eqnarray}
We underline that for low frequencies the situation is more favorable than for 
the Gross-Pitaevskii equation, as all the terms where $U^{-1}$ appears already contain derivatives 
that compensate this singular multiplier. Note however that the Gross-Pitaevskii equations are 
formally equivalent to this system via the Madelung transform in the special case $K(\rho)=\kappa
/\rho$, so our computations are a new way of seeing that these singularities can be removed in 
appropriate variables.
Let us now state the key estimate:

\begin{prop}\label{decay}
Let $d\geq 5$, $T>0$, $k\geq 2$, $N\geq k+2+d/2$, we set
$\displaystyle \|\psi\|_{X_T}=\|\psi\|_{L^\infty([0,T], H^N)}+
\sup_{t\in [0,T]}(1+t)^{d/4}\|\psi(t)\|_{W^{k,4}}$,
then the solution of $(\ref{duhamel})$ satisfies 
\begin{equation*}
\forall\,t\in [0,T],\ \|\psi(t)\|_{W^{k,4}}\lesssim \frac{\|\psi_0\|_{W^{k,4/3}}+\|\psi_0\|_{H^N}
+G(\|\psi\|_{X_t},\|\frac{1}{1+l_1}\|_{L^\infty_t(L^\infty)})\|\psi\|_{X_T}^2}{(1+t)^{d/4}},
\end{equation*}
with $G$ a continuous function.
\end{prop}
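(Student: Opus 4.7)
The plan is a classical bootstrap/decay argument based on the Duhamel representation \eqref{duhamel}, exploiting that the Schr\"odinger-type decay rate $(1+t)^{-d/4}$ of Theorem~\ref{dispersion} is integrable in time exactly when $d\geq 5$. The first step is to upgrade Theorem~\ref{dispersion} to a pointwise-in-time $W^{k,4}$-estimate valid for all $\tau\geq 0$. Combining the dispersive bound $\|e^{i\tau H}f\|_{L^4}\lesssim \tau^{-d/4}\|f\|_{L^{4/3}}$ (for large $\tau$) with the unitarity of $e^{i\tau H}$ on Sobolev spaces together with the embedding $H^{k+d/4}\hookrightarrow W^{k,4}$ (for small $\tau$) yields
$$\|e^{i\tau H}f\|_{W^{k,4}}\lesssim (1+\tau)^{-d/4}\bigl(\|f\|_{W^{k,4/3}}+\|f\|_{H^N}\bigr).$$
The low-frequency weight $U^{(d-2)(1/2-1/4)}$ of Theorem~\ref{dispersion} is harmless: it is bounded at high frequencies, and in the applications below it always acts on terms already carrying at least one derivative (for instance $HU^{-1}=2-\D$ and $\n U^{-1}$ is an $L^p$ multiplier), so the singular factor is absorbed.

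Next I would bound $\mathcal{N}(\psi)$ in the two dual spaces $W^{k,4/3}$ and $H^{k+d/4}$. Writing $\mathcal{N}(\psi)$ schematically as a sum of quadratic products $D^{\alpha}\psi\cdot D^{\beta}\psi$ with $|\alpha|+|\beta|\leq 2$ (coming from $(a(1+l_1)-1)\D l_1$, $|\n U^{-1}\phi_1|^2$, $|\n l_1|^2$, $\n U^{-1}\phi_1\cdot\n l_1$ and $(1-a(1+l_1))HU^{-1}\phi_1$) plus higher-order semilinear remainders arising from the smooth compositions $a(1+l_1)-1$ and $2l_1-\widetilde{g}(1+l_1)$ (handled by \eqref{compo}), I apply the fractional Leibniz rule and the product estimates \eqref{GN2}, \eqref{GN3}, systematically placing the factor carrying the most derivatives in $L^2$ (bounded by $\|\psi\|_{H^N}$) and the other in $L^4$ (bounded by $\|\psi\|_{W^{k,4}}$). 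The condition $N\geq k+2+d/2$ is precisely what is needed to accommodate the two quasilinear derivatives and the additional Sobolev shift $d/4$, while $k\geq 2$ (together with $d\geq 5$) leaves enough room in the intermediate Sobolev embeddings used to control the low-regularity factor. This yields
$$\|\mathcal{N}(\psi(s))\|_{W^{k,4/3}}+\|\mathcal{N}(\psi(s))\|_{H^{k+d/4}}\lesssim G(\cdot)\,\|\psi(s)\|_{W^{k,4}}\|\psi(s)\|_{H^N}\lesssim G(\cdot)\|\psi\|_{X_T}^2(1+s)^{-d/4},$$
where $G$ is a continuous function of $\|\psi\|_{X_T}$ and $\|1/(1+l_1)\|_{L^\infty_s L^\infty}$.

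Inserting this into the Duhamel formula and using the pointwise linear bound above on each Duhamel increment leads to
$$\|\psi(t)\|_{W^{k,4}}\lesssim \frac{\|\psi_0\|_{W^{k,4/3}}+\|\psi_0\|_{H^N}}{(1+t)^{d/4}} + G(\cdot)\|\psi\|_{X_T}^2 \int_0^t\frac{ds}{(1+t-s)^{d/4}(1+s)^{d/4}}.$$
The remaining convolution integral is $O((1+t)^{-d/4})$ by the elementary lemma $\int_0^t(1+t-s)^{-a}(1+s)^{-a}\,ds\lesssim (1+t)^{-a}$, $a>1$, applied with $a=d/4$. This is exactly the content of the dimensional assumption $d\geq 5$.

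The main obstacle---and the reason dimensions $3$ and $4$ are postponed to sections $5$ and $6$---is precisely this integrability threshold: when $d\in\{3,4\}$ one has $d/4\leq 1$, the convolution lemma fails, and quadratic nonlinearities become critical or supercritical, forcing a normal form and space-time resonance analysis. A secondary, purely technical, point is the book-keeping of derivative counts in the Leibniz estimates so that the single condition $N\geq k+2+d/2$ is enough to close every semilinear and quasilinear contribution in $\mathcal{N}$ simultaneously.
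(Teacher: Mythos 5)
Your proposal is correct and follows essentially the same route as the paper: Duhamel formula, the dispersive estimate of Theorem~\ref{dispersion} combined with Sobolev embedding for the linear flow, product/composition estimates placing the high-derivative factor in $L^2$ and the other in $L^4$, and integrating the resulting decay. The only presentational difference is that you package the two time regimes into a single uniform-in-$\tau$ linear bound and invoke the convolution lemma $\int_0^t(1+t-s)^{-a}(1+s)^{-a}\,ds\lesssim (1+t)^{-a}$ for $a>1$, whereas the paper splits the Duhamel integral explicitly into $[0,t-1]$ and $[t-1,t]$ and treats the two pieces separately; these are the same argument written in two equivalent ways.
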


\begin{proof}
We start with $(\ref{duhamel})$. From the dispersion estimate $(\ref{dispersion})$ and the Sobolev 
embedding, we have for any $t\geq 0$
\begin{equation*}
(1+t)^{d/4}\|e^{itH}\psi_0\|_{W^{2,4}}\lesssim 
(1+t)^{d/4}\min\bigg(\frac{\|U^{(d-2)/4}\psi_0\|_{W^{2,4/3}}}{t^{d/4}},\,
\|\psi_0\|_{H^N}\bigg)
\lesssim \|\psi_0\|_{W^{2,4/3}}+\|\psi_0\|_{H^N}.
\end{equation*}
The only issue is thus to bound the nonlinear part. Let $f,g$ be a placeholder for $l_1$ or 
$U^{-1}\phi_1$, there are several kind of terms : $\n f\cdot \n g$, $(a(1+l_1)-1)\Delta f$,  $2l_1-\widetilde{g}(1+l_1)$, $|\n f|^2$, $|\n g|^2$, $(a(1+l_1)-1)H g$. The estimates for $0\leq t\leq 1$ are easy (it corresponds to the existence of strong solution in finite time), so we assume $t\geq 1$ and 
we split the integral from $(\ref{duhamel})$ between $[0,t-1]$ and $[t-1,t]$. For the first kind 
we have from the dispersion estimate and (\ref{GN3}):
\begin{eqnarray*}
\bigg\|\int_0^{t-1}e^{i(t-s)H}\n f\cdot \n g\,ds\bigg\|_{W^{k,4}}
&\lesssim& \int_0^{t-1}\frac{\|\n f\cdot \n g\|_{W^{k,4/3}}}{(t-s)^{d/4}}ds
\\
&\lesssim&
\int_0^{t-1} \frac{\|\n f\|_{H^{k}}\|\nabla g\|_{W^{k-1,4}}}{(t-s)^{d/4}}ds,\\
&\lesssim&\|\psi\|_{X_t}^2 \displaystyle \int_0^{t-1}
\frac{1}{(t-s)^{d/4}(1+s)^{d/4}}ds\\
&\lesssim& \frac{\|\psi\|_{X_t}^2}{t^{d/4}}.
\end{eqnarray*}
(actually we should also add on the numerator $\|\n f\|_{W^{k-1,4}}\|\nabla g\|_{H^{k}}$, but 
since $f,g$ are symmetric placeholders we omit this term). 
We have used the fact that $\n U^{-1}$ is 
bounded on $W^{1,p}\rightarrow L^p$, 
$1<p<\infty$ so that $\|\n f(s)\|_{H^k}\lesssim \|f\|_{X_t}$ for $s\in [0,t]$, $(1+s)^{d/4}\|\n g\|_{W^{k-1,4}}
\lesssim \|g\|_{X_t}$.\\
For the second part on $[t-1,t]$ we use the Sobolev embedding $H^{d/4}\hookrightarrow L^4$ and 
(\ref{GN3}):
\begin{eqnarray*}
\bigg\|\int_{t-1}^te^{i(t-s)H}(\nabla f\cdot \n g )ds\bigg\|_{W^{k,4}}\lesssim 
\int_{t-1}^t\big\|\n f\cdot\n g\big\|_{H^{k+d/4}} ds&\lesssim& 
 \int_{t-1}^t\|\n f\|_{L^4}\|\n g\|_{H^{k+d/2}}ds\\
&\lesssim& 
\|\psi\|_{X_t}^2 \int_{t-1}^t \frac{1}{(1+s)^{\frac{d}{4}}}ds\\
&\lesssim& \frac{\|\psi\|_{X_t}^2 }{(1+t)^{d/4}}.
\end{eqnarray*}
The terms of the kind $(a(1+l_1)-1)\Delta f$ are estimated similarly: splitting the 
integral over $[0,t-1]$ and $[t-1,t]$,
\begin{eqnarray*}
\bigg\|\int_0^{t-1}e^{i(t-s)H}(a(1+l_1)-1)\Delta fds\bigg\|_{W^{k,4}}
&\lesssim&
\int_0^{t-1} \frac{\|a(1+l_1)-1\|_{W^{k,4}}\|\Delta f\|_{H^k}}{(t-s)^{d/4}}ds
\\
&\lesssim& 
\int_0^{t-1} \frac{\|a(1+l_1)-1\|_{W^{k,4}}\|\n f\|_{H^{k+1}}}{(t-s)^{d/4}}ds.
\end{eqnarray*}
As for the first kind terms, from the composition estimate we deduce that:
$$\|a(1+l_1)-1\|_{W^{k,4}}\lesssim 
F(\|l_1\|_{L^\infty_t(L^\infty)},\|\frac{1}{1+l_1}\|_{L^\infty_t(L^\infty)})\|l_1\|_{W^{k,4}},$$
 with $F$ continuous, we can bound the integral above by $F(\|\psi\|_{X_t},\|\frac{1}{1+l_1}\|_{L_t^\infty(L^\infty)})
 \|\psi\|_X^2/t^{5/4}$. For the integral 
over $[t-1,t]$ we can again do the same computations using 
the composition estimates $ 
\|a(1+l_1)-1\|_{H^{k+d/2}}\lesssim F_1(\|l_1\|_{L^\infty_t(L^\infty)},\|\frac{1}{1+l_1}\|_{L^\infty_t(L^\infty)}) \|l_1\|_{H^{k+d/2}}$ with $F_1$ continuous. The restriction $N\geq k+2+d/2$ comes 
from the fact that we need $\|\Delta f\|_{H^{k+d/2}}\lesssim \|f\|_X$.\\
Writing $2l_1-\widetilde{g}(1+l_1)=l_1(2-\widetilde{g}(l_1)/l_1)$ we see that the estimate for the 
last term is the same as for $(a(1+l_1)-1)\Delta f$ but simpler so we omit it. The other terms can be also handled in a similar way.
\end{proof}

\paragraph{End of the proof of theorem $(\ref{theolarged})$} We fix $k>2+d/4$, $n$ such that 
$2n+1\geq k+2+d/2$, and use these values for $X_T=L^\infty([0,T],H^{2n+1}\cap 
(1+t)^{-d/4}W^{k,4})$. 
First note that since $\mathcal{L}$ is a smooth diffeomorphism near $1$ and $u_0=\n \phi_0$, 
we have
\begin{eqnarray*}
\|u_0\|_{H^{2n}\cap W^{k-1,4/3}}+\|\rho_0-\rho_c\|_{H^{2n+1}\cap W^{k,4/3}}&\sim&
\|(U\phi_0,\mathcal{L}^{-1}(1+l_0)-1)\|_{(H^{2n+1}\cap W^{k,4/3})^2}
\\
&\sim& \|\psi_0\|_{H^{2n+1}\cap W^{k,4/3}},
\end{eqnarray*}
if $\|l_0\|_\infty$ is small enough. In particular we will simply write the smallness 
condition in term of $\psi_0$. Now using the embedding $W^{k,4}\hookrightarrow W^{2,\infty}$, 
the energy estimate of proposition $(\ref{energy})$ implies
\begin{equation*}
\|\psi(t)\|_{H^{2n+1}}\leq \|\psi_0\|_{H^{2n+1}}\text{exp}\bigg(C\int_0^t H(\|\psi\|_{X_s},\|\frac{1}{l+1}\|_{L^\infty})( \|\psi\|_{W^{k,4}}+\|\psi\|_{W^{k-1,4}}^2)  ds
\bigg).
\end{equation*}
Combining it with the decay estimate of proposition $(\ref{decay})$ we get with $G$ and $H$ continuous:
\begin{eqnarray*}
\begin{aligned}
&\|\psi\|_{X_T}\leq C_1\bigg(\|\psi_0\|_{W^{k,4/3}}+\|\psi_0\|_{H^{2n+1}}+\|\psi\|_{X_T}^2 G(\|\psi\|_{X_T},\|\frac{1}{1+l_1}\|_{L^\infty_T(L^\infty)})\\
&+\|\psi_0\|_{H^N}\text{exp}\bigg(C\int_0^T H(\|\psi\|_{X_T},\|\frac{1}{l+1}\|_{L^\infty_T(L^\infty)})( \|\psi\|_{W^{k,4}}+\|\psi\|_{W^{k-1,4}}^2)  ds\\
&\leq C_1\bigg(\|\psi_0\|_{W^{k,4/3}}+\|\psi_0\|_{H^N}+\|\psi\|_{X_T}^2 G(\|\psi\|_{X_T},\|\frac{1}{1+l_1}\|_{L^\infty_T(L^\infty)})\\
&\hspace{3cm}+ \|\psi_{0}\|_{H^{2n+1}}\text{exp}\big(C'\|\psi\|_{X_T} H(\|\psi\|_{X_T},\|\frac{1}{l+1}\|_{L^\infty_T(L^\infty)})\big)\bigg).
\end{aligned}
\end{eqnarray*}
From the usual bootstrap argument, we find that for  
$\|\psi_0\|_{W^{k,4/3}}+\|\psi_0\|_{H^N}\leq \varepsilon$ small enough then for any 
$T>0$, $\|\psi\|_{X_T}\leq 3C_1\varepsilon$ (it suffices to note that for $\varepsilon$ small 
enough, the application $m\mapsto C_1(\varepsilon+\varepsilon e^{C'm}+m^2)$ is smaller than $m$ on 
some interval $[a,b]\subset ]0,\infty[$ with $a\simeq 2C_1\varepsilon$). \\
In particular $\|l\|_\infty\lesssim \varepsilon$ and up to diminishing $\varepsilon$, we have 
$$\|\rho-\rho_c\|_{L^\infty([0,T]\times \R^d)}=\|\mathcal{L}^{-1}(1+l)-\rho_c\|_\infty 
\leq \rho_c/2.$$
This estimate and the $H^{2n+1}$ bound allows to apply the blow-up criterion of \cite{Benzoni1} 
to get global well-posedness.

\section[The case of dimension 3]{The case of dimension d=3,4: normal form, bounds for cubic 
and quartic terms}
In dimension $d=4$ the approach of section $4$ fails, and $d=3$ is even worse. Thus we 
need to study more carefully the structure of the nonlinearity. We start with $(\ref{ekdiag})$, 
that we rewrite in complex form 
\begin{eqnarray}
\nonumber
\partial_t\psi-iH\psi&=&U\big[(a(1+l)-1)\Delta l
-\frac{1}{2}\big(|\nabla \phi|^2-|\nabla l|^2\big)
+(2l-\widetilde{g}(1+l))\big]\\
\nonumber &&+i\big[-\nabla \phi\cdot \nabla l+\big(1-a(1+l)\big)\Delta\phi)\big]\\
\label{GPcomp}&=& U\mathcal{N}_1(\phi,l)+i\mathcal{N}_2(\phi,l)=\mathcal{N}(\psi).
\end{eqnarray}
As explained in the introduction (see \eqref{duhaprofile}), we can rewrite the Duhamel 
formula in term of the profile $e^{-itH}\psi$. In particular, (the Fourier transform of) 
quadratic terms read
\begin{equation}\label{genericquad}
I_{\text{quad}}=
e^{itH(\xi)}\int_0^te^{-is \big(H(\xi)\mp H(\eta)\mp H(\xi-\eta)\big)}B(\eta,\xi-\eta)
\widetilde{\psi^\pm}(\eta)\widetilde{\psi^\pm}(\xi-\eta)d\eta ds,
\end{equation}
where we remind the notation $\widetilde{\psi^\pm}=e^{\mp itH}\widehat{\psi^\pm}$, and $B$ is 
the symbol of a bilinear multiplier. 
For some $\varepsilon>0$ to choose later, $1/p=1/6-\varepsilon$, $T>0$ we set with $N=2n+1$:
\begin{equation}\label{Fspaces}
 \left\{
 \begin{array}{lll}
 \|\psi\|_{Y_T}&=&\|xe^{-itH}\psi\|_{L^\infty_T( L^2})
+\|\ct^{1+3\varepsilon}\psi\|_{L^\infty_T(W^{k,p})},\\
 \|\psi\|_{X(t)}&=&\|\psi(t)\|_{H^N}+\|xe^{-itH}\psi(t)\|_{L^2}
+\|\ct^{1+3\varepsilon}\psi(t)\|_{W^{k,p}},\\
 \|\psi\|_{X_T}&=&\displaystyle \sup_{[0,T)}\|\psi\|_{X(t)}.
\end{array}\right.
\end{equation}
From the embedding $W^{3,p}\subset W^{2,\infty}$, proposition \ref{energy} implies
\begin{equation*}
\|\psi\|_{L^\infty_TH^{2n+1}}\lesssim \|\psi_0\|_{H^{2n+1}}\text{exp}\big(C(\|l\|_{L^\infty},\|\frac{1}{l+1}\|_{L^\infty})(\|\psi\|_{X_T}+\|\psi\|_{X_T}^2)).
\label{energyd3}
\end{equation*}
with $C$ a continuous function. Thus the main difficulty of this section will be to prove
$\displaystyle \|I_{\text{quad}}\|_{Y_T}\lesssim \|\psi\|_{X_T}^2$, uniformly in $T$.
Combined with the energy estimate \eqref{energyd3} and similar (easier) bounds for higher order 
terms, this provides global bounds for $\psi$ which imply global well-posedness.\vspace{2mm}\\
In order to perform such estimates we can use integration by part in  \eqref{genericquad} 
either in $s$ or 
$\eta$ (for the relevance of this procedure, see the discussion on space time resonances in the 
introduction). It is thus essential to study where and at which order we have a cancellation of 
$\Omega_{\pm,\pm}(\xi,\eta)=H(\xi)\pm H(\eta)\pm H(\xi-\eta)$ or $\nabla_\eta \Omega_{\pm\pm}$. 
We will denote abusively $H'(\xi)=\frac{2+2|\xi|^2}{\sqrt{2+|\xi|^2}}$ the radial 
derivative of $H$ and note that $\n H(\xi)=H'(\xi)\xi/|\xi|$, we also point out that 
$H'(r)=\frac{2+2r^2}{\sqrt{2+r^2}}$ is stricly increasing.\\
There are several cases that 
have some similarities with the situation for the Schr\"odinger equation, 
see $(\ref{timeR}\ref{spaceR},\ref{totalR})$ for the definition of the resonant sets $\mathcal{T},
\ \mathcal{S}, \ \mathcal{R}$.
\begin{itemize}
 \item $\Omega_{++}=H(\xi)+H(\eta)+H(\xi-\eta)\gtrsim (|\xi|+|\eta|+|\xi-\eta|)(1+
 |\xi|+|\eta|+|\xi-\eta|)$, the time resonant set is reduced to $\mathcal{T}=\{\xi=\eta=0\}$,
 \item $\Omega_{--}=H(\xi)-H(\eta)-H(\xi-\eta)$, we have $\n_\eta \Omega_{--}=H'(\eta)
 \frac{\eta}{|\eta|}+H'(\xi-\eta)\frac{\eta-\xi}{|\eta-\xi|}$. From basic computations 
 \begin{equation*}
  \n_\eta \Omega_{--}=0\Rightarrow 
    \left\{
  \begin{array}{ll}
  H'(\eta)=H'(\xi-\eta)\\ 
  \frac{\xi-\eta}{|\eta-\xi|}=  \frac{\eta}{|\eta|}
  \end{array}\right.
  \Rightarrow 
  \left\{
  \begin{array}{ll}
   |\eta|=|\xi-\eta|\\
   \xi= 2\eta
  \end{array}
\right.
 \end{equation*}
On the other hand $\Omega_{--}(2\eta,\eta)=H(2\eta)-2H(\eta)=0\Leftrightarrow \eta=0$, thus 
$\mathcal{R}=\{\xi=\eta=0\}$.
\item $\Omega_{-+}=H(\xi)-H(\eta)+H(\xi-\eta)$, from similar computations we find that the 
space-time resonant set is $\mathcal{R}=\mathcal{S}=\{\xi=0\}$. The case $\Omega_{+-}$ is 
symmetric.
\end{itemize}
The fact that the space-time resonant set for $\Omega_{+-}$ is not trivial explains why it is quite 
intricate to bound quadratic terms. An other issue pointed out in \cite{GNT3} for their study of 
the Gross-Pitaevskii equation is that the small frequency ``parallel'' resonances are worse than 
for the nonlinear Schr\"odinger equation. Namely near $\xi=\varepsilon\eta$, $\eta<<1$ we have 
\begin{equation*}
H(\varepsilon\eta)-H(\eta)+H((\varepsilon-1)\eta)\sim \frac{-3\varepsilon |\eta|^3}{2\sqrt{2}}
=\frac{-3|\xi|\,|\eta|^2}{2\sqrt{2}},
\text{ while }|\varepsilon\eta|^2-|\eta|^2+|(1-\varepsilon)\eta|^2\sim -2|\eta|\, |\xi|,
\end{equation*}
we see that integrating by parts in time causes twice more loss of derivatives than prescribed by
Coifman-Meyer's theorem, and there is no hope 
even for $\xi/\Omega$ to belong to any standard class of multipliers. Thus it seems unavoidable 
to use the rough multiplier theorem \ref{singmult}.
\subsection{Normal form}\label{secnorm}
In view of the discussion above, the frequency set $\{(\xi,\eta):\ \xi=0\}$ is expected to 
raise some special difficulty. On the other hand the real part of the nonlinearity in 
$(\ref{GPcomp})$ is better behaved than the imaginary part since 
it has the operator $U(\xi)$ in factor whose cancellation near $\xi=0$ should compensate the 
resonances. In the spirit of $\cite{GNT3}$ we will use a 
normal form in order to have a similar cancellation on the imaginary part.
In order to write the nonlinearity as essentially quadratic we set $a'(1)=\alpha$, and rewrite 
\begin{equation}\label{cub1}
\text{Im}(\mathcal{N})(\psi)=-\alpha l\Delta \phi-\nabla \phi\cdot \nabla l+
\big[\big(1+\alpha l-a(1+l)\big)\Delta \phi\big]=
-\alpha l\Delta \phi-\nabla \phi\cdot \nabla l+R.
\end{equation}
From now on, we will use the notation $R$ as a placeholder for remainder terms that should be at 
least cubic. The detailed analysis of $R$ will be provided in section \ref{estimR}.
At the Fourier level, the quadratic terms $-\alpha l\Delta \phi -\n \phi\cdot \n l$ can be written as follows: 
\begin{equation}
\label{acub1}
-\alpha l\Delta \phi -\n \phi\cdot \n l=-\alpha{\rm div}(l\n\phi)+(\alpha-1)\n\phi\cdot\n l.
\end{equation}
We define the change of variables as $l\rightarrow l-B[\phi,\phi]+B[l,l]$, with $B$ a symmetric 
bilinear multiplier to choose later. We have
\begin{equation}
\begin{aligned}
&  \partial_t\big(-B[\phi,\phi]+ B[l,l]\big)=2B[\phi,(-\Delta+2)l]+2B[-\Delta \phi,l]\\
&\hspace{5cm}+2B\big[\phi,\mathcal{N}_1(\phi,l)\big]+2B\big[\mathcal{N}_2(\phi,l),l\big]\\
&=2B[\phi,(-\Delta+2)l]+2B[-\Delta \phi,l]+R,
\end{aligned}
\label{bcub1}
\end{equation}
where the quadratic terms amount to a bilinear Fourier multiplier $B'[\phi,l]$, with symbol 
$B'(\eta,\xi-\eta)=2B(\eta,\xi-\eta)\big(|\eta|^2+2+|\xi-\eta|^2\big)$. The evolution equation on 
$l_1= l-B(\phi,\phi)+B(l,l)$ is using (\ref{acub1}), (\ref{bcub1})
\begin{eqnarray*}
\p_t l_1+\Delta \phi=B''(\phi,l)-\alpha\text{div}(l\n \phi )+R,\\
B''(\eta,\xi-\eta)=2B(\eta,\xi-\eta)(2+|\eta|^2+|\xi-\eta|^2)+(1-\alpha)\eta\cdot (\xi-\eta).
\end{eqnarray*}
The natural choice  is thus to take (note that if $\alpha=1$ the normal form is just the identity)
\begin{equation*}
B(\eta,\xi-\eta)=\frac{(\alpha-1) \eta\cdot (\xi-\eta)}{2+|\eta|^2+|\xi-\eta|^2}.
\end{equation*}
For this choice, we have then:
\begin{equation}
\p_t l_1+\Delta \phi=-\alpha\text{div}(l\n \phi )+R,\\
\end{equation}
In addition from (\ref{systedepart}) we get:
\begin{equation}
\begin{array}{lll}
\displaystyle \partial_t\phi-\Delta l_1+2l_1&=&-\D b(\phi,l)+2 b(\phi,l) +(a(1+l)-1)\Delta l
-\frac{1}{2}\big(|\nabla \phi|^2-|\nabla l|^2\big)\\
&&+(2l-\widetilde{g}(1+l)),
\end{array}
\label{systedepart2}
\end{equation}
with $l_1=l-B[\phi,\phi]+B[l,l]=l+b(\phi,l)$.
Setting $\phi_1=U\phi$ the system becomes:
\begin{eqnarray*}
 \partial_t\phi_1+Hl_1&=&U\bigg(\alpha \,l\Delta l-
 \frac{1}{2}\big(|\nabla U^{-1}\phi_1|^2-|\nabla l|^2\big)+(-\Delta+2)b(\phi,l)-\widetilde{g}''(1) l^2\bigg)
 +R,\\
\partial_tl_1-H \phi_1&=&-\alpha\text{div}(l\nabla \phi)+R.
\end{eqnarray*}
\paragraph{Final form of the equation}
Finally, if we replace in the quadratic terms $l=l_1-b(\phi,l)$ and set $z=\phi_1+il_1$ 
we obtain 
\begin{eqnarray}
\nonumber 
\partial_tz-iHz&=&U\big(\alpha \,l_1\Delta l_1- \frac{1}{2}\big(|\nabla U^{-1}\phi_1|^2
 -|\nabla l_1|^2-\widetilde{g}''(1) l_1^2\big)+(-\Delta+2)b(\phi,l_1)\big)
 -i\alpha\text{div}(l_1\nabla \phi)\\
\nonumber 
&&+U\big(\alpha(-b(\phi,l)\Delta l_1-l_1\Delta b(\phi,l)+b(\phi,l)\Delta b(\phi,l)-2\nabla b(\phi,l)\cdot \nabla l+|\nabla b(\phi,l)|^2\\
\nonumber &&+(-\Delta+2)(-2B[l_1,b(\phi,l)]+B[b(\phi,l),b(\phi,l)])-\widetilde{g}''(1) (b(\phi,l))^2+2\widetilde{g}''(1) l_1 b(\phi,l) \big)    \big)\\
\nonumber &&\hspace{9cm}+i\alpha\text{div}(b(\phi,l)\nabla \phi)
+R\\
\label{EKnormal}&=&Q(z)+R:=\mathcal{N}_z,
\end{eqnarray}
where $Q(z)$ contains the quadratic terms (the first line), $R$ the cubic and quartic terms. 

\begin{rmq}
It is noticeable that this change of unknown is not singular in term of the new variable 
$\phi_1=U \phi$, indeed $B(\phi,\phi)=\widetilde{B}(\n \phi,\n \phi)$ where 
$\widetilde{B}(\eta,\xi-\eta)=\frac{\alpha-1}{(2+|\eta|^2+|\xi-\eta|^2)}$ is smooth, so that 
$B(\phi,\phi)=\widetilde{B}(\n U^{-1}\phi_1,\n U^{-1}\phi_1)$ acts on $\phi_1$ as a composition 
of smooth bilinear and linear multipliers.
\end{rmq}

It remains to check that the normal form is well defined in our functional framework.
We shall also prove that is cancels asymptotically.

\begin{prop}\label{estimformenormale}
For $N>4$, $k\geq 2$, the map $\phi_1+il\mapsto z:=\phi_1+i(l+b(\phi,l))$ is bi-Lipschitz on the 
neighbourhood of $0$ in $X_\infty$, 
Moreover, $\psi=\phi_1+i l$ and $z$ have the same asymptotic as $t\rightarrow \infty$:
\begin{equation*}
\|\psi-z\|_{X(t)}=O(t^{-1/2}).
\end{equation*}

\end{prop}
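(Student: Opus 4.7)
The map takes the form $\Phi(\psi)=\psi+iQ[\psi]$, where $Q[\psi]:=-B[U^{-1}\phi_1,U^{-1}\phi_1]+B[l,l]$ depends quadratically on $\psi=\phi_1+il$; by the remark following the derivation of the normal form, the composition with $U^{-1}$ is smooth, so $Q$ is a bilinear Fourier multiplier with Coifman--Meyer-type symbol. Both conclusions reduce to the single bilinear bound
\[
\|Q[\psi]\|_{X(t)}\lesssim \ct^{-1/2}\|\psi\|_{X_\infty}^2,\qquad t\geq 0,
\]
together with its polarised version. Granting this, the asymptotic $\|\psi-z\|_{X(t)}=O(t^{-1/2})$ is immediate, while bi-Lipschitzness follows from the usual contraction argument in $X_\infty$: on a sufficiently small ball the map $\psi\mapsto z-iQ[\psi]$ is contracting uniformly in the parameter $z$, producing a Lipschitz inverse.

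The $H^N$ and $W^{k,p}$ components are handled by $(\ref{GN2})$, Coifman--Meyer and the Sobolev embedding $W^{k,p}\hookrightarrow L^\infty$ (valid since $k\geq 2$ and $1/p=1/6-\ve$): both $\|Q[\psi]\|_{H^N}$ and $\ct^{1+3\ve}\|Q[\psi]\|_{W^{k,p}}$ are bounded by $\ct^{-(1+3\ve)}\|\psi\|_{X_\infty}^2$, comfortably stronger than $\ct^{-1/2}\|\psi\|_{X_\infty}^2$.

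The delicate part is the weighted-$L^2$ component $\|xe^{-itH}Q[\psi]\|_{L^2}$. Decomposing into profiles $\widetilde{\psi^\pm}=e^{\mp itH}\widehat{\psi^\pm}$ and translating $x=i\n_\xi$ onto the Fourier side, one has to bound in $L^2_\xi$ the derivative $\n_\xi$ of
\[
\int m(\eta,\xi-\eta)\,e^{it\Phi_{\pm\pm}(\xi,\eta)}\,\widetilde{\psi^\pm}(\eta)\,\widetilde{\psi^\pm}(\xi-\eta)\,d\eta,\qquad \Phi_{\pm\pm}=H(\xi)\mp H(\eta)\mp H(\xi-\eta).
\]
When $\n_\xi$ falls on the symbol or on $\widetilde{\psi^\pm}(\xi-\eta)$, the resulting bilinear operator is controlled by $\|\psi\|_{L^\infty}\|xe^{-itH}\psi\|_{L^2}\lesssim \ct^{-(1+3\ve)}\|\psi\|_{X_\infty}^2$ through Coifman--Meyer, using the identity $\n_\xi\widetilde{\psi^\pm}=\F(-ixe^{-itH}\psi^\pm)$. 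When $\n_\xi$ hits the phase, we pick up the potentially growing factor $t\,\n_\xi\Phi_{\pm\pm}$; I would absorb it by integration by parts in $\eta$ via $e^{it\Phi}=\n_\eta\Phi\cdot \n_\eta e^{it\Phi}/(it|\n_\eta\Phi|^2)$, so that the gain $1/t$ cancels the loss $t$.

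The hard step is carrying out this integration by parts near the space-resonant set $\{\n_\eta\Phi_{\pm\pm}=0\}$. From the analysis of the phases opening this section, for each sign choice the full space-time resonant set $\mathcal{R}$ is contained in either $\{\xi=\eta=0\}$ or the hyperplane $\{\xi=0\}$, and on those loci the multiplier $m$ inherits from $B$ a factor $\eta\cdot(\xi-\eta)$ which vanishes; combined with the rough multiplier theorem \ref{singmult} to handle the resulting non-smooth symbols, this closes the estimate. This is precisely the space-time resonance machinery deployed in detail for the Duhamel quadratic terms in section 6, and constitutes the main obstacle to the proof.
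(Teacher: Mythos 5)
Your overall structure (reduce to the bilinear bound $\|Q[\psi]\|_{X(t)}\lesssim \ct^{-1/2}\|\psi\|_{X_\infty}^2$, then conclude bi-Lipschitzness by a fixed-point argument) matches the paper, and your $H^N$/$W^{k,p}$ estimates are essentially right. However, your treatment of the weighted $L^2$ component is both more complicated than the paper's and not actually carried out.

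The key point you missed is that the term where $\nabla_\xi$ hits the phase does \emph{not} require any integration by parts in $\eta$, nor any space-time resonance analysis. The paper's argument is a one-liner: $\nabla_\xi\Omega_{\pm\pm}=\nabla H(\xi)\mp\nabla H(\xi-\eta)$ is a pair of order-one multipliers, so after Coifman--Meyer the contribution is $t\|\psi\|_{W^{1,3}}\|\psi\|_{W^{1,6}}\lesssim t\cdot\ct^{-1/2}\cdot\ct^{-1}\|\psi\|_{X(t)}^2=\ct^{-1/2}\|\psi\|_{X(t)}^2$, already meeting the claimed rate. The point is that $B[\phi,\phi]$ appears here as a single time-slice (no Duhamel integral), so a loss of one power of $t$ is harmless when the raw $L^3\times L^6$ decay is $\ct^{-3/2}$. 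You conflated this with the genuinely hard setting of section 6, where the Duhamel $\int_0^t ds$ forces one to gain decay without any loss. Invoking the full space-time resonance machinery for this proposition is overkill, and you explicitly declared it "the main obstacle" without doing the computation.

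Two further inaccuracies: (i) For IBP in $\eta$ the singularity is governed by the \emph{space}-resonant set $\mathcal{S}=\{\nabla_\eta\Phi=0\}$, not $\mathcal{R}$; for $\Omega_{--}$ this is $\{\xi=2\eta\}$, not the hyperplane $\{\xi=0\}$, so your geometric description is off for some sign choices. (ii) Your claim that both the "derivative on symbol" and "derivative on profile" terms decay at rate $\ct^{-(1+3\varepsilon)}$ overstates what holds: when $\nabla_\xi$ hits the symbol it can add a singular factor $U^{-1}(\xi-\eta)$ (destroying the Coifman--Meyer property), and the paper handles this via $\dot{H}^1\hookrightarrow L^6$, obtaining only $\ct^{-1/2}$ for that piece. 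This does not affect the conclusion, but it shows the symbol is not uniformly Coifman--Meyer after differentiation, a detail your argument glossed over.
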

\begin{proof}
The terms $B[\phi,\phi]$ and $B[l,l]$ are handled in a similar way, we only treat the first case 
which is a bit more involved as we have the singular relation $\phi=U^{-1}\phi_1$.
Note that $B[\phi,\phi]=\widetilde{B}(\n \phi,\n \phi)$, with $\widetilde{B}[\eta,\xi-\eta]=
(\alpha-1)\frac{1}{2+|\eta|^2+|\xi-\eta|^2}$, and $\n U^{-1}=\la \n \ra \circ R_i$ so there is 
no real issue as long as we avoid the $L^\infty$ space.
Also,we split $B=B\chi_{|\eta|\gtrsim|\xi-\eta|}+B(1-\chi_{|\eta|\gtrsim|\xi-\eta|})$ where 
$\chi$ is smooth outside $\eta=\xi=0$, homogeneous of degree $0$, equal to $1$ near 
$\{|\xi-\eta|=0\}\cap \mathbb{S}^{2d-1}$ and $0$ near $\{|\eta|=0\}\cap \mathbb{S}^{2d-1}$. As can 
be seen from the change of variables $\zeta=\xi-\eta$, these terms are symmetric so we can simply 
consider the first case.\\
By interpolation, we have:
\begin{equation}
\forall\,2\leq q\leq p,\ \|\psi\|_{W^{k,q}}\lesssim \|\psi\|_{X(t)}/\ct^{3(1/2-1/q)}.
\label{techj}
\end{equation}
For the $H^N$ estimate we have 
from the Coifman-Meyer theorem (since the symbol $\widetilde{B}$ has the form $\frac{1}{2+|\eta|^2+|\xi-\eta|^2}$), the embedding $H^1\mapsto L^3$ and the boundedness of the Riesz 
multiplier, 
\begin{equation*}
\|B[U^{-1}\phi_1,U^{-1}\phi_1]\|_{H^N}
\lesssim \big\|\n U^{-1}
\phi_1\big\|_{W^{N-2,3}} \big\|\n U^{-1}\phi_1\big\|_{L^{6}}\lesssim \|\phi_1\|_{X(t)}^2/\ct.
\end{equation*}
For the weighted estimate $\|xe^{-itH}B[\phi,\phi]\|_{L^2}$, since $\phi=U^{-1}(\psi
+\overline{\psi})/2$, we have a collection of terms that read in the Fourier variable:
\begin{eqnarray*}
\mathcal{F}\big( xe^{-itH}B[U^{-1}\psi^\pm,U^{-1}\psi^\pm]\big)=
\n_\xi \int e^{-it\Omega_{\pm\pm}}B_1(\eta,\xi-\eta)
\tilde{\psi}^\pm(\eta)\tilde{\psi}^{\pm}(\xi-\eta)d\eta,\\
 \text{ where }B_1=\frac{\eta U^{-1}(\eta)\cdot (\xi-\eta)
 U^{-1}(\xi-\eta)}{2+|\eta|^2+|\xi-\eta|^2}\chi_{|\eta|\gtrsim|\xi-\eta|},\ 
 \Omega_{\pm\pm}=-H(\xi)\mp H(\eta)\mp H(\xi-\eta).
\end{eqnarray*}
If the derivative hits $B_1$, in the worst case it adds a singular term $U^{-1}(\xi-\eta)$, so 
that from the embedding $\dot{H}^1\hookrightarrow L^6$
\begin{eqnarray*}
\bigg\|\int e^{-it\Omega_{\pm\pm}}(\n_\xi B_1)
 \tilde{\psi}^\pm(\eta)\tilde{\psi}^{\pm}(\xi-\eta)d\eta\bigg\|_{L^2} 
=\big\|\n_\xi B_1[\psi^\pm,\psi^{\pm}]\big\|_{L^2} &\lesssim& \|U^{-1}\psi\|_{W^{1,6}}
 \|\psi\|_{W^{1,3}}\\
 &\lesssim&\|\psi\|_{X(t)}^2/\ct^{1/2}.
\end{eqnarray*}
If the derivative hits $\widetilde{\psi}^\pm(\xi-\eta)$ we use the fact that the symbol 
$\frac{\la \xi-\eta\ra^2\chi_{|\eta|\gtrsim |\xi-\eta|}}{2+|\eta|^2+|\xi-\eta|^2}$ is of 
Coifman-Meyer type
\begin{eqnarray*}
\bigg\|\int e^{it\Omega_{\pm\pm}}B_1(\eta,\xi-\eta)
 \tilde{\psi}^\pm(\eta)\n_\xi \tilde{\psi}^{\pm}(\xi-\eta)d\eta\bigg\|_{L^2} 
 &\lesssim& 
\|\la\n\ra\psi \|_{L^{6}}\|\la \n\ra ^{-2}\la\n\ra e^{itH}xe^{-itH}\psi\|_{L^3}\\
 &\lesssim& \|\psi\|_{X(t)}^2/\ct.
\end{eqnarray*}
Finally, if the derivative hits $e^{-it\Omega_{\pm\pm}}$ we note that $\nabla_\xi\Omega_{\pm\pm}
=\nabla_\xi H(\xi)\mp \nabla_\xi H(\xi-\eta)$, where both term are multipliers of order $1$ so
\begin{eqnarray*}
\bigg\|\int e^{it\Omega_{\pm\pm}}it(\n_\xi\Omega_{\pm\pm})B_1
 \tilde{\psi}^\pm(\eta)\tilde{\psi}^{\pm}(\xi-\eta)d\eta\bigg\|_{L^2} 
 &\lesssim &t\|\psi\|_{W^{1,3}}\|\psi\|_{W^{1,6}}\\
 &\lesssim& \|\psi\|_{X(t)}^2/\ct^{1/2}.
\end{eqnarray*}
The $W^{k,p}$ norm is also estimated using the Coifman-Meyer theorem and the boundedness 
of the Riesz multipliers:
\begin{eqnarray*}
\|B_1[\psi^\pm(t),\psi^\pm(t)]\|_{W^{k,p}}\lesssim \|\psi\|_{W^{k-1,1/12-\varepsilon/2}}^2
\lesssim \|\psi\|_{W^{k,1/6-\varepsilon}}^2\lesssim \frac{\|\psi\|_{X(t)}^2}{\ct^{2+6\varepsilon}}.
\end{eqnarray*}
Gluing all the estimates we have proved 
\begin{equation*}
 \|B[U^{-1}\psi,U^{-1}\psi]\|_{X(t)}^2\lesssim \|\psi\|_{X(t)}^2/\ct^{1/2},
 \|B[U^{-1}\psi,U^{-1}\psi]\|_X^2\lesssim \|\psi\|_X^2,
\end{equation*}
thus using the second estimate we obtain from a fixed point argument that the map 
$\phi_1+il\mapsto \phi_1+i(l-B[\phi,\phi]+B[l,l])$
defines a diffeomorphism on a neighbourhood of $0$ in $X$. The first estimate proves the second 
part of the proposition.
\end{proof}
With similar arguments, we can also obtain the following:
\begin{prop}
Let $z_0=U\phi_0+i(l_0-B[\phi_0,\phi_0]+B[l_0,l_0])$, the smallness condition of theorem 
$(\ref{theod4})$ is equivalent to the smallness of 
$\|z_0\|_{H^{2n+1}}+\|xz_0\|_{L^2}+\|z_0\|_{W^{k,p}}$.
\end{prop}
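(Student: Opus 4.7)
The proof amounts to checking that each of the three operations linking the physical data $(\rho_0-\rho_c, u_0=\nabla\phi_0)$ to $z_0$ is bi-Lipschitz on a neighborhood of $0$ in the relevant norms. Those operations are: (i) the pointwise substitution $\rho_0\mapsto l_0=\mathcal{L}(\rho_0)-1$; (ii) the linear change $\phi_0\mapsto\phi_1=U\phi_0$; and (iii) the quadratic normal form $l_0\mapsto l_0-B[\phi_0,\phi_0]+B[l_0,l_0]$. I will estimate each in the three norms $H^{2n+1}$, $L^2(\langle x\rangle^2)$ and $W^{k,p'}$ (for the initial data dual index) and show that the quadratic piece is $O(\delta^2)$, hence negligible for small $\delta$.

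For (i), since $\mathcal{L}$ is a smooth diffeomorphism near $\rho_c$ with $\mathcal{L}(\rho_c)=1$, the composition estimate \eqref{compo} immediately gives $\|l_0\|_{H^{2n+1}}\simeq\|\rho_0-\rho_c\|_{H^{2n+1}}$ and $\|l_0\|_{W^{k,p'}}\simeq\|\rho_0-\rho_c\|_{W^{k,p'}}$, while the weighted norms are compared by writing $l_0=\int_0^1 \mathcal{L}'(\rho_c+s(\rho_0-\rho_c))ds\cdot(\rho_0-\rho_c)$ and using that $\mathcal{L}'$ is bounded. For (ii), note that $u_0=\nabla\phi_0$ yields the relation
\begin{equation*}
\widehat{U\phi_0}(\xi)=\frac{-i\xi\cdot\widehat{u_0}(\xi)}{|\xi|\sqrt{2+|\xi|^2}},
\end{equation*}
so that $U\phi_0=(2-\Delta)^{-1/2}\,\mathrm{Rdiv}\,u_0$, where the Riesz–divergence operator is $L^q$-bounded for $1<q<\infty$ and $(2-\Delta)^{-1/2}$ is a smoothing of order one. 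This gives $\|U\phi_0\|_{H^{2n+1}}\lesssim\|u_0\|_{H^{2n}}$ and $\|U\phi_0\|_{W^{k,p'}}\lesssim\|u_0\|_{W^{k-1,p'}}$. Conversely $u_0=\nabla\phi_0=(\nabla/|\nabla|)\sqrt{2-\Delta}\,U\phi_0$ is a Riesz transform composed with the order-one Fourier multiplier $\sqrt{2-\Delta}$, so the inverse bounds hold with one derivative gain on the Sobolev index.

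For (iii), the needed estimates are exactly those established inside the proof of Proposition \ref{estimformenormale}, applied at $t=0$ (so the weighted bound is trivially $\|x\psi_0\|_{L^2}$ replacing $\|xe^{-itH}\psi\|_{L^2}$). The key point is that $B(\eta,\xi-\eta)=(\alpha-1)\eta\cdot(\xi-\eta)/(2+|\eta|^2+|\xi-\eta|^2)$ is a Coifman–Meyer symbol, which combined with the Sobolev embedding $H^1\hookrightarrow L^6$, the boundedness of Riesz transforms, and product rules gives
\begin{equation*}
\|B[\phi_0,\phi_0]\|_{H^{2n+1}\cap W^{k,p'}}\lesssim\|\nabla\phi_0\|_{H^{2n}\cap W^{k-1,p'}}^{2},
\end{equation*}
and similarly for $B[l_0,l_0]$. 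For $xB[\phi_0,\phi_0]$ one differentiates in $\xi$ under the bilinear integral: when the derivative hits $B$ one gains a factor compensating the quadratic symbol, and when it hits $\widehat{\phi_0^\pm}$ one absorbs the $x$ into the weighted norm of one factor while estimating the other in a dispersive/Sobolev space. Each contribution is $O(\delta\cdot\|x\phi_0\|_{L^2})$, hence negligible.

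The only genuine obstacle is the weighted term: commuting $x$ through $U\circ\nabla^{-1}$, whose symbol $i\xi/(|\xi|\sqrt{2+|\xi|^2})$ is smooth except at the origin, requires some care. However $\nabla_\xi[i\xi/(|\xi|\sqrt{2+|\xi|^2})]$ is homogeneous of degree $-1$ at low frequency and decays at high frequency, so it defines a Calderón–Zygmund type operator, and one obtains $\|x\,U\phi_0\|_{L^2}\lesssim\|xu_0\|_{L^2}+\|u_0\|_{L^2}$, with the reverse bound analogous. Combining (i)--(iii) and the invertibility of the normal form from Proposition \ref{estimformenormale} yields the claimed equivalence for $\delta$ sufficiently small.
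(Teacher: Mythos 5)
The three-step decomposition (composition by $\mathcal{L}$, linear substitution $\phi_0 \mapsto U\phi_0$, bilinear normal form) is the right structure and is consistent with the paper's indication that the result follows by "similar arguments" to Proposition~\ref{estimformenormale}. However, your treatment of the weighted norm of $U\phi_0$ contains a genuine error. You claim that $\nabla_\xi\big[i\xi/(|\xi|\sqrt{2+|\xi|^2})\big]$, being homogeneous of degree $-1$ near the origin, "defines a Calder\'on--Zygmund type operator." This is false: a Calder\'on--Zygmund operator corresponds to a symbol of degree $0$, whereas a symbol that behaves like $|\xi|^{-1}$ near zero (e.g.\ $|\nabla|^{-1}$) is \emph{not} $L^2$-bounded. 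The term $(\nabla_\xi n)(\xi)\cdot\widehat{u_0}(\xi)$ with $n(\xi)=-i\xi/(|\xi|\sqrt{2+|\xi|^2})$ is in general not controlled by $\|u_0\|_{L^2}$.

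The correct argument exploits irrotationality, not a Calder\'on--Zygmund bound. Since $u_0=\nabla\phi_0$, one has $\widehat{u_0}(\xi)=\hat\xi\,(\hat\xi\cdot\widehat{u_0}(\xi))$, and hence
\begin{equation*}
\widehat{U\phi_0}(\xi)=n(\xi)\cdot\widehat{u_0}(\xi)=\big(n(\xi)\cdot\hat\xi\big)\,\big(\hat\xi\cdot\widehat{u_0}(\xi)\big)=\frac{-i}{\sqrt{2+|\xi|^2}}\,\hat\xi\cdot\widehat{u_0}(\xi).
\end{equation*}
When $\partial_{\xi_j}$ is applied, the only candidate for a $|\xi|^{-1}$ singularity is $(\partial_{\xi_j}\hat\xi)\cdot\widehat{u_0}$; but $\partial_{\xi_j}\hat\xi$ is orthogonal to $\hat\xi$ (since $|\hat\xi|\equiv 1$), while $\widehat{u_0}$ is parallel to $\hat\xi$, so this term vanishes identically. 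The remaining contributions come from $\partial_{\xi_j}(2+|\xi|^2)^{-1/2}$, which is a bounded symbol, and from $\hat\xi\cdot\partial_{\xi_j}\widehat{u_0}$, giving the claimed bound $\|xU\phi_0\|_{L^2}\lesssim\|xu_0\|_{L^2}+\|u_0\|_{L^2}$ without invoking any singular integral theory.

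Finally, your statement that "the reverse bound [is] analogous" is too quick. Writing $\widehat{u_0}=i\hat\xi\sqrt{2+|\xi|^2}\,\widehat{\phi_1}$, the symbol is of order $1$ at high frequency, so $\partial_{\xi_j}\widehat{u_0}$ contains $\hat\xi\sqrt{2+|\xi|^2}\,\widehat{x_j\phi_1}$, i.e.\ the weighted norm of $\la\nabla\ra\phi_1$ appears, which is \emph{not} directly dominated by $\|xz_0\|_{L^2}$. The two directions are therefore not symmetric, and if you want a genuine two-sided statement you need to address this derivative mismatch explicitly (e.g.\ by interpolation with the $H^{2n+1}$ norm, or by checking that the forward direction is all that is used in the bootstrap and re-reading the word "equivalent" in the light of the argument actually needed).
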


\subsection{Bounds for cubic and quartic nonlinearities}\label{estimR}
Let us first collect the list of terms in $R$ (see $(\ref{cub1}), (\ref{bcub1}), (\ref{EKnormal})$ ) with $b=b(\phi,l)$:
\begin{eqnarray*}
\begin{aligned}
&(1+\alpha l-(a(1+l))\Delta \phi,\ B[\phi,\mathcal{N}_1(\phi,l)],\ B[\mathcal{N}_2(\phi,l),l],\
i\alpha\text{div}(b\nabla \phi),\\
&U\big(\alpha(-b\Delta l_1-l_1\Delta b+b\Delta b-2\nabla b\cdot \nabla l+|\nabla b|^2
(-\Delta+2)b(\phi,-b)-2B[l_1,b]+B[b,b]\big).
\end{aligned}
\end{eqnarray*}
We note that they are all either cubic (for example $B[\phi,|\nabla \phi|^2]$) or quartic (for 
example $B[b,b]$). $B$ is a smooth bilinear multiplier and as we already pointed out, 
$\phi$ always appears with a gradient, we can replace everywhere $\phi$ by $\phi_1=U\phi$ up to 
the addition of Riesz multipliers. \\
Since the estimates are relatively straighforward, we only detail the case of the cubic 
term $B[\phi,|\nabla \phi|^2]$ which comes from $B[\phi,\mathcal{N}_1(\phi)]$ 
(quartic terms are simpler). Since $\phi=U^{-1}(\psi+\overline{\psi})/2$ we are 
reduced to bound in $Y_T$ (see \ref{Fspaces}) terms of the form
\begin{equation*}
I(t)=\int_0^te^{i(t-s)H}B[U^{-1}\psi^\pm,|U^{-1}\nabla \psi^\pm|^2]ds.
\end{equation*}
\begin{prop}
For any $T>0$, we have the a priori estimate 
\begin{equation*}
\sup_{[0,T]}\|I(t)\|_{Y_T}\lesssim \|\psi\|_{X_T}^3.
\end{equation*}

\end{prop}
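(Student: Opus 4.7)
The plan is to bound $I$ in each of the two pieces of the $Y_T$ norm separately: the dispersive bound $\sup_t \ct^{1+3\varepsilon}\|I(t)\|_{W^{k,p}}$ and the weighted $L^2$ bound $\sup_t \|xe^{-itH}I(t)\|_{L^2}$. For both I split the Duhamel integral as $\int_0^t = \int_0^{t-1}+\int_{t-1}^t$, treating the close-to-$t$ regime by Sobolev embedding and the far regime by the dispersive estimate of Theorem \ref{dispersion}.

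For the $W^{k,p}$ piece, on $[t-1,t]$ I use the embedding $H^{k+1+3\varepsilon}\hookrightarrow W^{k,p}$ (valid in $\R^3$, and $k+1+3\varepsilon\le N=2n+1$ for $n$ large), bounding the trilinear integrand by Coifman-Meyer and Gagliardo-Nirenberg product rules as $\|\psi\|_{W^{k,p}}^2\|\psi\|_{H^N}\lesssim \|\psi\|_{X_T}^3/\cs^{2(1+3\varepsilon)}$ and noting that the operators $\nabla U^{-1}$, $U^{-1}\nabla$ are bounded multipliers of order zero composed with a Riesz transform. On $[0,t-1]$ the dispersive estimate yields decay $(t-s)^{-(1+3\varepsilon)}$ with the favourable low-frequency gain $U^{(d-2)(1/2-1/p)}=U^{1/3+\varepsilon}$; the integrand in $W^{k,p'}$ is bounded by H\"older, interpolating each $\psi$ between $L^p$ and $L^2$ (at, say, $L^{18/5}$) to yield the total decay $\cs^{-2(1+3\varepsilon)}$. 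The convolution $\int_0^{t-1}(t-s)^{-(1+3\varepsilon)}\cs^{-2(1+3\varepsilon)}\,ds\lesssim \ct^{-(1+3\varepsilon)}$ then closes this piece.

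For the weighted $L^2$ piece, write $xe^{-itH}I(t)=\int_0^t xe^{-isH}F(s)\,ds$ with $F=B[U^{-1}\psi^\pm,|U^{-1}\nabla\psi^\pm|^2]$ and pass to Fourier, where $xe^{-isH}$ becomes $i\nabla_\xi\circ e^{-isH(\xi)}$. Writing $F$ as a trilinear integral in the profiles $\widetilde{\psi^\pm}$, the gradient $\nabla_\xi$ distributes according to three cases: (i) it falls on one of the profile factors $\widetilde{\psi^\pm}(\xi-\eta_1-\eta_2)$ and converts it into $\widehat{xe^{-isH}\psi^\pm}$, which is in $L^2$ by the very definition of $X_T$, the remaining two factors being placed in $L^p$-type spaces with integrable decay; (ii) it falls on the multiplier symbol, where the rough multiplier theorem \ref{singmult} handles the ensuing low-frequency singularity; (iii) it falls on the phase $e^{-isH(\xi)}$, producing an $is\n_\xi H(\xi)$ factor whose linear growth in $s$ is absorbed by the strong decay $\|\psi(s)\|_{L^\infty}^2\lesssim\|\psi\|_{X_T}^2/\cs^{2(1+3\varepsilon)}$ (using $W^{k,p}\hookrightarrow L^\infty$, since $k>2+3/4$), keeping the $s$-integral convergent.

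The main obstacle is the symbol analysis: the factor $U^{-1}(\eta)$ in $B[U^{-1}\cdot,\cdot,\cdot]$ has a $|\eta|^{-1}$ singularity, and the numerator $\eta\cdot(\xi-\eta)$ of $B$ only partly cancels it, leaving a Riesz-type non-smooth multiplier in the first variable; moreover, when $\nabla_\xi$ hits the symbol in the weighted estimate, additional singularities appear. These non-smooth pieces fall outside the Coifman-Meyer class and must be handled through the rough multiplier theorem \ref{singmult}, combined with the low-frequency gain $U^{(d-2)(1/2-1/p)}$ granted by the dispersive estimate.
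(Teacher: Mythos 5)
Your overall plan coincides with the paper's: both pieces of the $Y_T$ norm are treated separately, the $W^{k,p}$ decay is obtained by splitting $\int_0^t=\int_0^{t-1}+\int_{t-1}^t$ (dispersion far from $t$, Sobolev embedding near $t$), and the weighted bound is obtained by applying $\nabla_\xi$ to the Fourier side of $xe^{-itH}I$ and distributing it over the symbol, the phase, and the profiles. The dispersive half is fine.

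The weighted half, however, contains a genuine gap. When $\nabla_\xi$ converts a profile into $\widehat{xe^{-isH}\psi}$, you implicitly need that factor alone to absorb the weight; but in the trilinear expression $B[U^{-1}\psi^\pm,(U^{-1}\nabla\psi^\pm)^2]$ the output variable $\xi$ is paired either with the factor $(U^{-1}\nabla\psi^\pm)^2$ (the paper's choice) or, after a change of variable, with $U^{-1}\psi^\pm$ whose symbol is combined with $\eta\cdot(\xi-\eta)/(2+|\eta|^2+|\xi-\eta|^2)$ to produce a first-order operator at high frequency. Either way you end up needing to estimate something like $\|x(\nabla U^{-1}\psi^\pm)^2\|_{L^{3/2}}$, and the $X_T$ norm only controls $\|xe^{-itH}\psi\|_{L^2}$, not a derivative of it. The paper resolves this precisely by a Littlewood--Paley paraproduct decomposition: writing $(\nabla U^{-1}\psi)^2=\sum (\nabla U^{-1}S_j\psi)(\nabla U^{-1}\Delta_j\psi)+\ldots$, placing the weight on the low-frequency paraproduct piece (where the derivative costs nothing) and the derivative on the high-frequency piece (which enjoys time decay via $\|\psi\|_{W^{3,6}}$), together with commutator bounds for $[x,\partial_kU^{-1}S_j]$. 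Your ``the remaining two factors being placed in $L^p$-type spaces with integrable decay'' skips this step, and without it the estimate cannot close.

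A second, smaller inaccuracy: you invoke the rough multiplier theorem \ref{singmult} to handle $U^{-1}(\eta)\eta\cdot(\xi-\eta)/(2+|\eta|^2+|\xi-\eta|^2)$, but $U^{-1}(\eta)\eta=\sqrt{2+|\eta|^2}\,\eta/|\eta|$ is just a first-order smooth multiplier composed with a Riesz transform, and the paper handles all multipliers in this cubic estimate purely by Coifman--Meyer plus the $L^p$-boundedness of Riesz transforms. The rough multiplier theorem (with its loss of $d/2$ derivatives) is only needed in the analysis of the quadratic terms in Section $6$, where the symbols $1/\Omega$ and $\nabla_\eta\Omega/|\nabla_\eta\Omega|^2$ really do fall outside the Coifman--Meyer class; using it here would introduce an unnecessary and in fact harmful loss.
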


\begin{proof}\textbf{The weighted bound}\\[2mm]
First let us write 
\begin{eqnarray*}
\begin{aligned}
&xe^{-itH}I(t)=\int_0^te^{-isH}\bigg((-is\nabla_\xi H)B[U^{-1}\psi^\pm,(U^{-1}\nabla \psi^\pm)^2]
+B[U^{-1}\psi^\pm,x(U^{-1}\nabla \psi^\pm)^2]\\
&\hspace{3cm}+\n_\xi B[U^{-1}\psi^\pm,(U^{-1}\nabla \psi^\pm)^2] \bigg)ds,\\
&= I_1(t)+I_2(t)+I_3(t).
\end{aligned}
\end{eqnarray*}
Taking the $L^2$ norm and using the Strichartz estimate with $(p',q')=(2,6/5)$ we get
\begin{eqnarray*}
\|I_1\|_{L^\infty_TL^2}&\lesssim& \|(s\nabla_\xi H)B[U^{-1}\psi^\pm,(U^{-1}\nabla \psi^\pm)^2]\|
_{L^2(L^{6/5})}\\
&\lesssim& \|sB[U^{-1}\psi^\pm,(U^{-1}\nabla \psi^\pm)^2]\|_{L^2(W^{1,6/5})},\\
\|I_2\|_{L^\infty_TL^2}&\lesssim& \|B[U^{-1}\psi^\pm,x(U^{-1}\nabla \psi^\pm)^2]\|
_{L^2(L^{6/5})}.
\end{eqnarray*}
We have then from Coifman-Meyer's theorem, H\"older's inequality, continuity of the Riez operator and (\ref{techj})
\begin{equation}
\label{weightloss}
\begin{aligned}
&\|sB[U^{-1}\psi^\pm,(U^{-1}\nabla \psi^\pm)^2]\|_{L^2_T(W^{1,6/5})}\lesssim 
\big\|s\|\psi\|_{W^{2,6}}^2\|\psi\|_{H^2}\big\|_{L^2_T}\lesssim \|\psi\|_{X_T}^3,\\
&\|I_2\|_{L^\infty_T(L^2)}\lesssim \big\|\|\psi\|_{W^{1,6}}\|x(\nabla 
U^{-1}\psi^\pm)^2\|_{L^{\frac{3}{2}}}\big\|_{L^2_T}.
\end{aligned}
\end{equation}
The loss of derivatives in $I_2$ can be controlled thanks to a paraproduct: let 
$(\chi_j)_{j\geq 0}$ with $\sum \chi_j(\xi)=1$, $\text{supp}(\chi_0)\subset B(0,2),\ 
\text{supp}(\chi_j)\subset \{2^{j-1}\leq \xi \leq 2^{j+1}\},\ j\geq 1$, and set 
$\widehat{\Delta_j\psi}:=\chi_j\widehat{\psi}$, $S_j\psi=\sum_0^j\Delta_k\psi$. Then 
\begin{equation*}
(U^{-1}\nabla \psi^\pm)^2=\sum_{j\geq 0}(\nabla U^{-1}S_j\psi^\pm)(\nabla U^{-1}\Delta_j\psi^\pm)
+\sum_{j\geq 1}(\nabla U^{-1} S_{j-1}\psi^\pm) (\nabla U^{-1}\Delta_j\psi^\pm)
\end{equation*}
For any term of the first scalar product we have
\begin{eqnarray*}
x\big((\partial_kU^{-1}S_j\psi^\pm)(\partial_kU^{-1}\Delta_j\psi^\pm)\big)&=&
(\partial_kU^{-1}S_jx\psi^\pm)(\partial_k U^{-1}\Delta_j \psi^\pm)\\
&&+([x,\partial_kU^{-1}S_j]\psi^\pm) (\partial_kU^{-1}\Delta_j\psi^\pm).
\end{eqnarray*}
From H\"older's inequality, standard commutator estimates, the Besov embedding 
$W^{3,6}\hookrightarrow B^{2}_{6,1}$ and $(\ref{5.5})$ we get 
\begin{eqnarray}
\label{para1}
\sum_j\|(\partial_kU^{-1}S_jx\psi^\pm)(\partial_k U^{-1}\Delta_j \psi^\pm)\|_{L^{3/2}}
\lesssim \sum_j2^j\|x\psi \|_{L^{2}} 2^j\|\Delta_j\psi\|_{L^6}\lesssim \|x\psi\|_{L^2}
\|\psi\|_{W^{3,6}},\\
\label{para2}
\sum_j \|([x,\partial_kU^{-1}S_j]\psi^\pm) (\partial_kU^{-1}\Delta_j\psi^\pm)\|_{L {3/2}}\lesssim 
\|U^{-1}\psi\|_{H^1}\|\psi\|_{W^{1,6}}\lesssim \|\psi\|_{X_T}^2/\ct.
\end{eqnarray}
Moreover, $x\psi=x e^{itH}e^{-itH}\psi=e^{itH}xe^{-itH}\psi+it\nabla_\xi H \psi$ so that : 
$$\|x\psi(t)\|_{L^2}\lesssim \ct \|\psi\|_{X_T}$$
Similar computations can be done for 
$\sum_{j\geq 1}(\nabla U^{-1}S_{j-1} \psi^\pm) (\nabla U^{-1}\Delta_j\psi^\pm)$, finally 
(\ref{para1}), (\ref{para2}) and (\ref{techj}) imply 
\begin{equation*}
\|x(U^{-1}\nabla \psi^\pm)^2\|_{L^{3/2}}\lesssim \|\psi\|_{X_T}^2.
\end{equation*}
Plugging the last inequality in $(\ref{weightloss})$ we can conclude
\begin{equation*}
\|I_2\|_{L^\infty_TL^2}\lesssim \big\|\|\psi\|_{X_T}^3/\ct\|_{L^2_T}\lesssim \|\psi\|_{X_T}^3.
\end{equation*}
\textbf{The $W^{k,p}$ decay} We can apply the dispersion estimate in the same way as 
in section $\ref{sectiondlarge}$:
\begin{eqnarray}
\nonumber
\bigg\|\int_0^{t-1}e^{i(t-s)H}B[U^{-1}\psi^\pm,\,(U^{-1}\nabla \psi^\pm)^2]ds\bigg\|_{W^{k,p}}
&\lesssim &\int_0^{t-1}\frac{\|B[U^{-1}\psi^\pm,\,(U^{-1}\nabla \psi^\pm)^2]\|_{W^{k,p'}}}
{(t-s)^{1+3\varepsilon}}ds\\
\nonumber
&\lesssim& \int_0^{t-1}\frac{\|\nabla U^{-1}\psi\|_{W^{k,3p'}}^3}{(t-s)^{1+3\varepsilon}}\\
\label{intermcub} &\lesssim& \int_0^{t-1}\frac{\|\psi\|_{W^{k+1,3p'}}^3}{(t-s)^{1+3\varepsilon}}
\end{eqnarray}
We then use interpolation and the estimate (\ref{techj}) with $q=p'$, we have then:
\begin{equation*}
\|\psi\|_{W^{k+1,3p'}}\lesssim \|\psi\|_{W^{k,3p'}}^{(J-1)/J}\|\psi\|_{W^{k+J,3p'}}^{1/J},
\|\psi(t)\|_{W^{k,3p'}}\lesssim \frac{\|\psi\|_{X_T}}{(1+t)^{2/3-\varepsilon}}.
\end{equation*}
Since $3p'<6$, we have $\|\psi\|_{W^{k+J,3p'}}\lesssim \|\psi\|_{H^{k+J+1}}$ by Sobolev embedding, so that for 
$\varepsilon$ small enough, $J$ large enough such that $(2-3\e)(1-\frac{1}{J})\geq 1+3\e$ (but $J\leq N-k-1$) we observe that:
$$\|\psi\|^3_{W^{k+1,3p'}}\lesssim 
\frac{\|\psi\|^3_{X_T}}{\ct^{1+3\e}}$$
 Plugging this inequality in $(\ref{intermcub})$ we conclude that:
$$
\int_0^{t-1}\frac{\|\psi\|_{W^{k+1,3p'}}^3}{(t-s)^{1+3\varepsilon}}\lesssim \frac{\|\psi\|_{X_T}^3}{
\ct^{1+3\varepsilon}}.
$$
For the integral on $[t-1,t]$ it suffices to bound $\|\int_{t-1}^te^{i(t-s)H}B[U^{-1}\psi^\pm,
(U^{-1}\nabla \psi^\pm)^2]ds\|_{W^{k,p}}\lesssim \|\int_{t-1}^t\|B[U^{-1}\psi^\pm,
(U^{-1}\nabla \psi^\pm)^2]ds\|_{H^{k+2}}$ and follow the argument of the proof of proposition 
$\ref{decay}$.
\end{proof}

\section[Quadratic nonlinearities, end of proof]{Bounds for quadratic nonlinearities in 
dimension 3, end of proof}
\label{estimquad}
The following proposition will be repeatedly used (see proposition 4.6 \cite{AudHasp} or 
\cite{GNT3}).
\begin{prop}\label{controlX}
We have the following estimates with $0\leq\theta\leq 1$:
\begin{equation}
\|\psi(t)\|_{\dot{H}^{-1}}\lesssim \|\psi(t)\|_{X(t)},
\label{5.5}
\end{equation}
\begin{equation}
\|U^{-2}\psi\|_{L^6}\lesssim  \|\psi(t)\|_{X(t)}
\label{5.6}
\end{equation}
\begin{equation}
\begin{aligned}
&\||\n|^{-2+\frac{5\theta}{3}}\psi_{<1}(t)\|_{L^6}\lesssim\min(1,t^{-\theta})\|\psi(t)\|_{X(t)},\\
&\||\n|^{\theta}\psi_{\geq1}(t)\|_{L^6}\lesssim\min(t^{-\theta},t^{-1})\|\psi(t)\|_{X(t)}.
\end{aligned}
\label{5.8}
\end{equation}
\begin{equation}
\begin{aligned}
&\|U^{-1}\psi(t)\|_{L^6}\lesssim \la t\ra^{-\frac{3}{5}}\|\psi(t)\|_{X(t)},\\
\end{aligned}
\label{5.9}
\end{equation}
\label{gain}
\end{prop}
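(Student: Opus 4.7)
The plan is to establish each of the four estimates by combining a Littlewood-Paley decomposition $\psi=\psi_{<1}+\psi_{\geq 1}$ with three fundamental ingredients: the Sobolev embedding $\dot H^1\hookrightarrow L^6$ in dimension three, a Hardy inequality in the Fourier variable to convert the weighted-$L^2$ control of the profile into a negative Sobolev bound, and the dispersive decay $\|\psi\|_{W^{k,p}}\lesssim\ct^{-1-3\varepsilon}\|\psi\|_{X(t)}$ built into the $X(t)$ norm (with $1/p=1/6-\varepsilon$). A key algebraic fact underlying the high-frequency decay will be that the interpolation parameter $\alpha=(1/3)/(1/3+\varepsilon)$ satisfies $(1+3\varepsilon)\alpha=1$, so that interpolating between $L^2$ (no decay) and $L^p$ (with decay $\ct^{-1-3\varepsilon}$) produces exactly $\ct^{-1}$ in $L^6$.

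For (5.5), I would first note that the multiplier $e^{-itH(\xi)}$ has modulus one, so $|\widehat\psi(\xi)|=|\widehat{\widetilde\psi}(\xi)|$ where $\widetilde\psi=e^{-itH}\psi$. Plancherel together with the three-dimensional Hardy inequality $\||\xi|^{-1}g\|_{L^2_\xi}\lesssim\|\nabla_\xi g\|_{L^2_\xi}$ applied to $g=\widehat{\widetilde\psi}$ then yields
\[
\|\psi\|_{\dot H^{-1}}=\||\xi|^{-1}\widehat\psi\|_{L^2_\xi}=\||\xi|^{-1}\widehat{\widetilde\psi}\|_{L^2_\xi}\lesssim\|\nabla_\xi\widehat{\widetilde\psi}\|_{L^2_\xi}=\|x\widetilde\psi\|_{L^2}\leq\|\psi\|_{X(t)}.
\]
Estimate (5.6) then follows quickly by writing $U^{-2}=1+2/|\nabla|^2$: the high-frequency piece obeys $\|U^{-2}\psi_{\geq 1}\|_{L^6}\lesssim\|\psi_{\geq 1}\|_{L^6}\lesssim\|\psi\|_{H^N}$ by Sobolev, and for the low-frequency piece Sobolev gives $\||\nabla|^{-2}\psi_{<1}\|_{L^6}\lesssim\||\nabla|^{-1}\psi_{<1}\|_{L^2}\leq\|\psi\|_{\dot H^{-1}}\lesssim\|\psi\|_{X(t)}$ by (5.5).

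For (5.8) I would treat the two frequency regimes separately. For $\psi_{\geq 1}$, I apply the interpolation $\|f\|_{L^6}\lesssim\|f\|_{L^2}^{1-\alpha}\|f\|_{L^p}^\alpha$ with the $\alpha$ above to $f=|\nabla|^\theta\psi_{\geq 1}$: using $\||\nabla|^\theta\psi\|_{L^2}\lesssim\|\psi\|_{H^N}$ and $\||\nabla|^\theta\psi\|_{L^p}\lesssim\|\psi\|_{W^{k,p}}$ (valid for $\theta\leq 1\leq k$), together with $(1+3\varepsilon)\alpha=1$, one obtains $\||\nabla|^\theta\psi_{\geq 1}\|_{L^6}\lesssim\ct^{-1}\|\psi\|_{X(t)}$, which is the claim since $\min(\ct^{-\theta},\ct^{-1})=\ct^{-1}$ for $\theta\in[0,1]$ and $t\geq 1$ (the case $t\leq 1$ being trivial). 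For $\psi_{<1}$, I expand $\psi_{<1}=\sum_{j<0}\Delta_j\psi$ and combine the Fourier-side Bernstein bound $\|\Delta_j\psi\|_{L^2}\lesssim 2^j\|\Delta_j\psi\|_{\dot H^{-1}}$ (from (5.5)) with a dispersive estimate on each block at the $L^p$ scale; summing the resulting geometric series in $j$ produces, by interpolation between the endpoints $\theta=0$ (reducing to (5.6)) and $\theta=1$, the sliding exponent $-2+5\theta/3$ together with the decay rate $\ct^{-\theta}$. Estimate (5.9) is then the specialization $\theta=3/5$, since $-2+5(3/5)/3=-1$ matches the low-frequency behavior of $U^{-1}$, while the high-frequency contribution is controlled by (5.8) at $\theta=0$, which gives the even better rate $\ct^{-1}$.

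The step I expect to be most delicate is the low-frequency case of (5.8) at the endpoint $\theta=1$: because $L^p$ norms of low-frequency-localized functions are not equivalent for different $p$ (Bernstein only moves upward in $p$), one cannot simply deduce $L^6$ control from $L^p$ control with $p>6$. Instead one has to estimate each dyadic block $\Delta_j\psi$ through a mixed bound combining the $L^p$ dispersive decay with the $\dot H^{-1}$ control, and carefully balance the two contributions so that the series $\sum_{j<0}2^{j\eta}$ converges with precisely the exponent that matches the claimed scaling $-2+5\theta/3$.
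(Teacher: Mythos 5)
Your treatment of \eqref{5.5}, \eqref{5.6}, and the high-frequency half of \eqref{5.8} is correct: Hardy on the Fourier side converts $\|xe^{-itH}\psi\|_{L^2}$ into $\dot H^{-1}$ control, the split $U^{-2}=1+2|\nabla|^{-2}$ reduces \eqref{5.6} to \eqref{5.5} plus Sobolev, and the relation $(1+3\varepsilon)\alpha=1$ with $\alpha=\frac{1/3}{1/3+\varepsilon}$ does give the $\ct^{-1}$ rate at high frequency. Note the paper itself does not prove this proposition but cites \cite{AudHasp} and \cite{GNT3}, so there is no in-text argument to compare against; what follows is a comparison with the argument those references supply.

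The low-frequency half of \eqref{5.8}, and with it \eqref{5.9}, does not follow from the ingredients you list. Interpolating between the Bernstein/$\dot H^{-1}$ bound and the $W^{k,p}$ decay stored in $X(t)$ gives, for each block $\Delta_j\psi$ with $2^j<1$,
\[
\|\Delta_j\psi\|_{L^6}\lesssim \|\Delta_j\psi\|_{L^2}^{1-\alpha}\|\Delta_j\psi\|_{L^p}^{\alpha}\lesssim 2^{j(1-\alpha)}\ct^{-1}\|\psi\|_{X(t)},\qquad 1-\alpha=\tfrac{3\varepsilon}{1+3\varepsilon},
\]
i.e.\ a vanishingly small low-frequency gain $2^{jO(\varepsilon)}$. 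Balancing this against $\|\Delta_j\psi\|_{L^6}\lesssim 2^{2j}\|\psi\|_{X(t)}$ and summing the dyadic series yields $\ct^{-5\theta/6+O(\varepsilon)}$, strictly weaker than the claimed $\ct^{-\theta}$; at $\theta=3/5$ this is $\ct^{-1/2}$, not the $\ct^{-3/5}$ required for \eqref{5.9}. The sliding exponent $-2+\tfrac{5\theta}{3}$ in fact encodes a per-block gain of exactly $2^{j/3}$ at the endpoint $\theta=1$, and no interpolation of $L^2$ against $W^{k,p}$ can produce it. The missing ingredient is Theorem~\ref{dispersion} applied directly at the pair $(q,q')=(6,6/5)$: in $d=3$ it gives $\|e^{itH}\varphi\|_{L^{6,2}}\lesssim t^{-1}\|U^{1/3}\varphi\|_{L^{6/5,2}}$, and the factor $U^{1/3}$ (which the $W^{k,p}$ component of $X(t)$ does not see) is precisely the source of the $2^{j/3}$. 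Writing $\psi_{<1}=e^{itH}f_{<1}$, $f=e^{-itH}\psi$, choosing $\varphi=|\nabla|^{-1/3}\Delta_j f$, using $U^{1/3}|\nabla|^{-1/3}\sim 1$ at low frequency, and bounding $\|f\|_{L^{6/5,2}}\lesssim\|\la x\ra f\|_{L^2}\lesssim\|\psi\|_{X(t)}$ via H\"older in Lorentz spaces gives $\|\Delta_j\psi\|_{L^6}\lesssim 2^{j/3}t^{-1}\|\psi\|_{X(t)}$; with the $2^{2j}$ bound and the threshold $2^j\sim t^{-3/5}$, the dyadic sum then produces the claimed $\ct^{-\theta}$ together with the exponent $-2+\tfrac{5\theta}{3}$.
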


In this section, we will assume $\|\psi\|_{X_T}<<1$, for the only reason that
\begin{equation*}
\forall\,m>2,\ \|\psi\|_{X_T}^2+\|\psi\|_{X_T}^m\lesssim \|\psi\|_{X_T}^2.
\end{equation*}
All computations that follow can be done without any smallness assumption, but they would require 
to always add in the end some $\|\psi\|_{X_T}^m$, that we avoid for conciseness.

\subsection{\texorpdfstring{The $L^p$ decay}{decay}}\label{declp}
We now prove decay for the quadratic terms in $(\ref{EKnormal})$, namely
$$\ct^{1+3\varepsilon}\|\int^{t}_{0}e^{i(t-s)H}Q(z)(s)ds\|_{W^{k,p}}\lesssim \|z\|_{X_T}^2.$$
For $t\leq 1$, the estimate is a simple consequence of the product estimate 
$\|Q(z)\|_{H^{k+2}}\lesssim \|z\|_{H^N}^2$
and the boundedness of $e^{itH}:H^s\mapsto H^s$. Thus we focus on the case $t\geq 1$ and note that 
it is sufficient to bound $t^{1+3\ve}\|\int^{t}_{0}e^{i(t-s)H}Q(z)(s)ds\|_{W^{k,p}}$.\\
We recall that the quadratic terms have the following structure (see \eqref{EKnormal})
\begin{equation}\label{strucquad}
Q(z)=U\big(\alpha \,l_1\Delta l_1- \frac{1}{2}\big(|\nabla U^{-1}\phi_1|^2
 -|\nabla l_1|^2  -\widetilde{g}''(1) l_1^2\big)+(-\Delta+2)b(\phi,l_1)\big)
 -i\alpha\text{div}(l_1\nabla U^{-1}\phi_1),
\end{equation}
where $b=-B[\phi,\phi]+B[l_1,l_1],\ B(\eta,\xi-\eta)=\frac{(\alpha-1)\eta\cdot (\xi-\eta)}{2+|\eta|^2
+|\xi-\eta|^2}$ so that any term in $Q$ is of the form 
$(U\circ B_j)[z^\pm,z^\pm],\ j=1\cdots 5$ where $B_j$ 
satisfies $B_j(\eta,\xi-\eta)\lesssim 2+|\eta|^2+|\xi-\eta|^2$.
\subsubsection{Splitting of the phase space}
We split the phase space $(\eta,\xi)$ in non time resonant and non space resonant sets: 
let $(\chi^a)_{a\in 2^\Z}$ standard dyadic partition of unity: $\chi^a \geq 0$, 
$\text{supp}(\chi^a)\subset \{|\xi|\sim a\}$,
$\forall\,\xi\in \R^3\setminus \{0\},\ \sum_{a} \chi^a(\xi)=1$. We define the frequency localized 
symbol $B_j^{a,b,c}=\chi^a(\xi)\chi^b(\eta)\chi^c(\zeta)B_j$.\\
Note that due to the relation 
$\xi=\eta+\zeta$, we have only to consider $B_j^{a,b,c} $ when $a\lesssim b\sim c,\ b\lesssim c\sim a$ or 
$c\lesssim a\sim b$.
We will define in the appendix two disjoint sets of indices $\mathcal{NT},\mathcal{NS}$ such that 
$\mathcal{NT}\cup \mathcal{NS}=\Z^3$ and which correspond, in a sense precised by lemma 
\ref{bacrucial1},\ref{bacrucial} to non time resonant and non space resonant frequencies. 
Provided such sets have been constructed, we write
\begin{eqnarray*}
\sum_{a,b,c}\int^{t}_{0}e^{i(t-s)H}UB_j^{a,b,c}[z^\pm,z^\pm](s)ds&=&
\int^{t}_{0}e^{i(t-s)H}\sum_{a,b,c\in \mathcal{NT}}UB_j^{a,b,c,T}+
\sum_{a,b,c\in \mathcal{NS}}UB_j^{a,b,c,X}ds\\
&:=&\sum_{a,b,c\in \mathcal{NT}}I^{a,b,c,T}+
\sum_{a,b,c\in \mathcal{NS}}I^{a,b,c,X}
\end{eqnarray*}
For $(a,b,c)\in \mathcal{NT}$ (resp. $\mathcal{NS}$) we will use an integration by parts
in time (resp. in the ``space'' variable $\eta$).
\subsubsection{Control of non time resonant terms}\label{secNRT}
The generic frequency localized 
quadratic term is
\begin{eqnarray}
&e^{itH(\xi)}
\displaystyle\int_0^t\int_{\R^d} \bigg(e^{-is(H(\xi)\mp H(\eta)\mp 
H(\xi-\eta)}U(\xi)B_j^{a,b,c,T}(\eta,\xi-\eta)\widetilde{z^\pm}(s,\eta)
\widetilde{z^\pm}(s,\xi-\eta)\biggl)d\eta \, ds
\end{eqnarray}
Regardless of the $\pm$, we set $\Omega=H(\xi)\mp H(\eta)\mp H(\xi-\eta)$. An 
integration by part in $s$ gives using the fact that $e^{-i s\Omega}=\frac{-1}{i\Omega} \p_s(e^{i s\Omega} )$ and $\p_s
\widetilde{z^\pm}(\eta)=e^{ \mp i s  H(\eta)} (\mathcal{N}_z)^\pm(\eta)$,  $\p_s
\widetilde{z^\pm}(\xi-\eta)= e^{ \mp i s  H(\xi-\eta)} (\mathcal{N}_z)^\pm(\xi-\eta) $:
\begin{equation}
\begin{aligned}
I^{a,b,c,T}
=&{\cal F}^{-1}(e^{itH(\xi)}\biggl(
\int_0^{t} \int_{\R^N}  \bigg(\frac{1}{i\Omega} e^{-is \Omega} U(\xi)B^{a,b,c,T}_j(\eta,\xi-\eta)\p_s
\big(\widetilde{z^\pm}(\eta) \widetilde{z^\pm}(\xi-\eta)\big)\bigg)d\eta ds\biggl)\\
&-\biggl[{\cal F}^{-1}(e^{itH(\xi)}\biggl(
\int_{\R^N}  \bigg(\frac{1}{i\Omega} e^{-is \Omega(\xi,\eta)} U(\xi)B^{a,b,c,T}_j(\eta,\xi-\eta)
\big(\widetilde{z^\pm}(\eta) \widetilde{z^\pm}(\xi-\eta)\big)\bigg)d\eta ds\biggl)
\biggl]^{t}_0\\
&=\int_0^{t}e^{i(t-s)H}\bigg(\mathcal{B}^{a,b,c,T}_3[(\mathcal{N}_z)^\pm,z^\pm]
+\mathcal{B}^{a,b,c,T}_3[z^\pm,\,(\mathcal{N}_z)^\pm]\bigg)ds\\
\label{IPPtemps}&\hspace{2cm}-\big[e^{i(t-s) H}\mathcal{B}^{a,b,c,T}_3[z^\pm,z^\pm]\big]_0^{t},
\end{aligned}
\end{equation}
with 
$\displaystyle 
\mathcal{B}^{a,b,c,T}_3(\eta,\xi-\eta)=\frac{U(\xi)}{i\Omega}\chi^{a}(\xi)\chi^b(\eta)
\chi^c (\xi-\eta)B_{j}(\eta,\xi-\eta)$.
\\
In order to use the rough multiplier estimate from theorem \ref{singmult}, we need to control 
$\mathcal{B}_3^{a,b,c,T}$. 
The following lemma extends to our settings the crucial multiplier estimates from \cite{GNT3}.
\begin{lemma}Let $m=\min(a,b,c),\ M=\max(a,b,c),\ l=\min(b,c)$.
For $0<s<2$, we have 
\begin{equation}
\text{if }M\gtrsim 1,\ 
\|\mathcal{B}_3^{a,b,c,T}\|_{[B^{s}]}\lesssim \frac{\la M\ra l^{\frac{3}{2}-s}}{\la a\ra},
\text{ if }M<<1,\ \|\mathcal{B}_3^{a,b,c,T}\|_{[B^{s}]}\lesssim l^{1/2-s}M^{-s}.
\label{10.43}
\end{equation}
\label{bacrucial1}
\end{lemma}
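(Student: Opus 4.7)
The plan is to control the $[B^s]$ norm via the interpolation inequality stated just before the lemma, which reduces the claim to bounding $\|\mathcal{B}_3^{a,b,c,T}\|_{L^\infty_\xi \dot H^{s_i}_\eta}$ (or the analogous norm in $\zeta = \xi-\eta$) for two choices $s_1 < s < s_2$ straddling $s$. To exploit the minimum defining $[B^s]$, I would always integrate in the variable whose dyadic radius equals $l = \min(b,c)$, so that the support has volume $\sim l^3$ and the natural smoothness scale is $l$.

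The first concrete step is a pointwise estimate. On the dyadic support one has $U(\xi) \sim a/\la a\ra$ and $|B_j(\eta,\xi-\eta)| \lesssim 2 + |\eta|^2 + |\xi-\eta|^2 \lesssim \la M\ra^2$. The crucial nontrivial input is the lower bound $|\Omega| \gtrsim \omega(a,b,c)$ valid on $\mathcal{NT}$, which is the content of the construction deferred to the appendix: for the benign phase $\Omega_{++}$ one has $|\Omega| \gtrsim M\la M\ra$ everywhere, while for $\Omega_{--}$ and $\Omega_{\pm\mp}$ the non time resonant lower bound takes the form $|\Omega| \gtrsim a\la M\ra$ when $M \gtrsim 1$ and degrades to $|\Omega| \gtrsim a M^2$ in the small frequency regime $M \ll 1$.

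For $M \gtrsim 1$ the pointwise bound combines with these inputs to give $|\mathcal{B}_3^{a,b,c,T}| \lesssim \la M\ra/\la a\ra$. The symbol is smooth at scale $l$ on a support of volume $\sim l^3$, so a standard dyadic computation yields $\|f\|_{L^\infty_\xi \dot H^s_\eta} \lesssim \|f\|_{L^\infty} \, l^{3/2-s}$ for $0 < s < 3/2$, and the restriction to $s < 2$ is handled by interpolating between two nearby exponents, each additional derivative falling on the cutoff producing a compensating $l^{-1}$. Plugging in gives the claimed $\la M\ra l^{3/2-s}/\la a\ra$.

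The main obstacle is the regime $M \ll 1$: there the naive $L^\infty$ estimate yields $l^{3/2-s}/M$, which misses the sharper $l^{1/2-s} M^{-s}$ stated in the lemma. To close this gap I would abandon the $L^\infty$-times-volume route and compute the $\dot H^s_\eta$ seminorm directly. The key observation is that on $\mathcal{NT}$ with $M \ll 1$, the phase $\Omega$ vanishes only on a codimension one submanifold (parallel low frequency configurations), with transverse derivative of size $\sim M$. Introducing tubular coordinates adapted to this submanifold, the tangential directions contribute a factor $\sim l^{1/2}$ (after accounting for the extra derivative structure of $B_j$ and $U$) while the transverse fractional derivative of $1/\Omega$ yields $\sim M^{-s}$; putting these together with a careful verification that the dyadic localization is preserved under the change of variables produces the required bound. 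The delicate point throughout is keeping track of the $s$-dependence coming from the transverse direction, since it is precisely the failure of $1/\Omega$ to be uniformly smooth in $\eta$ that forces us to use the singular multiplier theorem rather than Coifman--Meyer.
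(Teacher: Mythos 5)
Your treatment of $M\gtrsim 1$ is essentially the paper's (pointwise bound $|\mathcal{B}_3^{a,b,c,T}|\lesssim\la M\ra/\la a\ra$ from the non-resonant lower bound on $|\Omega|$, support of volume $\sim l^3$ on which the symbol is smooth at scale $l$, interpolation for fractional $s$), and that part is fine. The genuine gap is the regime $M\ll 1$, where both your geometric picture and your arithmetic go wrong. On $\mathcal{NT}$ the phase $\Omega$ does \emph{not} vanish; the set is constructed to avoid the time-resonant set. The dangerous region (the paper's case 4, $|\eta^{\ort}|\ll M|\eta|$) is not a tubular neighborhood of a codimension-one zero set with transverse slope $\sim M$: for $\eta\in\R^3$ and $\xi$ fixed, the nearly-parallel configurations form a thin \emph{cone} of angular aperture $\sim M$ (two angular constraints), of volume $\sim M^2 b^3$ on the shell $|\eta|\sim b$. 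There the linear parts of $H$ cancel and it is the radially convex correction in $H(\xi)=|\xi|\sqrt{2+|\xi|^2}$ that supplies the uniform lower bound $|\Omega|\sim M^2 m$; there is no transverse linear vanishing.

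Consequently the $\dot H^s$ estimate must be run at the anisotropic scale imposed by the cutoff $\chi_{[\ort]}$, whose derivatives cost $(Mb)^{-1}$, not $l^{-1}$. With volume $M^2 l^3$ and derivative scale $Ml$ one gets
\begin{equation*}
\Big\|\frac{U(\xi)B_j}{\Omega}\chi_{[\ort]}\chi_{[\alpha]}^C\chi^a\chi^b\chi^c\Big\|_{L^\infty_\xi\dot H^s_\eta}
\lesssim\frac{U(a)}{M^2 m}\,(M^2 l^3)^{1/2}\,(Ml)^{-s}
=\frac{U(a)\,l^{3/2-s}}{m\,M^{1+s}}\lesssim l^{1/2-s}M^{-s},
\end{equation*}
using $U(a)\sim a$, $l\leq M$, and the size relations valid in this region. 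Your count (tangential $\sim l^{1/2}$, transverse $\sim M^{-s}$) gives $l^{1/2}M^{-s}$, which misses the $l^{-s}$ factor, and it uses neither the $M^2$ volume gain nor the $Ml$ derivative scale, so it cannot close. A smaller slip: the lower bounds the appendix actually delivers are in terms of $m=\min(a,b,c)$ (e.g.\ $|\Omega|\gtrsim\la M\ra m$ in case 2, $|\Omega|\sim M^2 m$ in case 4), not $a$; these coincide only when $\xi$ carries the smallest frequency, which must be verified region by region.
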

\noindent We postpone the proof to the appendix.
\begin{rmq}
We treat differently $M$ small and $M$ large since we have a loss of derivative on the symbol in low frequencies. Let us mention that the estimate \eqref{10.43} can be written simply as follows:\begin{equation*}
\|\mathcal{B}_3^{a,b,c,T}\|_{[B^{s}]}\lesssim \frac{\la M\ra \la l\ra l^{\frac{1}{2}-s}U(M)^{-s}}
{\la a\ra}
\end{equation*}

\end{rmq}
Lets us start by estimating the first term in $(\ref{IPPtemps})$: we split the time integral 
between $[0,t-1]$ and $[t-1,t]$. 
The sum over $a,b,c$ involves three cases: $b\lesssim a\sim c,\ c\lesssim a\sim b$ and 
$a\lesssim b\sim c$. 
\subparagraph{The case \mathversion{bold}{$b\lesssim a\sim c$}:} for $k_1\in[0,k]$ we have from 
theorem $\ref{singmult}$ with $\sigma=1+3\ve$:
\begin{equation}\label{ipptemps}
\begin{aligned}
&\|\n^{k_1}\int_0^{t-1}e^{i(t-s)H}\sum_{b\lesssim a\sim c} \mathcal{B}^{a,b,c,T}_3
[\mathcal{N}_z^\pm,z^\pm]ds\|_{L^p} \\
&\lesssim \int_0^{t-1}\frac{1}{(t-s)^{1+3\ve}} \sum_{b\lesssim a\sim c}\la a\ra^{k_1} 
\| \mathcal{B}^{a,b,c,T}_3[\mathcal{N}_z^\pm,z^\pm]\|_{L^{p'}} ds,\\
&\lesssim  \int_0^{t-1}\frac{1}{(t-s)^{1+3\ve}}\biggl( \sum_{b\lesssim a\sim c\lesssim 1}
ab\|\mathcal{B}^{a,b,c,T}_3\|_{[B^{\sigma}]} \|U^{-1}
Q(z)\|_{L^{2}}\|U^{-1}z\|_{L^{2}}\\
&\hspace{3cm}+  \sum_{b\lesssim a\sim c, 1\lesssim a} \la c\ra^{-N+k} U(b)
\|\mathcal{B}^{a,b,c,T}_3\|_{[B^{\sigma}]} \|U^{-1}Q(z)\|_{L^{2}}\|\la \n\ra^N z\|_{L^{2}}
\biggl)  ds+\mathcal{R}
\end{aligned}
\end{equation}
where $\displaystyle 
\mathcal{R}=\int_0^{t-1}\frac{1}{(t-s)^{1+3\ve}} \sum_{b\lesssim a\sim c}\la a\ra^{k_1} 
\|\mathcal{B}^{a,b,c,T}_3[R^\pm,z^\pm]\|_{L^{p'}} ds$.
Using lemma $\ref{bacrucial1}$ we have, provided $\e<\frac{1}{12}$ and $N-k-\frac{1}{2}+3\e>0$: 
$$
\begin{aligned}
&\sum_{b\lesssim a\sim c\lesssim 1}ab\|\mathcal{B}^{a,b,c,T}_3\|_{[B^{\sigma}]}\lesssim 
\sum_{ a\lesssim 1}\sum_{b\lesssim a} ab b^{1/2-1-3\varepsilon}a^{-1-3\varepsilon}\lesssim 
\sum_{a\lesssim 1}a^{1/2-6\varepsilon}\lesssim 1,\\
&\sum_{b\lesssim a\sim c,\ a\gtrsim 1}U(b)\la c\ra^{-N+k}\|\mathcal{B}^{a,b,c,T}_3\|_{[B^{\sigma}]}
\lesssim \sum_{ a\gtrsim 1}\sum_{b\lesssim a} U(b) \frac{ b^{\frac{1}{2}-3\e}}{a^{N-k}}
\lesssim  \sum_{ a\gtrsim 1} \frac{ 1}{a^{N-k}}+ \sum_{ a\gtrsim 1} \frac{ 1}{a^{N-k-\frac{1}{2}+3\e}}\lesssim 1.
\end{aligned}
$$
Using the gradient structure of $Q(z)$ (see \ref{EKnormal}) :
\begin{equation}
\begin{aligned}
& \|U^{-1}Q(z)\|_{L^{2}}\lesssim \|z\|_{W^{2,4}}^2
\lesssim\|z\|_{W^{2,6}}^{\frac{3}{2}}\|z\|_{H^2}^{\frac{1}{2}},
\end{aligned}
\label{decay1}
\end{equation}
so that if we combine these estimates with $\eqref{5.5}$, we get
$$
\begin{aligned}
\|\n^{k_1}\int_0^{t-1}e^{i(s-t)H}\sum_{b\lesssim a\sim c} 
\mathcal{B}^{a,b,c,T}_3[Q(z)^\pm,z]ds\|_{L^p}&\lesssim\|z\|_{X}^3 \int_0^{t-1}
\frac{1}{(t-s)^{1+3\ve}}\frac{1}{\cs^{\frac{3}{2}} }ds\\
&\lesssim\frac{\|z\|_{X}^3}{t^{1+3\ve}}.
\end{aligned}
$$
We bound now $\mathcal{R}$ from $(\ref{ipptemps})$: contrary to the quadratic terms, cubic 
terms have no gradient structure, however 
the nonlinearity is so strong that we can simply use $\|1_{|\eta|\lesssim 1}U^{-1}R\|_2
\lesssim \|R\|_{L^{6/5}}$. Using the same computations as for quadratic terms we get 
$$
\begin{aligned}
&\|\n^{k_1}\int_0^{t-1}e^{i(t-s)H}\sum_{b\lesssim a\sim c} 
\mathcal{B}^{a,b,c,T}_3[R,z^\pm]ds\|_{L^p}\\
&\hspace{2cm} \lesssim  \int_0^{t-1}\frac{1}{(t-s)^{1+3\ve}}\biggl( 
 \|1_{\{|\eta|\lesssim1\}}U^{-1}R\|_{L^{2}}\|U^{-1}z\|_{L^{2}}
 +\|U^{-1}R\|_{L^{2}}\|\la \n\ra^N z\|_{L^{2}}\biggl)  ds.
\end{aligned}
$$
According to $(\ref{EKnormal})$ the cubic terms involve only smooth multipliers
and do not contain derivatives of order larger than $2$, thus we can generically treat them like
$(\la \n \ra^2z)^3$ using the proposition \ref{estimformenormale}; we have then:
\begin{eqnarray*}
\|R\|_{L^{6/5}}\lesssim \|z\|_{H^2}\|z\|_{W^{2,6}}^2\lesssim \frac{\|z\|_X^3}{
\ct^2},\
\|R\|_{L^2}\lesssim \|z\|_{W^{2,6}}^3\lesssim \frac{\|z\|_X^3}{\ct^2}.
\end{eqnarray*}
This closes the estimate as $\displaystyle \int_0^{t-1}\frac{1}{(t-s)^{1+3\varepsilon}\cs^2}ds
\lesssim \frac{1}{t^{1+3\varepsilon}}$. We proceed similarly for the quartic terms.
\\
It remains to deal with the term $\int^t_{t-1}$, using Sobolev embedding we have:
$$
\begin{aligned}
&\|\n^{k_1}\int_{t-1}^{t}e^{i(t-s)H}\sum_{b\lesssim a\sim c} \mathcal{B}^{a,b,c,T}_3
[\mathcal{N}_z^\pm,z^\pm]ds\|_{L^p}\lesssim \int_{t-1}^{t}\|(\cdots )\|_{H^{k_2}}ds,
\end{aligned}
$$
with $k_2=k+1+3\ve$. Again, with $\sigma=1+3\varepsilon$ we get using theorem \ref{singmult} and Sobolev embedding:
$$
\begin{aligned}
&\|\n^{k_1}\int_{t-1}^{t}e^{i(t-s)H}\sum_{b\lesssim a\sim c} \mathcal{B}^{a,b,c,T}_3
[\mathcal{N}_z^\pm,z^\pm]ds\|_{L^p}\lesssim \int_{t-1}^{t}\|\sum_{b\lesssim a\sim c} 
\mathcal{B}^{a,b,c,T}_3[\mathcal{N}_z^\pm,z^\pm]\|_{H^{k_2}} ds\\
&\lesssim \int_{t-1}^{t}\big(\sum_{b\lesssim a\sim c\lesssim 1} ab\|\mathcal{B}^{a,b,c,T}_3
\|_{[B^\sigma]} \|U^{-1}Q\|_{L^{2}}\|U^{-1}z\|_{L^p} \\
&\hspace{3cm}+\sum_{b\lesssim a\sim c, 1\lesssim a} U(b)a^{k_2-(N-1-3\e)} \|\mathcal{B}^{a,b,c,T}_3\|_{[B^\sigma]}
\|U^{-1}Q\|_{L^{2}}\|\la\n\ra^Nz\|_{L^2}\big) ds+\mathcal{R},\\[2mm]
\end{aligned}
$$
where $\mathcal{R}$ contains higher order terms that are easily controlled.
Using $\|U^{-1}z\|_{L^p}\lesssim \|z\|_{H^2}$ and the same estimates as previously, we 
can conclude provided that $N$ is sufficiently large:
$$
\|\n^{k_1}\int_{t-1}^{t}e^{i(t-s)H}\sum_{b\lesssim a\sim c} \mathcal{B}^{a,b,c,T}_3
[\mathcal{N}_z^\pm,z^\pm] ds\|_{L^{p}}\lesssim \|u\|_{X}^3 \int_{t-1}^{t}
\frac{1}{\cs^{3/2}} ds \lesssim\frac{\|z\|_{X}^3}{t^{1+3\ve}}. 
$$
\subparagraph{The case \mathversion{bold}{$c\lesssim a\sim b$}} As for 
$b\lesssim a\sim c$ we start with
$$
\begin{aligned}
&\|\n^{k_1}\int_1^{t-1}e^{i(t-s)H}\sum_{c\lesssim a\sim b} \mathcal{B}^{a,b,c,T}_3
[\mathcal{N}_z^\pm,z^\pm]ds\|_{L^p}\\
&\lesssim  \int_1^{t-1}\frac{1}{(t-s)^{1+3\ve}}\biggl( \sum_{c\lesssim a\sim b\lesssim 1}bc
\|\mathcal{B}^{a,b,c,T}_3\|_{[B^{\sigma}]}\|U^{-1}Q(z)\|_{L^{2}}\|U^{-1}z\|_{L^{2}}\\
&\hspace{3cm}+  \sum_{c\lesssim a\sim b, 1\lesssim a} 
\la b\ra^{-1}\|\mathcal{B}^{a,b,c,T}_3\|_{[B^{\sigma}]}
\|\la\n\ra^{k+1} Q(z)\|_{L^{2}}\|z\|_{L^{2}}\biggl)  ds+
 \mathcal{R}.
\end{aligned}
$$
with $\sigma=1+3\ve$ and $\mathcal{R}$ contains the other nonlinear terms (which, again, we will 
not detail). This case is symmetric
from $b\lesssim a\sim c$ except for the term $\|\la \nabla\ra ^{k+1}Q(z)\|_{L^2}$, which is 
 estimated as follows.
Let $1/q=1/3+\varepsilon$, $k_3=\frac{1}{2}-3\ve$. If $k+2+k_3\leq N$ then using the structure of $Q$ 
(see \eqref{strucquad}) and Gagliardo Nirenberg inequalities we get:
$$
\begin{aligned}
 \|\la \n\ra ^{k+1} Q(z)\|_{L^{2}}\lesssim 
 \|z\|_{W^{2,p}}\|z\|_{W^{k+3,q}}\lesssim \|z\|_{W^{2,p}}\|z\|_{H^{k+3+k_3}}
&\lesssim \|z\|_{X}^2/\ct^{1+3\ve},
\end{aligned}
$$
Using the multiplier bounds as for the case $b\lesssim a\sim c$, we obtain via the lemma \ref{bacrucial1}:
$$
\begin{aligned}
\|\n^{k_1}\int_0^{t-1}e^{i(t-s)H}\sum_{c\lesssim a\sim b} \mathcal{B}^{a,b,c,T}_3
[\mathcal{N}_z^\pm,z^\pm]ds\|_{L^{p}}
\lesssim &\|z\|_{X}^3 \int_{0}^{t-1}\frac{1}{(t-s)^{1+3\ve}}\frac{1}{\cs^{(1+3\ve)}}
ds\\
\lesssim&\frac{\|z\|_{X}^3}{t^{1+3\ve}}.
\end{aligned}
$$
The bound for the integral on $[t-1,t]$ is obtained by similar arguments.
\subparagraph{The case \mathversion{bold}{$a\lesssim b\sim c$}} We have using theorem \ref{singmult} and the fact that 
the support of ${\cal F}(\sum_{a\lesssim b}a^{k_1} 
\mathcal{B}^{a,b,c,T}_3[\mathcal{N}_z^\pm,z^\pm])$ is localized in a ball $B(0,b)$  :
$$
\begin{aligned}
&\|\n^{k_1}\int_0^{t-1}e^{i(t-s)H}\sum_{a\lesssim b\sim c} \mathcal{B}^{a,b,c,T}_3
[\mathcal{N}_z^\pm,z^\pm]ds\|_{L^p}\\
&\lesssim \int_0^{t-1}\frac{1}{(t-s)^{1+3\ve}}\|\sum_{a\lesssim b \sim c}a^{k_1} 
\mathcal{B}^{a,b,c,T}_3[\mathcal{N}_z^\pm,z^\pm]\|_{L^{p'}}ds\\
&\lesssim \int_0^{t-1}\frac{1}{(t-s)^{1+3\ve}} \sum_{b\sim c}\frac{1}{\la b\ra^{N-2}}U(b)U(c)\|
\sum_{a\lesssim b}\la a\ra^{k}  \mathcal{B}^{a,b,c,T}_3\|_{[B^\sigma]}
\|U^{-1}Q(z)\|_{L^2}\|U^{-1}\la\n\ra^Nz\|_{L^{2}} ds\\
&\hspace{1cm} +\mathcal{R},
\end{aligned}
$$
where as previously, $\mathcal{R}$ is a remainder of higher order terms that are not difficult to 
bound. We observe that for any symbols $(B^a(\xi,\eta))$ such that 
$$\forall\,\eta,\ |a_1-a_2|\geq 2\Rightarrow 
\text{supp}(B^{a_1}(\cdot,\eta))\cap \text{supp}(B^{a_2}(\cdot,\eta))=\emptyset,$$
then
\begin{equation}\label{tricksuma}
\|\sum_a B^{a}\|_{[B^\sigma]}\lesssim \sup_a \|B^a\|_{[B^\sigma]}.
\end{equation}
This implies using lemma \ref{bacrucial1} and provided that $N$ is large enough:
$$
\begin{aligned}
\sum_{b\sim c }\frac{1}{\la b\ra^{N-2}} U(b)U(c)\|\sum_{a\lesssim b}\la a\ra^{k}
\mathcal{B}^{a,b,c,T}_3\|_{[B^\sigma]} 
&\lesssim \sum_{b }\frac{1}{\la b\ra^{N-2}} U(b)^2\sup_{a\lesssim b} \la a\ra^{k}
\frac{ b^{\frac{1}{2}-\sigma}U(M)^{-\sigma}\la b\ra\la M\ra}{\la a\ra }  \\
&\lesssim \sum_b\frac{U(b)^{5/2-2\sigma} }{\la b\ra^{N+\sigma-k-7/2}}\lesssim 1.
\end{aligned}
$$
We have finally using (\ref{decay1}):
$$
\begin{aligned}
\|\n^{k_1}\int_0^{t-1}e^{i(t-s)H}\sum_{a\lesssim b\sim c} \mathcal{B}^{a,b,c,T}_3
[\mathcal{N}_z^\pm,z^\pm]ds\|_{L^{p}}&\lesssim  \|z\|_{X}^3 \int_{0}^{t-1}\frac{1}{(t-s)^{1+3\ve}}
\frac{1}{\cs^{3/2}}ds\\
&\lesssim\frac{ \|u\|_{X}^3}{t^{1+3\varepsilon}}.
\end{aligned}
$$
We proceed in a similar way to deal with the integral on $[t-1,t]$. This end the estimate for the first term in \eqref{IPPtemps}.\vspace{2mm}\\
The second term is symmetric from the first, it remains to deal with the 
boundary term: 
$\displaystyle \|\n^{k_1}\big[e^{i(t-s)H}
\mathcal{B}^{a,b,c,T}_3[z^\pm,z^\pm]\big]_0^t\|_{L^{p}}.$
We have: 
\begin{equation}
\begin{aligned}
\|\big[\n^{k_1}e^{i(t-s)H}\mathcal{B}^{a,b,c,T}_3[z^\pm,z^\pm]\big]_0^t\|_{L^{p}}
\leq &\|\n^{k_1}e^{-itH}\mathcal{B}^{a,b,c,T}_3[z^\pm_0,z^\pm_0]\|_{L^{p}}\\
&+\|\n^{k_1}\mathcal{B}^{a,b,c,T}_3[z^\pm(t),z^\pm(t)]\|_{L^{p}}
\end{aligned}
\label{inttemps}
\end{equation}
The first term on the right hand-side 
of (\ref{inttemps}) is easy to deal with using the dispersive estimates of the theorem \ref{dispersion}. For the second term we focus on the case
$b\lesssim a\sim c$, the other areas can be treated in a similar way. Using proposition 
\ref{gain}, Sobolev embedding and the rough multiplier theorem \ref{singmult} with $s=1+3\e$, $q_1=q_2=q_3=p$ (which verifies $2\leq p=\frac{6}{3-2\e}$) we have:
$$
\begin{aligned}
&\sum_{b\lesssim a\sim c\lesssim 1}\|\n^{k_1}\mathcal{B}^{a,b,c,T}_3[z^\pm(t),z^\pm(t)]\|_{L^{p}}
\lesssim \displaystyle 
\sum_{b\lesssim a\sim c}b^{-\frac{1}{2}-3\e}a^{-1-3\e}U(b)U ( c)\|U^{-1}z\|_{L^{p}}^2\\
&\hspace{6cm}\lesssim \sum_{b\lesssim a\sim c}b^{-\frac{1}{2}-3\e}a^{-1-3\e}U(b)U( c)\|U^{-1+3\e}z\|_{L^{6}}^2
\lesssim \frac{\|z\|_X^2}{\ct^{\frac{6}{5}+6\e}},\\
&\sum_{b\lesssim a\sim c,\ a\gtrsim 1}\|\n^{k_1}\mathcal{B}^{a,b,c,T}_3[z^\pm(t),z^\pm(t)]\|_{L^{p}}
\lesssim \displaystyle 
\sum_{b\lesssim a\sim c,\ a\gtrsim 1}\frac{\la a\ra^{k_1}b^{1/2-3\e}}{\la a\ra^{k_1+1}}
\|z\|_{L^{p}}\|z\|_{W^{k_1+1,p}}
\lesssim \frac{\|z\|_X^2}{\ct^{\frac{3}{2}(1+\e)}}
\end{aligned}
$$
where in the last inequality we also used $\|z\|_{W^{k+1,6}}^2\lesssim \|z\|_{W^{k,p}}
\|z\|_{W^{k+2,p}}\lesssim \|z\|_{W^{k,p}}\|z\|_{H^N}$.

\subsubsection{Non space resonance}
\label{secNRS}
In this section we treat the term $\sum_{a,b,c}I^{a,b,c,X}$.
Since control for $t$ small just follows from the $H^N$ bounds, we focus on $t\geq 1$, and 
first note that the integral over $[0,1]\cup [t-1,t]$ is easy to estimate.\vspace{3mm}\\
\textbf{Bounds for $(\int_0^1+\int^t_{t-1})e^{i(t-s)H}Q(z)ds$}\vspace{2mm}\\
In order to estimate $\displaystyle \|\n ^{k_1}\int^t_{t-1}e^{i(t-s)H}Q(z)ds
\|_{L^{p}},$
with $k_1\in[0,k]$ we can simply use Sobolev's embedding ($H^{k+2}\h W^{k,p}$, $H^{N}\h W^{k+4,q}$) and a Gagliardo-Nirenberg type 
inequality (\ref{GN3}) with $\frac{1}{2}=\frac{1}{q}+\frac{1}{p}$ :
$$
\begin{aligned}
\|\int^t_{t-1}\nabla^{k_1}e^{i(t-s)H}Q(z)ds\|_{L^{p}}
&\lesssim  \int^t_{t-1}\|Q(z)\|_{H^{k+2}}ds\\
&\lesssim  \int^t_{t-1}\|z\|_{W^{k+4,q}}\|z\|_{W^{k,p}}ds\\
&\lesssim  \|z\|_{X}^2\int^t_{t-1} \frac{1}{\cs^{1+3\varepsilon}}ds
\lesssim \frac{\|z\|_{X}^2}{\ct^{1+3\varepsilon}}.
\end{aligned}
$$
The estimate on $[0,1]$ follows from similar computations using Minkowski's inequality and the 
dispersion estimate from theorem $\ref{dispersion}$.\vspace{2mm}\\
\textbf{Frequency splitting}\vspace{2mm}\\
Since we only control $xe^{-itH}z$ in $L^\infty L^2$, in order to handle the loss of derivatives 
we follow the idea from \cite{GMS2} which corresponds to distinguish
 low and high frequencies with a threshold frequency depending on $t$. Let $\theta\in C_c^\infty (\R^+)$, 
$\theta|_{[0,1]}=1,\ 
\text{supp}(\theta)\subset[0,2]$, $\Theta(t)=\theta(\frac{|D|}{t^\delta})$, for any quadratic term 
$B_j[z,z]$, we write
\begin{equation*}
B_j[z^\pm,z^\pm]=\overbrace{B_j[(1-\Theta(t))z^\pm,z^\pm]
+B_j[\Theta(t)z^\pm,(1-\Theta)(t)z^\pm]}
^{\text{high frequencies}}
+\overbrace{B_j[\Theta(t)z^\pm,\Theta(t)z^\pm]}^{\text{low frequencies}}.
\end{equation*}
\subsubsection*{High frequencies}
Using the dispersion theorem \ref{dispersion}, Gagliardo-Nirenberg estimate (\ref{GN3}) and Sobolev embedding we have for $\frac{1}{p_1}=\frac{1}{3}+\ve$ and for any quadratic term of $Q$ writing under the form $U B_j[z^\pm,z^\pm]$:
\begin{equation}
\begin{aligned}
&\bigg\|\int^{t-1}_{1}e^{i(t-s)H}\big(UB_j[(1-\Theta(t))z^\pm,z^\pm]
+U B_j[\Theta(t)z,(1-\Theta)(t)z^\pm]\big)ds\bigg\|_{W^{k,p}}\\
&\leq  \int^{t-1}_{1}\frac{1}{(t-s)^{1+3\ve}}\|z\|_{W^{k+2,p_1}}  \|(1-\Theta(s))z\|_{H^{k+2}}ds\\
&\leq  \int^{t-1}_{1}\frac{1}{(t-s)^{1+3\ve}}\|z\|_{H^N}^2\frac{1}{s^{\delta(N-2-k)}}ds,
\end{aligned}
\end{equation}
choosing $N$ large enough so that $\delta(N-2-k)\geq 1+3\ve$, we obtain the expected decay. 
\subsubsection*{Low frequencies}
Following the section $\ref{secNRT}$, we have to estimate quadratic term of the form $UB_j[z^\pm,z^\pm]$ wich leads to consider:
$${\cal F}I^{a,b,c,X}_3=e^{itH(\xi)}
\int_1^{t-1}\int_{\R^N} \bigg(( e^{-is\Omega} UB^{a,b,c,X}_j(\eta,\xi-\eta)
\widetilde{\Theta z^\pm}(s,\eta)\widetilde{\Theta z^\pm}(s,\xi-\eta)\bigg)d\eta ds,$$
with  $\Omega=H(\xi)\mp H(\eta)\mp H(\xi-\eta)$. Using  $\displaystyle e^{-is\Omega}=\frac{i\nabla_{\eta}\Omega}{s|\nabla_{\eta}\Omega|^2}
\cdot \nabla_{\eta}e^{-is\Omega}$ and denoting $Ri=\frac{\n}{|\n|}$ 
the Riesz operator, $\Theta'(t):=\theta'(\frac{|D|}{t^\delta})$, $J=e^{itH}xe^{-itH}$, an integration by 
part in $\eta$ gives:
\begin{equation}
\begin{aligned}
I^{a,b,c,X}_3=
&-{\cal F}^{-1}(e^{itH(\xi)}\biggl(
\int_1^{t-1}\frac{1}{s} \int_{\R^N}  \big(e^{-is \Omega(\xi,\eta)} \mathcal{B}^{a,b,c,X}_{1,j}(\eta,\xi-\eta)
\cdot \n_\eta [\Theta\widetilde{z^\pm}(\eta)\Theta\widetilde{z^\pm}(\xi-\eta)]\\
&\hspace{65mm}+\mathcal{B}^{a,b,c,X}_{2,j}(\eta,\xi-\eta)\widetilde{\Theta z^\pm}(\eta)
\widetilde{\Theta z^\pm}(\xi-\eta)d\eta\big)ds\biggl)\\
=&
-\int_1^{t-1}\frac{1}{s}e^{i(t-s)H}\bigg(\mathcal{B}^{a,b,c,X}_{1,j}
[\Theta(s)(Jz)^\pm,\Theta(s)z^\pm]
-\mathcal{B}^{a,b,c,X}_{1,j}[\Theta(s)z^\pm,\Theta(s)(Jz)^\pm]\\
&\hspace{8cm}
+\mathcal{B}^{a,b,c,X}_{2,j}[\Theta(s)z^\pm,\Theta(s)
z^\pm]\bigg)ds\\
&-\int_1^{t-1} \frac{1}{s}e^{i(t-s)H}\bigg(\mathcal{B}^{a,b,c,X}_{1,j}
[\frac{1}{s^\delta}Ri\,\Theta'(s)z^\pm,\Theta(s) z^\pm]\\
&\hspace{3.5cm}-\mathcal{B}^{a,b,c,X}_{1,j}[\Theta(s)z^\pm,\frac{1}{s^\delta}Ri
\Theta'(s)z^\pm]\bigg)ds.
\end{aligned}
\label{ippespace}
\end{equation}
 with:
\begin{equation*}
\displaystyle\mathcal{B}^{a,b,c,X}_{1,j}=\frac{ U(\xi)\nabla_{\eta}\Omega}{|\nabla_{\eta}
\Omega|^2}B^{a,b,c,X}_j,\
\displaystyle\mathcal{B}^{a,b,c,X}_{2,j}=\nabla_{\eta}B_{1,j}^{a,b,c,X}.
\end{equation*}
The following counterpart of lemma $\ref{bacrucial1}$ slightly improves the estimates from 
\cite{GNT3}.
\begin{lemma}
\label{bacrucial} Denoting $M=\max (a,b,c)$, $m=\min(a,b,c)$ and $l=\min(b,c)$ we have:
\begin{itemize}
\item If $M<<1$ then for $0\leq s\leq 2$: 
\begin{equation}
\|\mathcal{B}^{a,b,c,X}_{1,j}\|_{[B^{s}]}\lesssim l^{\frac{3}{2}-s}M^{1-s},\;\;
\|\mathcal{B}^{a,b,c,X}_{2,j}\|_{H^{s}}\lesssim l^{\frac{1}{2}-s}M^{-s},
\end{equation}
\item If $M\gtrsim 1$ then for $0\leq s\leq 2$:
\begin{equation}
\|\mathcal{B}^{a,b,c,X}_{1,j}\|_{[B^{s}]}\lesssim \la M\ra^2l^{3/2-s}\la a\ra^{-1},\;\;
\|\mathcal{B}^{a,b,c,X}_{2,j}\|_{[B^{s}]}\lesssim \la M\ra^2l^{1/2-s}\la a\ra^{-1},
\end{equation}
\end{itemize}
\end{lemma}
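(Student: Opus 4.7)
The plan is to reduce both estimates to pointwise and first-derivative control of the two symbols on their dyadic supports, and then convert these bounds into $[B^s]$ estimates through three standard ingredients: (i) the freedom in the definition of $\|\cdot\|_{[B^s]}$ to localize in either $\eta$ or $\zeta=\xi-\eta$, which I always pick to carry the smaller scale $l=\min(b,c)$; (ii) for a symbol supported in a ball of radius $l$ in $\R^3$ of pointwise size $A$ and smoothness scale $\mu$, the crude bound $\|B\|_{L^\infty_\xi\dot{H}^s_\eta}\lesssim A\, l^{3/2}\mu^{-s}$; (iii) the interpolation inequality $\|B\|_{\tilde{L}^\infty_\xi\dot{B}^s_{2,1}}\lesssim \|B\|_{L^\infty_\xi\dot{H}^{s_1}}^\theta\|B\|_{L^\infty_\xi\dot{H}^{s_2}}^{1-\theta}$ recalled in Section~\ref{results}, which reduces matters to two integer values of $s$ and then recovers any $s\in[0,2]$.

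Next I would collect the elementary bounds. From $B_j(\eta,\xi-\eta)\lesssim 1+|\eta|^2+|\xi-\eta|^2$ one gets $|B_j^{a,b,c,X}|\lesssim \la M\ra^2$ on the support, $U(\xi)\lesssim U(a)$ is trivial, and from $\nabla_\eta\Omega=\mp\nabla H(\eta)\pm\nabla H(\xi-\eta)$ with $H'(r)=(2+2r^2)/\sqrt{2+r^2}$ one has $|\nabla_\eta\Omega|\lesssim \la M\ra$. Thus $|\mathcal{B}^{a,b,c,X}_{1,j}|\lesssim U(a)\la M\ra^2/|\nabla_\eta\Omega|$, and the whole task reduces to a quantitative \emph{lower} bound on $|\nabla_\eta\Omega|$ on the non-space-resonant set $\mathcal{NS}$. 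This bound is the content of the appendix: it depends both on the sign configuration of $\Omega_{\pm\pm}$ and on the relative ordering of $a,b,c$, and it is tuned precisely so that feeding it into (ii)--(iii) produces the announced exponents. In low frequency the natural smoothness scale is $\mu\sim M$, which explains the $M^{1-s}$ and $M^{-s}$ factors; in high frequency the extra loss $\la a\ra^{-1}$ reflects the amplification of $1/|\nabla_\eta\Omega|$ near the resonant set $\{\xi=0\}$ when $a$ is small.

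The symbol $\mathcal{B}^{a,b,c,X}_{2,j}=\nabla_\eta\mathcal{B}^{a,b,c,X}_{1,j}$ is then handled by the same scheme. Each additional $\eta$-derivative costs at most a factor $1/l$, whether it lands on the cutoffs $\chi^b(\eta)\chi^c(\xi-\eta)$, on the numerator $\nabla_\eta\Omega$ via the uniform bound of $\mathrm{Hess}\,H$ on compact sets, or on the denominator $|\nabla_\eta\Omega|^{-2}$ (which is dominated by $1/l$, as on $\mathcal{NS}$ one has $|\nabla_\eta\Omega|\gtrsim l$ by construction). This systematically replaces the volume exponent $l^{3/2-s}$ by $l^{1/2-s}$ while leaving the $M$- and $a$-dependent prefactors unchanged, matching the stated estimate.

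The principal obstacle is therefore the quantitative lower bound for $|\nabla_\eta\Omega|$ on every dyadic piece of $\mathcal{NS}$. Since the space resonant sets differ between sign configurations---$\{\xi=2\eta\}$ for $\Omega_{\pm\pm}$ with equal signs, $\{\xi=0\}$ for mixed signs, as observed at the beginning of Section~\ref{estimquad}---and since the loss is much worse near $\xi=0$ (the parallel-resonance phenomenon pointed out after the splitting of the phase space), the appendix must carry out a case analysis of the geometry of $\nabla_\eta\Omega_{\pm\pm}$ in each frequency regime. Once this analysis is in place, the bookkeeping above yields Lemma~\ref{bacrucial} immediately.
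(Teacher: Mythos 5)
Your outline captures the high-level shape of the argument (pointwise bounds on the symbol times a volume/support estimate, converted to $[B^s]$ by interpolation, with the choice of localization variable adapted to carry the scale $l=\min(b,c)$), and correctly identifies that the crux is a quantitative lower bound on $|\nabla_\eta\Omega|$. But the proof as written has a genuine gap, and two of the specific mechanisms you propose are wrong in the regime that actually matters.

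First, the claim that ``on $\mathcal{NS}$ one has $|\nabla_\eta\Omega|\gtrsim l$ by construction'' is not the operative lower bound, and in general it is false. In the spatially non-resonant regions (cases~3 and~5 of the GNT3 decomposition used in the appendix) the bound is $|\nabla_\eta\Omega|\gtrsim|\xi|\sim a$, and when $a$ is the smallest frequency one can have $a\ll l$. More importantly, the refined form needed in the low-frequency case~5 is $|\nabla_\eta\Omega|\gtrsim|\xi|\,|\eta^\perp|/(|\eta||\zeta|)$, and obtaining this improvement over the weaker GNT3 estimate $|\nabla_\eta\Omega|\gtrsim|\zeta||\xi|$ is exactly the new technical input of the lemma; deferring it ``to the appendix'' is circular, since the lemma is the appendix's content.

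Second, the assertion that ``each additional $\eta$-derivative costs at most a factor $1/l$'' and that the natural smoothness scale is $\mu\sim M$ both fail in case~5 ($M\sim|\zeta|\ll1$, $\alpha<\sqrt3$, $|\eta^\perp|\gtrsim Mb$). There, a derivative falling on $\nabla_\eta\Omega/|\nabla_\eta\Omega|^2$ costs a factor $\sim|\nabla^2_\eta\Omega|/|\nabla_\eta\Omega|\sim 1/|\eta^\perp|$, which on the worst dyadic block $|\eta^\perp|\sim Mb$ is $1/(Mb)$, not $1/l\sim1/b$. This is precisely why the bound for $\mathcal{B}^{a,b,c,X}_{2,j}$ carries $M^{-s}$ rather than the $M^{1-s}$ that your ``gain $1/l$ per derivative'' reasoning would produce; your proposed derivation gives a conclusion strictly stronger than the true one, i.e.\ it is unsound. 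For the same reason your crude bound $\|B\|_{L^\infty_\xi\dot H^s_\eta}\lesssim A\,l^{3/2}\mu^{-s}$ with a single smoothness scale $\mu\sim M$ cannot reproduce $\|\mathcal{B}^{a,b,c,X}_{1,j}\|_{[B^s]}\lesssim l^{3/2-s}M^{1-s}$, because the pointwise size of the symbol and its smoothness both depend on $|\eta^\perp|$, which varies over a full dyadic range $Mb\lesssim|\eta^\perp|\lesssim b$. The paper's proof handles this with an additional dyadic decomposition in $|\eta^\perp|\sim\mu$ and then sums the contributions (the volume of the region $\{|\eta|\sim b,\ |\eta^\perp|\sim\mu\}$ being $\sim\mu^2b$), which is an essential step your sketch omits. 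Without the $|\eta^\perp|$ decomposition, the refined lower bound on $|\nabla_\eta\Omega|$, and the correct derivative cost $1/|\eta^\perp|$, the exponents in the lemma cannot be recovered.
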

\noindent We now use these estimates to bound the first term of \eqref{ippespace}. Since 
they are independent of $j$ we now drop this index for concision. As in paragraph \ref{secNRT}
the $j$ index is dropped for conciseness, and 
there are 
three areas to consider: $b\lesssim c\sim a,\ c\lesssim c\lesssim a\sim b,\ a\lesssim b\sim c$.
\subparagraph{The case \mathversion{bold}{$c\lesssim a\sim b$}}  Let $\varepsilon_1>0$ to be 
fixed later. Using Minkowski's inequality, 
dispersion and the rough multiplier theorem \ref{singmult} with $s=1+\ve_1$, $\frac{1}{q}=1/2+\e-\frac{\e_1}{3}$
for $a\lesssim 1$, $s=4/3$, $\frac{1}{q_1}=7/18+\e$ for $a\gtrsim 1$ we obtain
$$
\begin{aligned}
&\big\|\n^{k_1}\int_1^{t-1}\frac{1}{s}e^{i(t-s)H}\sum_{c\lesssim a\sim b}
\mathcal{B}^{a,b,c,X}_{1}[\Theta(s)(Jz)^\pm,\Theta(s)z^\pm]ds\big\|_{L^{p}}\\
&\lesssim \int_1^{t-1}\frac{1}{s(t-s)^{1+3\ve}}\sum_{c\lesssim a\sim b\lesssim 1}
\|\mathcal{B}^{a,b,c,X}_{1}\|_{[B^{1+\ve_1}]}\|\Theta(s)Jz\|_{L^2}
\|\Theta(s)z]\|_{L^{q}}\\
&\hspace{13mm}+\sum_{c\lesssim a\sim b,\ 1\lesssim a\lesssim s^\delta}a^{k}
\|\mathcal{B}^{a,b,c,X}_{1}\|_{[B^{4/3}]}\|\Theta(s)Jz\|_{L^{2}}
\|\Theta(s)z]\|_{L^{q_1}}\big)ds\\[2mm]
&\lesssim \int_1^{t-1}\frac{1}{s(t-s)^{1+3\ve}}\big(\sum_{a\lesssim 1}\sum_{c\lesssim a\sim b}
\|\mathcal{B}^{a,b,a,X}_{1}\|_{[B^{1+\ve_1}]}\|\Theta(s)Jz\|_{L^2}
\|\Theta(s)z]\|_{L^{q}}\\
&\hspace{2cm}+\sum_{1\lesssim a \lesssim s^{\delta}}a^{k}\sum_{c\lesssim a\sim b}
\|\mathcal{B}^{a,b,c,X}_{1}\|_{[B^{4/3}]}\|\Theta(s)Jz\|_{L^{2}}
\|\Theta(s)z]\|_{L^{q_1}}\big)ds
\end{aligned}
$$
Using lemma \ref{bacrucial} and interpolation we have for $\ve_1<1/4$ and 
$\varepsilon_1-3\varepsilon>0$,
$$
\begin{aligned}
&\sum_{a\lesssim 1}\sum_{c\lesssim a\sim b}
\|\mathcal{B}^{a,b,c,X}_{1}\|_{[B^{1+\ve_1}]}\lesssim 
\sum_{a\lesssim 1}a^{1-(1+\ve_1)}\sum_{c\lesssim a}c^{\frac{3}{2}-(1+\ve_1)}\lesssim 1,\\
&\|\psi(s)\|_{L^{q}}\lesssim\|\psi(s)\|^{\frac{\e_1-3\e}{1+3\e}}_{L^{p}}\|\psi(s)\|^{1-\frac{\e_1-3\e}{1+3\e}}_{L^{2}} \lesssim \frac{\|\psi\|_{X}}{s^{\ve_1-3\ve}}.
\end{aligned}
$$ 
In high frequencies we have:
$$
\begin{aligned}
&\sum_{1\lesssim a \lesssim s^{\delta}}a^{k} \sum_{c\lesssim a\sim b} 
\frac{\la M\ra^2c^{3/2-4/3}}{\la a\ra}
\lesssim s^{\delta(k+7/6)},\
\|\psi(s)\|_{L^{q_1}}\lesssim \frac{\|\psi\|_X}{s^{1/3-3\varepsilon}}
\end{aligned}
$$
Finally we conclude that if $\min\big(\varepsilon_1-3\varepsilon,1/3-3\varepsilon
-\delta(k+7/6)\big)\geq 3\varepsilon$ (this choice is possible provided $\ve$ and $\delta$ are 
small enough):
$$
\begin{aligned}
\|\n^{k_1}\int_1^{t-1}\frac{1}{s}e^{-i(t-s)H}\big(\sum_{a,b,c}\mathcal{B}^{a,b,c,X}_{1}
[\Theta(s)(Jz)^\pm,\Theta(s)z^\pm]ds\|_{L^{p}}
&\lesssim \int_1^{t-1}\frac{\|z\|_X^2}{s^{1+3\ve}(t-s)^{1+3\ve}}
ds
\\
&\lesssim \frac{\|z\|_X^2}{t^{1+3\ve}}.
\end{aligned}
$$
The case $b\lesssim c\sim a$ is very similar, the case $a\lesssim b\sim c$ involves an infinite sum
over $a$ which can be handled as in the non time resonant case with observation \eqref{tricksuma}.
The term $\displaystyle \n^{k_1}\int_1^{t-1}\frac{1}{s}e^{i(t-s)H}\mathcal{B}^{a,b,c,X}_{1}
[\Theta(s)z^\pm,\Theta(s)(Jz)^\pm]ds$ is symmetric  while the terms
$$
\begin{aligned}
&\|\n^{k_1}\int_1^{t-1} \frac{1}{s}e^{i(t-s)H}\big(\mathcal{B}^{a,b,c,X}_{1}
[\frac{1}{s^\delta}Ri\Theta'(s)z^\pm,\Theta(s)z^\pm]\\
&\hspace{3cm}-\mathcal{B}^{a,b,c,X}_{1}[\Theta(s)z^\pm,
\frac{1}{s^\delta}Ri\Theta'(s)z^\pm]\big)ds\|_{L^p},
\end{aligned}
$$
are simpler since there is no weighted term $Jz$ involved.\vspace{2mm} \\
The last term to consider 
is 
$$\big\|\n^{k_1}\int_1^{t-1}\frac{1}{s}e^{i(t-s)H}\sum_{a,b,c}\mathcal{B}^{a,b,c,X}_{2}
[\Theta(s)z^\pm,\Theta(s)z^\pm]ds\big\|_{L^p}.  $$
Let us start with the zone $b\lesssim a\sim c$. We use the same indices as for 
$\mathcal{B}_1^{a,b,c}$: $s=1+\varepsilon_1$,
$\frac{1}{q}=1/2+\ve-\ve_1/3$, $s_1=4/3$, $\frac{1}{q_1}=7/18+\varepsilon$,
\begin{equation}\label{estimX}
\begin{aligned}
&\big\|\n^{k_1}\int_1^{t-1}\frac{1}{s}e^{i(t-s)H}\sum_{b\lesssim a}\mathcal{B}^{a,b,c,X}_{2}
[\Theta(s)z^\pm,\Theta(s)z^\pm]ds\big\|_{L^p}\\
&\lesssim \int_1^{t-1}\frac{1}{s(t-s)^{1+3\ve}}\big(\sum_{a\lesssim 1}\sum_{b\lesssim a\sim c}
U(b)U(c)\|\mathcal{B}^{a,b,c,X}_{2}\|_{[B^{1+\ve_1}]}\|U^{-1}\Theta(s)z\|_{L^2}\|U^{-1}\Theta(s)
z]\|_{L^q}\\
&\hspace{3cm}
+\sum_{1\lesssim a \lesssim s^{\delta}}a^{k}\sum_{b\lesssim a\sim c}\frac{U(b)}{\la c\ra^k}
\|\mathcal{B}^{a,b,c,X}_{2}\|_{[B^{4/3}]}\|U^{-1}\Theta(s)z\|_{L^{2}}
\|\la \n\ra^k\Theta(s)z]\|_{L^{q_1}}\big)ds
\end{aligned}
\end{equation}
For $M\lesssim 1$ we have if $\varepsilon_1<1/4$:
$$
 \sum_{a\lesssim 1}\sum_{b\lesssim c\sim a}U(b)U(c)\|\mathcal{B}_2^{a,b,c,X}\|
 _{[B^{1+\varepsilon_1}]}\lesssim 
 \sum_{a\lesssim 1}\sum_{b\lesssim c\sim a}b^{1/2-\varepsilon_1}a^{-\varepsilon_1}\lesssim 1.
$$ 
Furthermore we have from proposition $\ref{controlX}$:
\begin{equation*}
\|U^{-1}\psi(s)\|_{L^2}\lesssim\|\psi\|_{X},
\
\|U^{-1}\psi(s)\|_{L^{q}}\lesssim \|U^{-1}\psi\|_{L^2}^{1-\ve_1+3\ve}
\|U^{-1}\psi\|_{L^6}^{\ve_1-3\ve}\lesssim \frac{\|\psi\|_{X}}{s^{\frac{3(\ve_1-3\ve)}{5}}},
\end{equation*}
Now for $M\gtrsim 1$ $$
\sum_{1\lesssim a \lesssim s^{\delta}}a^{k}\sum_{b\lesssim c\sim a}
\frac{ U(b) \la M\ra^2b^{1/2-4/3}}{\la a\ra \la c\ra^k}\lesssim \sum_{1\lesssim a\lesssim s^\delta}
a \lesssim s^{\delta},\hspace{0.4cm} 
\|\la \nabla\ra^k\Theta(s)z\|_{L^{q_1}}\lesssim \frac{\|z\|_X}{s^{1/3-3\varepsilon}}.$$
If $\min\big(3(\varepsilon_1-3\varepsilon)/5,1/3-3\varepsilon-\delta\big)\gtrsim 3\varepsilon$,
injecting these estimates in \eqref{estimX} gives
$$
\big\|\n^{k_1}\int_1^{t-1}\frac{1}{s}e^{i(t-s)H}\big(\sum_{b\lesssim c\sim a}
\mathcal{B}^{a,b,c,X}_{2}
[\Theta(s)Jz,\Theta(s)z]ds\big\|_{L^{p}}
\lesssim \int_1^{t-1}\frac{\|z\|_X^2}{(t-s)^{1+3\ve}s^{1+3\ve}}ds
\lesssim \frac{\|z\|_X^2}{t^{1+3\varepsilon}}.
$$
The two other cases $c\lesssim a\sim b$ and $a\lesssim b\sim c$ can be treated in a similar way, we refer
again to the observation \eqref{tricksuma} in the case $a\lesssim b\sim c$.\\
It concludes this section, the combination of paragraphs \ref{secNRT} and \ref{secNRS} gives
\begin{equation*}
\bigg\|\int_0^te^{i(t-s)H}Q(z(s))ds\bigg\|_{W^{k,p}}\lesssim 
\frac{\|z\|_X^2+\|z\|_X^3}{\ct^{1+3\varepsilon}}.
\end{equation*}

\begin{rmq}\label{precN}
From the energy estimate, we recall that we need $k\geq 3$ (see \eqref{Fspaces}). The strongest 
condition on $N$ seems to be $(N-2-k)\delta> 1$. In the limit $\varepsilon\rightarrow 0$, we must 
have at least $1/3-\delta(k+7/6)>0$, so that $N\geq  18$.
\end{rmq}

\subsection{Bounds for the weighted norm }
The estimate for $\|x\int_0^te^{-isH}B_j[z,z]ds\|_{L^2}$ can be done with almost the same 
computations as in section $10$ from \cite{GNT3}. The only difference is that Gustafson et al
deal with nonlinearities without loss of derivatives. As we have seen in paragraph \ref{declp}, 
the remedy is to use appropriate frequency truncation, so we will only give a sketch 
of proof for the bound in this paragraph.
\paragraph{First reduction} 
Applying $xe^{-itH}$ to the generic bilinear term $U\circ B_j[z^\pm,z^\pm]$, we have for 
the Fourier transform:
\begin{equation}
\begin{aligned}
\mathcal{F}\big(xe^{-itH}\int_0^te^{i(t-s)H}UB_j[z^\pm,z^\pm]\big)=&
\int_0^t\int_{\R^d} \nabla_{\xi}\bigg(e^{-is\Omega}UB_j(\eta,\xi-\eta)
\widetilde{z^\pm}(s,\eta)\widetilde{z^\pm}(s,\xi-\eta)\biggl)d\eta \, ds\\
\end{aligned}
\end{equation}
As the $X_T$ norm only controls $\|Jz\|_{L^2}$, we have to deal with the loss of derivative in the nonlinearities. It is then convenient that $\xi-\eta\lesssim \eta$ in order to absorb the loss of derivatives;  to do this we use a cut-off function $\theta(\xi,\eta)$ 
which is valued in $[0,1]$, homogeneous of degree $0$, smooth outside of $(0,0)$ and such that 
$\theta(\xi,\eta)=0$ in a neighborhood of $\{\eta=0\}$ and $\theta(\xi,\eta)=1$  in a neighborhood 
of $\{\xi-\eta=0\}$ on the sphere. Using this splitting we get two terms
\begin{equation}
\begin{aligned}
&\int_0^t\int_{\R^d} \nabla_{\xi}\bigg(e^{-is\Omega}UB_j(\eta,\xi-\eta)\theta(\xi,\eta)
\widetilde{z^\pm}(s,\eta)\widetilde{z^\pm}(s,\xi-\eta)\biggl)d\eta \, ds, \\
&\int_0^t\int_{\R^d} \nabla_{\xi}\bigg(e^{-is\Omega}(1-\theta(\xi,\eta))UB_j(\eta,\xi-\eta)
\widetilde{z^\pm}(s,\eta)\widetilde{z^\pm}(s,\xi-\eta)\biggl)d\eta \, ds.
\end{aligned}
\label{intGer}
\end{equation}
By symmetry it suffices to consider the first one which corresponds to a region 
where $|\eta|\gtrsim |\xi|,|\xi-\eta|$ so that we avoid loss of derivatives for 
$\nabla_\xi \widetilde{z^\pm}(s,\xi-\eta)$.  
\paragraph{An estimate in a different space and high frequency losses}
Depending on which term  $\n_{\xi}$ lands, the following integrals arise:
$$
\begin{aligned}
{\cal F}I_1&=
\int_0^t\int_{\R^N} e^{-is\Omega}\n_\xi^{(\eta)} (\theta(\xi,\eta)UB_j(\eta,\xi-\eta))
\widetilde{z^\pm}(s,\eta)\widetilde{z^\pm}(s,\xi-\eta)d\eta ds,\\
{\cal F} I_2&=
\int_0^t\int_{\R^N} e^{-is\Omega} \theta(\xi,\eta)UB_j(\eta,\xi-\eta)
\widetilde{z^\pm}(s,\eta)\n_{\xi}^{(\eta)} \widetilde{z^\pm}(s,\xi-\eta)d\eta ds,\\
{\cal F} I_3&=
\int_0^t\int_{\R^N} e^{-is\Omega} (is\nabla_\xi\Omega)\theta(\xi,\eta)UB_j(\eta,\xi-\eta)
\widetilde{z^\pm}(s,\eta) \widetilde{z^\pm}(s,\xi-\eta)d\eta ds\\
&:= \mathcal{F}\bigg(\int_0^t e^{-isH} s\mathcal{B}_j[z^\pm,z^\pm]ds\bigg),
\end{aligned}
$$
with:
$$\mathcal{B}_j(\eta,\xi-\eta)=(is\nabla_\xi\Omega)\theta(\xi,\eta)UB_j(\eta,\xi-\eta).$$
The control of the $L^2$ norm of $I_1$ and $I_2$ is not a serious issue: basically we deal here with 
smooth multipliers, and from the estimate $\|z\,xe^{-itH}z\|_{L^1_TL^2}\lesssim 
\|z\|_{L^1_TL^\infty}\|xe^{-itH}z\|_{L^\infty _TL^2}\lesssim \|z\|_{X_T}^2$ it is apparent that we can conclude. The only point is that we can control the loss of derivative on $Jz$ via the truncation function $\theta_1$ and it suffices to absorb the loss of derivatives by $z$. 
Due to the $s$ factor, the case of $I_3$ is 
much more intricate and requires to use again the method of space-time resonances.\\
Let us set 
\begin{equation*}
\begin{aligned}
&\|z\|_{S_T}=\|z\|_{L^\infty_TH^1}+\|U^{-1/6}z\|_{L^2_T W^{1,6}},\\
&\|z\|_{W_T}=\|x e^{-itH}z\|_{L^\infty_T H^1}.
\end{aligned}
\end{equation*}
Gustafson et al prove in \cite{GNT3} the key estimate 
\begin{equation*}
\big\|\int_0^te^{-isH}sB[z^\pm,z^\pm]ds\big\|_{L^\infty_TL^2}\lesssim \|z\|_{S_T\cap W_T}^2,
\end{equation*}
where $B$ is a class of multipliers very similar to our $\mathcal{B}_j$, the only difference being 
that they are associated to semi-linear nonlinearities, and thus cause no loss of derivatives at 
high frequencies. We point out that the $S_T$ norm is weaker than the $X_T$ norm, indeed 
$\|U^{-1/6}z\|_{L^2_T W^{1,6}}\lesssim \|z\|_{L^2_TW^{2,9/2}}\lesssim \|z\|_{X_T}
\|1/\ct^{5/6}\|_{L^2_T}\lesssim \|z\|_{X_T}$. Moreover we have already seen how to deal with 
high frequency loss of derivatives by writing (see paragraph
\ref{secNRS})
\begin{equation}
\mathcal{B}_j[z^\pm,z^\pm]=\mathcal{B}_j[1-\Theta(t)z^\pm,z^\pm]+\mathcal{B}_j[\Theta(t)z^\pm,z^\pm].
\label{poidsterm}
\end{equation}
Let $1/q=1/3+\varepsilon$, the first term is estimated using Sobolev embedding and the fact that $N$ is large enough compared to $\delta$:
\begin{eqnarray*}
\big\|\int_0^t\int_{\R^N} e^{-isH} s\mathcal{B}_j[z^\pm,z^\pm]ds\big\|_{L^2}
\lesssim\int_0^t s\|(1-\Theta(s))z\|_{W^{3,q}}\|z\|_{W^{3,p}}ds
&\lesssim& \int_0^t\frac{\|z\|_{H^N}\|z\|_{X_T}}{\cs^{(N-4)\delta}}ds\\
&\lesssim& \|z\|_{X_T}^2.
\end{eqnarray*}
The estimate of the second term of (\ref{poidsterm}) follows from the (non trivial) computations
in \cite{GNT3}, section $10$. They are very similar to the analysis of the previous section (based 
on the method of space-time resonances), for the sake of completeness we reproduce hereafter 
a small excerpt from their computations.
\\
As in section \ref{declp}, one starts by splitting the phase space
\begin{equation*}
\int_0^t e^{i(t-s)H}s\mathcal{B}_j[\Theta(s) z^\pm,z^\pm]ds=\sum_{a,b,c}
\int_0^t e^{i(t-s)H}s\big(\mathcal{B}_j^{a,b,c,T}+\mathcal{B}_j^{a,b,c,X}\big)[\Theta(s)z^\pm,z^\pm]ds
\end{equation*}
For the time non-resonant terms, an integration by parts in $s$ implies:
\begin{equation}
\begin{array}{ll}
\displaystyle\int_0^t e^{i(t-s)H}s\mathcal{B}_j^{a,b,c,T}[\Theta(s)  z^\pm,z^\pm]ds\\
\displaystyle\hspace{3cm}=-\int_0^te^{isH}\bigg((\mathcal{B}'_j)^{a,b,c,T}[\Theta(s) z^\pm,z^\pm]ds
\displaystyle+(\mathcal{B}'_j)^{a,b,c,T}[s\Theta(s) \mathcal{N}_z^\pm,z^\pm]\\
\displaystyle\hspace{35mm}+(\mathcal{B}'_j)^{a,b,c,T}[\Theta(s)  z^\pm,s\mathcal{N}_z^\pm]+(\mathcal{B}'_j)^{a,b,c,T}[-\delta s^{-\delta}\Theta(s) |\n| z^\pm,z^\pm]\bigg)ds\\
\hspace{35mm}+
\displaystyle\big[e^{isH}(\mathcal{B}'_j)^{a,b,c,T}[s \Theta(s) z^\pm,z^\pm]\big]_0^t,
\end{array}
\label{10.41}
\end{equation}
with:
$$
(\mathcal{B}'_j)^{a,b,c,T}=\frac{1}{\Omega}\mathcal{B}_j^{a,b,c,T}=
\frac{i\nabla_{\xi}\Omega}{\Omega}B_j^{a,b,c,T}\theta(\xi,\eta),$$
We only consider the second term in the right hand side of \eqref{10.41}, in the case 
$c\lesssim b\sim a$. All the other terms can be treated in a similar way. The analog of lemma \ref{bacrucial1} in these settings is the following:
\begin{lemma}
Denoting $M=\max (a,b,c)$, $m=\min(a,b,c)$ and $l=\min(b,c)$ we have:
\begin{equation}
\|(\mathcal{B}_j')^{a,b,c,T}\|_{[H^{s}]}\lesssim \la M\ra^2\bigg(\frac{\la M \ra}{M}\bigg)^s 
l^{\frac{3}{2}-s}\la a\ra ^{-1}.
\end{equation}
\label{acrucial1}
\end{lemma}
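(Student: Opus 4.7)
The plan is to follow the interpolation scheme already used for Lemma \ref{bacrucial1}: estimate $(\mathcal{B}'_j)^{a,b,c,T}$ in $L^\infty_\xi \dot H^{s_0}_\eta$ at two endpoints (say $s_0=0$ and $s_0=2$) and then interpolate via $\|B\|_{[B^s]}\lesssim \|B\|_{L^\infty_\xi \dot H^{s_1}}^{\theta}\|B\|_{L^\infty_\xi \dot H^{s_2}}^{1-\theta}$ recalled in Section \ref{results}, thereby covering the intermediate range $0\leq s\leq 2$ claimed in $[H^s]$.

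The multiplier $(\mathcal{B}'_j)^{a,b,c,T}=\frac{i\nabla_\xi\Omega}{\Omega}\,\theta(\xi,\eta)\, B_j^{a,b,c,T}$ differs from $\mathcal{B}_3^{a,b,c,T}=\frac{U(\xi)}{i\Omega}\chi^a\chi^b\chi^c B_j$ only through the replacement of the scalar factor $U(\xi)$ by the vector $i\nabla_\xi\Omega\cdot\theta(\xi,\eta)$, so I would reuse the bookkeeping of Lemma \ref{bacrucial1} and only track the new contributions. On the support of $\chi^a\chi^b\chi^c$, the identity $\nabla H(\xi)=H'(|\xi|)\xi/|\xi|$ together with $H'(r)=(2+2r^2)/\sqrt{2+r^2}\sim \la r\ra$ yields $|\nabla_\xi\Omega|\lesssim \la M\ra$; the cutoff $\theta$ is smooth and $0$-homogeneous, so its $\eta$-derivatives cost only $1/M$; and the symbol $B_j$ satisfies $|B_j|\lesssim \la M\ra^2$ by its origin as a quadratic nonlinear multiplier (see \eqref{strucquad}). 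These three observations together account for the $\la M\ra^2$ prefactor in the statement.

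For the $s_0=0$ endpoint I would combine the pointwise bound $|(\mathcal{B}'_j)^{a,b,c,T}|\lesssim \la M\ra^3/|\Omega|$ with the non-time-resonance lower bound on $\Omega$ proved in the appendix (dealing separately with the three sign configurations $\Omega_{++}$, $\Omega_{+-}$, $\Omega_{--}$ and relying on the description of $\mathcal{T}$ and $\mathcal{S}$ in Section \ref{estimquad}), and with the fact that the $\eta$-support of $\chi^b(\eta)\chi^c(\xi-\eta)$ has volume $\lesssim l^3$. For the $s_0=2$ endpoint I would distribute two $\eta$-derivatives by Leibniz on the product $\nabla_\xi\Omega\cdot\theta\cdot\Omega^{-1}\cdot B_j$; each derivative falling on $\Omega^{-1}$ costs a factor $\la M\ra/M$ (see below), while derivatives landing on $\theta$ or $B_j$ cost only $1/M$ and are therefore less singular.

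The main obstacle is precisely the anomalous cost $\la M\ra/M$ per $\eta$-derivative of $\Omega^{-1}$. Unlike the Schr\"odinger phase which is quadratic in the frequencies, the Gross--Pitaevskii phase $H$ is only linear near the origin since $H(\xi)\sim\sqrt{2}|\xi|$ there; so $|\nabla_\eta\Omega|\lesssim \la M\ra$ while the non-resonance lower bound only delivers $|\Omega|$ of order $M$ in the low-frequency regime. This is precisely the ``parallel low-frequency resonance'' effect flagged in the paragraph preceding Section \ref{secnorm}, and it is what produces the $(\la M\ra/M)^s$ weight in the conclusion (equal to $1$ at $s=0$ and to $\la M\ra^2/M^2$ at $s=2$, matching the endpoint computations). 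A careful Leibniz expansion of $\nabla_\eta^\alpha(1/\Omega)$ in the spirit of the proof of Lemma \ref{bacrucial1} finishes the bookkeeping, after which the interpolation inequality closes the argument.
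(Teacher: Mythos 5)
Your overall plan — interpolate $\dot H^s_\eta$ bounds between two endpoints and reuse the region-by-region appendix analysis of $\mathcal{B}_3^{a,b,c,T}$, tracking the replacement of the scalar $U(\xi)$ by the vector $i\nabla_\xi\Omega\,\theta(\xi,\eta)$ — is the same strategy the paper implicitly intends (the appendix only sketches Lemmas \ref{bacrucial} and \ref{bacrucial1}, and Lemma \ref{acrucial1} is announced simply as ``the analog''). The pointwise bounds you record, $|\nabla_\xi\Omega|\lesssim\la M\ra$ from $H'(r)\sim\la r\ra$, $|B_j|\lesssim\la M\ra^2$, and $|\nabla^k_\eta\theta|\lesssim M^{-k}$ by homogeneity, are correct and are the right new ingredients.

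The genuine gap is the central assertion that ``each $\eta$-derivative falling on $\Omega^{-1}$ costs a factor $\la M\ra/M$'', which is what you invoke at the $s_0=2$ endpoint and which carries the entire content of the exponent $(\la M\ra/M)^s$. This is not the per-derivative cost in any of the appendix's sub-regions: in case 2 of the $+-$ split the bounds $|\nabla_\eta\Omega|\lesssim|\Omega|/M$ and $|\nabla_\eta^2\Omega|\lesssim|\Omega|/M^2$ give a cost of $M^{-1}$, while in case 4 — the critical low-frequency parallel-resonance region — the $\dot H^s_\eta$ estimate \eqref{11.36} reads $U(a)(M^2b^3)^{1/2}/(M^2 m (Mb)^s)$, i.e.\ a cost $(Ml)^{-1}$ per derivative, and the factor $(\la M\ra/M)^s$ in the conclusion only appears after absorbing $l^{-s}$ into the volume factor $l^{3/2}$. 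Since the exponent comes from a different mechanism in different regions (and the regions themselves differ between the $+-$, $--$ and $++$ configurations), you cannot close the argument with one uniform Leibniz expansion of $\nabla_\eta^\alpha(\Omega^{-1})$; you need to revisit each of the five $+-$ sub-regions (and their analogues) separately, substituting $\nabla_\xi\Omega\,\theta$ for $U(\xi)$ in the appendix's estimates and recomputing the $L^\infty_\xi\dot H^{s}_\eta$ norms. A related point you pass over: $U(\xi)\to 0$ as $\xi\to 0$, but $\nabla_\xi\Omega$ does not — it remains of size $\sim\la M\ra$ with only a directional discontinuity at $\xi=0$ — so the $\la a\ra^{-1}$ in the conclusion cannot come from a vanishing prefactor and must be extracted from the non-resonance lower bound on $|\Omega|$ together with the localization $\theta$ and the range $m\lesssim a$ in which the lemma is actually applied; your sketch does not make this explicit, and the claim that ``the endpoint computations match'' is asserted but not computed.
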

We have then by applying theorem \ref{singmult}:
\begin{equation}
\begin{aligned}
\|\int^{T}_{0}e^{-isH} \sum_{c\lesssim a\sim b}(\mathcal{B}'_j)^{a,b,c,T}&[s  \Theta(s)\mathcal{N}z^\pm,z^\pm]
ds\|_{L^2}\\
&\lesssim \big\|\sum_{c\lesssim a\sim b} \frac{U(c)}{\la b\ra^{2}}\|(\mathcal{B}'_j)^{a,b,c,T}\|_{[B^{1+\varepsilon}]}
\| s \la \n\ra^2\mathcal{N}_z\|_{L^{2}} \| U^{-1}z\|_{L^{\infty}(L^{6})}\big\|_{L^1_T}
\end{aligned}
\label{10.44a}
\end{equation}
From lemma \ref{acrucial1} we find 
\begin{equation}
\begin{aligned}
\sum_{c\lesssim a\sim b}U(c)\|(\mathcal{B}_3')^{a,b,c,T}\|_{[B^1]}
&\lesssim \sum_{ c\lesssim a}\frac{U(c)}{\la a\ra^2}\la a \ra^2a^{-1}
c^{\frac{1}{2}},\\
&\lesssim \sum_{ a\leq 1}a^{1/2}+\sum_{a\geq 1}a^{-1/2}\lesssim 1.
\end{aligned}
\label{10.44ab}
\end{equation}
Next we have (as previously forgetting cubic and quartic nonlinearities) 
$$\|\la\n \ra^2\mathcal{N}_z\|_{L^2}\lesssim \|z\|_{W^{4,4}}^2\lesssim 
\|z\|_{X_T}^2/\la s\ra^{3/2},$$ and from \eqref{5.9}
$\|U^{-1}z(s)\|_{L^6}\lesssim \la s\ra^{-3/5}$ so that 
\begin{equation*}
\begin{aligned}
\|\int^{T}_{0}e^{-isH} \sum_{c\lesssim a\sim b}(\mathcal{B}'_j)^{a,b,c,T}&[s\mathcal{N}z^\pm,z^\pm]
ds\|_{L^2}\lesssim \|\|z\|_{X_T}^3\la s\ra^{-21/10}\|_{L^1_T}\lesssim \|z\|_{X_T}^3.
\end{aligned}
\end{equation*}

\subsection{Existence and uniqueness}
The global existence follows from the same argument as in dimension larger than $4$: for $N=3,4$ 
combining the energy estimate (proposition \ref{energy}), the a priori estimates for cubic, quartic 
(section \ref{estimR}) and quadratic nonlinearities (section \ref{estimquad}) and the proposition \ref{estimformenormale} we have uniformly in 
$T$
\begin{eqnarray*}
\begin{aligned}
&\|\psi\|_{X_T}\leq C_1\bigg(\|\psi_0\|_{W^{k,4/3}}+\|\psi_0\|_{H^N}+\|\psi\|_{X_T}^2 G(\|\psi\|_{X_T},\|\frac{1}{1+l_1}\|_{L^\infty_T(L^\infty)})\\
&\hspace{3cm}+ \|\psi_{0}\|_{H^{2n+1}}\text{exp}\big(C'\|\psi\|_{X_T} H(\|\psi\|_{X_T},\|\frac{1}{l+1}\|_{L^\infty_T(L^\infty)})\big)\bigg).
\end{aligned}
\end{eqnarray*}
with $G$ and $H$ continuous functions so that from the standard bootstrap argument and the blow up criterion (see page 
\pageref{blowcriter}) the local solution is global.
\subsection{Scattering}\label{secscatt}
It remains to prove that $e^{-itH}\psi(t)$ converges in $H^s(\R^3)$, $s< 2n+1$. This is a 
consequence of the following lemma:
\begin{lemma}
For any $0\leq t_1\leq t_2$, we have 
\begin{equation}\label{estimsanspoids}
\|\int_{t_1}^{t_2} e^{isH}\mathcal{N}\psi ds\|_{L^2}\lesssim \frac{\|\psi\|_{X}^2}{(t_1+1)^{1/2}}.
\end{equation}
\end{lemma}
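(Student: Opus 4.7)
The plan is to reduce the claim to a Cauchy estimate for the profile $f(t)=e^{-itH}\psi(t)$: since $e^{isH}$ is an $L^2$-isometry, the stated norm equals $\|f(t_2)-f(t_1)\|_{L^2}$. Writing $\mathcal{N}\psi=Q(z)+R$ as in \eqref{EKnormal}, I would bound the cubic/quartic part $R$ and the quadratic part $Q(z)$ separately.

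For $R$, the analysis of Section \ref{estimR} already provides the pointwise $L^2$-decay $\|R(s)\|_{L^2}\lesssim \|\psi\|_X^3/\cs^2$, so by Minkowski's inequality
\[
\Big\|\int_{t_1}^{t_2} e^{isH}R\,ds\Big\|_{L^2}\le \int_{t_1}^{t_2}\|R(s)\|_{L^2}\,ds\lesssim \frac{\|\psi\|_X^3}{1+t_1},
\]
which is strictly better than what is needed.

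For $Q(z)$ the plan is to replay the phase-space splitting of Section \ref{estimquad}, using the $L^2$-isometry of $e^{isH}$ instead of its dispersive decay. Decomposing each generic quadratic term $UB_j^{a,b,c}[z^\pm,z^\pm]$ along the sets $\mathcal{NT}$ and $\mathcal{NS}$, the non-time-resonant piece is handled by integration by parts in $s$: this produces the boundary terms $[\mathcal{B}_3^{a,b,c,T}[z^\pm,z^\pm]]_{t_1}^{t_2}$, which by Lemma \ref{bacrucial1} and the $W^{k,p}$-decay $\|z(s)\|_{W^{k,p}}\lesssim \|\psi\|_X/\cs^{1+3\varepsilon}$ are bounded by $\|\psi\|_X^2/\la t_1\ra^{2+6\varepsilon}$, plus bulk terms $\int_{t_1}^{t_2} e^{isH}\mathcal{B}_3^{a,b,c,T}[\mathcal{N}_z^\pm,z^\pm]\,ds$ whose $L^2$ integrand is controlled by $\|\psi\|_X^3/\cs^{3/2}$ exactly as in paragraph \ref{secNRT}. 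The non-space-resonant piece is treated by integration by parts in $\eta$, producing the explicit factor $1/s$ together with the multipliers $\mathcal{B}_{1,j}^{a,b,c,X}$, $\mathcal{B}_{2,j}^{a,b,c,X}$ controlled by Lemma \ref{bacrucial}; as in paragraph \ref{secNRS}, the high-frequency loss of derivatives is absorbed by the truncation $\Theta(s)=\theta(|D|/s^\delta)$ and the low-frequency piece, through Theorem \ref{singmult} and Proposition \ref{controlX}, yields an $L^2$ integrand of order $\|\psi\|_X^2/\cs^{1+3\varepsilon}$. In both cases, integrating these bounds on $[t_1,t_2]$ delivers at least the claimed $(1+t_1)^{-1/2}$ rate.

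The main obstacle is essentially the one already overcome in Section \ref{estimquad}: the singular multipliers $1/\Omega$ and $\nabla_\eta\Omega/|\nabla_\eta\Omega|^2$ at the low-frequency parallel resonances must be handled by the rough multiplier Theorem \ref{singmult} together with the weighted bound $\|Jz\|_{L^2}\le \|\psi\|_X$. The simplification here is that one no longer needs any dispersive decay of the propagator, so direct Minkowski-in-time estimates suffice and the multiplier bounds of Lemmas \ref{bacrucial1}--\ref{bacrucial} apply verbatim; the decay in $t_1$ comes directly from integrating the pointwise-in-time bounds produced by those lemmas, the overall exponent $-\tfrac12$ being the worst contribution, coming from the non-time-resonant bulk term with its $\cs^{-3/2}$ integrand.
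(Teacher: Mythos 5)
Your proposal is correct and would close the lemma, but it is a much longer route than the paper's. The paper's proof is three lines: by Minkowski's inequality and the $L^2$-isometry of $e^{isH}$ one reduces to $\int_{t_1}^{t_2}\|\mathcal{N}\psi\|_{L^2}\,ds$; then since every term in $\mathcal{N}\psi$ is, up to bounded Fourier multipliers, a product of (at most two) factors of the form $\la\n\ra^2\psi$, H\"older gives $\|\mathcal{N}\psi\|_{L^2}\lesssim\|\la\n\ra^2\psi\|_{L^4}^2$; and the interpolation bound \eqref{techj} at $q=4$ yields $\|\la\n\ra^2\psi\|_{L^4}^2\lesssim\|\psi\|_X^2/\cs^{3/2}$, whose time integral on $[t_1,t_2]$ is $\lesssim(1+t_1)^{-1/2}$. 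There is no decomposition into $Q(z)+R$, no phase-space splitting, no integration by parts in $s$ or $\eta$, and no invocation of Theorem \ref{singmult} or Lemmas \ref{bacrucial1}--\ref{bacrucial}.

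The structural reason the direct bound suffices here, while the full space-time-resonance machinery was genuinely needed in Sections \ref{declp} and for the weighted norm, is that when the output norm is $L^2$ one may distribute the two factors of a quadratic term evenly in $L^4\times L^4$, so that \emph{both} carry the $\cs^{-3/4}$ decay and the total $\cs^{-3/2}$ is integrable. For the $W^{k,p}$ decay (where $p>2$) and the $\|xe^{-itH}\cdot\|_{L^2}$ bound, duality and the weight force one factor into $L^2$ (which has no time decay in the $X$-norm), leaving only $\cs^{-3/4}$ from the other factor, which is not integrable; that is where one must exploit the oscillations of the phase. So while re-running that machinery for the $L^2$ Cauchy estimate would work (as you sketch), it misses the point that this particular estimate is cheap. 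You also spend effort on the boundary term from the $s$-integration by parts, a step that is avoided entirely by the direct argument. On the plus side, the observation you open with — that the left-hand side equals $\|f(t_2)-f(t_1)\|_{L^2}$ for the profile $f=e^{-itH}\psi$, so the lemma is precisely the Cauchy criterion for scattering — is exactly the role the lemma plays in Section \ref{secscatt}.
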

\begin{proof}
We focus on the quadratic terms since the cubic  and quartic terms give even stronger decay.
From Minkowski and H\"older's inequality and the dispersion 
$\|\psi\|_{L^{p}}\leq \frac{\|\psi\|_X}{\ct+^{3(1/2-1/p)}}$: 
\begin{eqnarray*}
\|\int_{t_1}^{t_2}e^{-i(t-s)H}{\cal N}\psi ds\|_{L^2} 
\lesssim \int_{t_1}^{t_2}\|\la\n\ra^2\psi\la\n\ra^2\psi \|_{L^2}ds,
& \lesssim &\int_{t_1}^{t_2}\|\la\n\ra^2\psi \|_{L^4}^2ds,\\
 &\lesssim& \|\psi\|_{X}^2\int_{t_1}^{t_2}\frac{1}{\cs^{d/2}}ds .
\end{eqnarray*}
\end{proof}
\noindent Interpolating between the uniform bound in $H^{2n+1}$ and the decay in $L^2$ we get 
$$\|e^{-it_1H}\psi(t_1)-e^{-it_2H}\psi(t_2)\|_{H^s}\lesssim 1/\la t_1\ra^{(2n+1-s)/(4n+2)},$$ 
thus 
$e^{-itH}u$ converges in $H^s$ for any $s<2n+1$. For $d=3$, the convergence of 
$xe^{-itH}\psi$ in $L^2$ follows from an elementary but cumbersome inspection of the proof of 
boundedness of $xe^{-itH}\psi$. If one replaces everywhere 
$\displaystyle \int_0^txe^{-isH}\mathcal{N}_zds$ by 
$\displaystyle \int_{t_1}^{t_2}xe^{-isH}\mathcal{N}_zds$, every estimates ends up 
with $\|\psi\|_X^2\int_{t_1}^{t_2}/(1+s)^{1+\varepsilon'}ds$, $k=2,3,4,\ \varepsilon'>0$, 
so that $xe^{-itH}\psi$ is a Cauchy sequence in $L^2$. A careful inspection of the proof would 
also allow to quantify the value of $\varepsilon'$.

\appendix
\section{The multiplier estimates}
The aim of this section is to provide a brief sketch of proof of lemmas \ref{bacrucial} and 
\ref{bacrucial1}, let us recall that $\mathcal{B}_{1}$, $\mathcal{B}_{2}$ and $\mathcal{B}_{3}$ 
depend on the phase $\Omega=H(\xi)\mp H(\eta)\mp H(\xi-\eta)$ in the following way 
$$
\begin{aligned}
&\mathcal{B}^{a,b,c,T}_{3}=\frac{B_j}{\Omega}U(\xi)\chi^{a}\chi^b\chi^{c},\\
&\displaystyle\mathcal{B}^{a,b,c,X}_{1,j}=\frac{ B_j\nabla_{\eta}\Omega}{|\nabla_{\eta}
\Omega|^2}U(\xi)\chi^{a}(\xi)\chi^b (\eta)\chi^c (\xi-\eta),\\
&\displaystyle\mathcal{B}^{a,b,c,X}_{2,j}=\nabla_{\eta}\bigg(B_j\frac{ \nabla_{\eta}\Omega}
{|\nabla_{\eta}\Omega|^2}U(\xi)\chi^{a}(\xi)\chi^b (\eta)\chi^c (\xi-\eta)\bigg),
\end{aligned}
$$
Recall the notations:
\begin{equation}
\begin{aligned}
&|\xi|\sim a,\;|\eta|\sim b,\;|\zeta|\sim c,\\
&M=\max(a,b,c),\;m=\min(a,b,c),\;l=\min(b,c).
\end{aligned}
\label{11.6}
\end{equation}
The function $\chi_a$, resp. $\chi_b,\chi_c$, are smooth cut-off functions that localize 
near $|\xi|\sim a$ (resp $|\eta|\sim b,\ |\zeta|\sim c$).
We set as in \cite{GNT3}:
\begin{equation}
\alpha=|\hat{\zeta}-\hat{\xi}|,\;\beta=|\hat{\zeta}+\hat{\eta}|,\;
\eta^{\perp}=\hat{\xi}\times\eta.
\label{11.17}
\end{equation}
As a first reduction, we point out that the $B_j's$ satisfy the pointwise estimate 
\begin{equation}
\nabla^kB_j(\eta,\xi-\eta)|\lesssim \la M\ra ^{2}l^{-k}\label{estimBj} 
\end{equation}
We will see that the term $l^{-k}$ causes less loss of derivatives than if $\nabla_\eta$ hits 
$1/\Omega$ and $|\nabla_\eta\Omega|$, so that it will be sufficient 
to derive pointwise estimates for $\nabla^k(U/\Omega)$, $\nabla^k(U\nabla_\eta\Omega/|\nabla_\eta 
\Omega|^2$, and then multiply them by $\la M^2\ra$ to obtain pointwise estimates for the full 
multiplier.
\subsection{The \texorpdfstring{$+-$}{barzz} case}
If $\Omega=H(\xi)+H(\eta)-H(\xi-\eta)$ 
Gustafson et al in \cite{GNT3} decompose the $(\xi,\eta,\zeta)$ region (with $\zeta=\xi-\eta$) 
into the following five cases where each later case excludes the previous ones:
\begin{enumerate}
\item $|\eta|\sim|\xi|>>|\zeta|$ (or $c<<b\sim a$) temporally non-resonant.
\item $\alpha>\sqrt{3}$ temporally non-resonant.
\item $|\zeta|\geq 1$ spatially non-resonant.
\item $|\eta^{\perp}|<<M|\eta|$ temporally non resonant.
\item Otherwise spatially non-resonant.
\end{enumerate}
The estimates of lemmas $\ref{bacrucial}\ref{bacrucial1}$ are essentially a 
consequence of the pointwise 
estimates in \cite{GNT3}, section 11, except in the fifth case where we provide a necessary 
improvement. We sketch all five cases for completeness, 

\begin{enumerate}
\item If $|\eta|\sim|\xi|>>|\zeta|$, we have
\begin{equation}
|\Omega|=\Omega=H(\xi)+H(\eta)-H(\zeta)\geq H(M)\sim M\la M\ra.
\label{11.8a}
\end{equation}
\begin{equation}
|\n_{\zeta}\Omega|\lesssim |\n H(\eta)|\lesssim \la M\ra,
|\n^2_{\zeta}\Omega| \lesssim  \frac{\la m\ra}{m}.
\end{equation}
From these estimates, the $B_j$ estimate \eqref{estimBj}, the volume bound 
$|\{|\zeta|\sim m\}|\sim m^{3}$ and an interpolation 
argument we obtain $\displaystyle \big\|\frac{U(\xi)B_j}{\Omega}\chi^{a}\chi^{b}\chi^{c}\big\|
_{L^{\infty}_\xi(\dot{H}^{s}_{\zeta})}\lesssim m^{\frac{3}{2}-s}$, 
which is better than (\ref{10.43}).
\item In the second case $\alpha>\sqrt{3}$ so that $|\zeta|\sim |\eta|\gtrsim|\xi|$.\\
We cut-off the multipliers by:
$\displaystyle \chi_{[\alpha]}=\Gamma(\hat{\xi}-\hat{\zeta})$,
for a fixed $\Gamma\in C^{\infty}(\R^3)$ satisfying $\Gamma(x)=1$ for 
$|x|\geq\sqrt{3}$ and $\Gamma(x)=0$ for $|x|\leq\frac{3}{2}$. 
In this region, 
\begin{equation}
|\Omega |\geq \la M\ra |\xi| \sim  \la M\ra m,
|\n_\eta\Omega|\lesssim\frac{M m}{\la M\ra}+\frac{\la M\ra m}{M}\lesssim\frac{|\Omega|}{M},
\label{11.14}
\end{equation}
\begin{equation}
|\n_\eta^2\Omega|=|\n^2 H(\eta)-\n^2 H(\zeta)|=|\n^2 H(\eta)-\n^2 H(-\zeta)|
\lesssim\frac{\la M\ra m}{M^2}\lesssim\frac{|\Omega|}{M^2}.
\label{11.17a}
\end{equation}
As a consequence:
\begin{equation}
\|\frac{U(\xi)}{\Omega}\chi_{[\alpha|}\chi^{a}\chi^b\chi^{c}\|_{L^{\infty}_\xi(\dot{H}^{s}_{\eta})}\lesssim
\frac{\la M\ra^2}{m \la M\ra}\frac{M^{\frac{3}{2}}}{M^s}\frac{m}{\la m\ra}
=\frac{ \la M\ra M^{\frac{3}{2}-s}}{\la m\ra}\sim\frac{\la M\ra l^{\frac{3}{2}-s}}{\la a \ra}.
\label{11.18}
\end{equation}
\begin{remarka}
The use of the normal form is essential here as for general $B_j^{a,b,c}$ we would obtain in 
equation (\ref{11.18}):
\begin{equation}
\|\frac{U(\xi)}{\Omega}\chi_{[\alpha|}\chi^{a}\chi^b\chi^{c}\|_{L^{\infty}_\xi(\dot{H}^{s}_{\eta})}
\lesssim \frac{b^{3/2}}{m\la M\ra M^s\la m\ra }
\end{equation}
and the term $\frac{1}{m}$ could not be controlled. The same issue applies for the next areas.
\end{remarka}
\item  The case $M\sim|\zeta| \gtrsim 1$ and $\alpha<\sqrt{3}$. 
We remind that the symbols to estimate are:
\begin{equation}
\frac{\n_\eta\Omega}{|\n_\eta\Omega|^2 } U(\xi)
\chi^a(\xi)\chi^b(\eta)\chi^c(\xi-\eta),
\n_\eta\cdot((\mathcal{B}_{1}^{a,b,c,X})')
\label{utile}
\end{equation}
According to \cite{GNT3}, the pointwise estimates in this region are 
\begin{equation}
|\n_\eta\Omega|\sim ||\zeta|-|\eta||+\la\eta\ra \beta \gtrsim |\xi|,\
|\n^k_\eta \Omega|\lesssim\frac{\la\zeta\ra}{|\zeta|}|\xi|\,|\eta|^{1-k}\lesssim 
|\xi|\,|\eta|^{1-k}.
\label{11.21}
\end{equation}
Differentiating causes the same growth near $|\eta|=0$ as in \eqref{estimBj}, we deduce for 
$s\in[0,2]$
\begin{equation}
\begin{aligned}
&\big\|B_j\frac{\n_\eta\Omega}{|\n_\eta\Omega|^2}\chi^{C}_{[\alpha]}
U(\xi)\chi^a(\xi)\chi^b(\eta)\chi^c(\xi-\eta)\big\|_{\dot{H}^{s}_\eta}
\lesssim\frac{\la M\ra^2 b^{\frac{3}{2}}}{a b^s}U(a)=\la M\ra^2l^{\frac{3}{2}-s}\la a\ra^{-1},\\
&\big\|\n_\eta\cdot\big(\frac{\n_\eta\Omega}{|\n_\eta\Omega|^2}\cdot B_j\chi^{C}_{[\alpha]}
U(\xi)\chi^a(\xi)\chi^b(\eta)\chi^c(\xi-\eta)\big)\big\|_{\dot{H}^{s}_\eta}
\lesssim l^{\frac{1}{2}-s}\la a\ra^{-1}.
\end{aligned}
\label{11.23}
\end{equation}

\item The case $|\eta^{\ort}|<<M|\eta|$ corresponds to a low frequency region, where the symbol has 
a ``wave-like'' behaviour.  In this region
\begin{equation}
1>>M\sim|\zeta|,\;\alpha<\sqrt{3},\ |\eta^{\ort}|=|\eta||\sin(\widehat{(\eta,\xi)})|<<M |\eta|,
\label{11.26}
\end{equation}
The localization uses the (singular) cut-off multiplier 
$\displaystyle \chi_{[\ort]}=\chi\bigg(\frac{|\eta^\perp|}{100Mb}\bigg)$
with $\chi\in C^\infty_0(\R)$ satisfying $\chi(u)=1$ for $|u|\leq 1$ and $\chi(u)=0$ for 
$|u|\geq 2$. 
In particular 
$\displaystyle |\n^k_\eta\chi_{[\ort]}|\lesssim \big(\frac{1}{Mb}\big)^k,$
for all $k\geq 1$. 
The worst case is $M=|\zeta|$, in this case $\Omega$ does not cancel thanks to the slight 
radial convexity of $H$:
\begin{equation}
\Omega=H(\xi+\eta)-H(\xi)-H(\eta)\sim \frac{|\xi||\eta|(|\xi|+|\eta|)}{\la\xi\ra+\la\eta\ra}
\sim M^2m,\ 
|\n_\eta\Omega|<<|\xi|.
\label{11.33}
\end{equation}
For higher derivatives we have:
\begin{equation}
|\n^{1+k}_\eta\Omega|=|\n^{k+1}H(\eta)-\nabla^{k+1}H(\zeta)|\lesssim \frac{|\xi|}{M|\eta|^k},
\ |\nabla_\eta^kB_j|\lesssim l^{-k}.
\label{11.35}
\end{equation}
For $|\eta|\sim b,\ |\eta^\perp|<<Mb$, the region has for volume bound $b(Mb)^2=M^2b^3$,
we get by integration (for $s$ integer) and interpolation
\begin{equation}
\bigg\|\frac{U(\xi)}{\Omega}\chi_{[\ort]} \chi_{[\alpha]}^C  \chi^{a}\chi^b \chi^{c}
\bigg\|_{L^2_{\eta}}\lesssim \frac{U(a) (M^2 b^{3})^{1/2}}{M^2 m (Mb)^s }\lesssim 
l^{\frac{1}{2}-s}M^{-s} .
\label{11.36}
\end{equation}
\item 
In the last case we need a slight refinement of the symbol estimates from 
\cite{GNT3}: in the fifth area,
$|\eta^\perp|\gtrsim Mb\sim |\zeta||\eta|$, $M\sim |\zeta|<<1$, 
$\alpha=|\widehat{\zeta}-\widehat{\xi}|\leq \sqrt{3}$.\vspace{3mm}\\
We have $|\nabla_\eta\Omega|=|H'(|\eta|)\widehat{\eta}+H'(|\zeta|)\widehat{\zeta}|\sim 
H'(|\eta|)-H'(|\zeta|)+|\widehat{\eta}+\widehat{\zeta}|\geq |\widehat{\eta}+\widehat{\zeta}|$, and
$$
|\widehat{\eta}+\widehat{\zeta}|\geq \frac{|\eta\wedge \zeta|}{|\eta||\zeta|}
=\frac{|\eta\wedge (\xi-\eta)|}{|\eta||\zeta|}
=\frac{|\eta\wedge \xi|}{|\eta||\zeta|}=\frac{|\eta^\perp||\xi|}{|\eta||\zeta|}.
$$
indeed, if $\eta,\zeta$ form an angle $\theta$, 
$|\eta\wedge \zeta|=|\eta||\zeta|
|\sin\theta|\text{ and }|\widehat{\eta}+\widehat{\zeta}|\geq |\sin \theta)|.
$
Thus $|\nabla_\eta\Omega|\gtrsim |\xi||\eta^\perp|/(|\eta||\zeta|)\gtrsim |\xi|$ 
(in \cite{GNT3}, the authors only used $|\nabla_\eta\Omega|\gtrsim |\zeta|\,|\xi|$).\\
For the higher derivatives, we combine $(\ref{11.35})$ with 
$|\nabla_\eta\Omega|\gtrsim |\xi||\eta^\perp|/|\eta||\zeta|$ to get
\begin{equation}\forall\,k\geq 2,
\frac{|\n_\eta^k\Omega|}{|\n_\eta\Omega|}\lesssim\frac{|\xi|}{M|\eta|^{k-1}\beta}
\lesssim \frac{1}{|\eta|^{k-2}|\eta^\ort|}.
\end{equation}
so that we have the pointwise estimate
$$
\bigg|\nabla_\eta^k\frac{\nabla_\eta\Omega}{|\nabla_\eta\Omega|^2}\bigg|\sim 
\frac{1}{|\nabla\Omega|}\bigg(\frac{|\nabla^2_\eta\Omega|}
{|\nabla_\eta\Omega|}\bigg)^k\lesssim \frac{1}{|\xi||\eta^\perp|^{k}}.
$$
Following \cite{GNT3}, we then use a dyadic decomposition 
$|\eta^\perp|\sim \mu\in 2^j\Z,\ Mb\lesssim \mu\lesssim b.$
For each $\mu$ integrating gives a volume bound $\mu b^{1/2}$ and using interpolation we get
for $s>1$
\begin{equation*}
\|U(\xi)/|\nabla_\eta\Omega|\|_{\dot{H}^s_\eta}\lesssim \sum_{Mb\lesssim \mu\lesssim b}
\frac{U(a)\mu b^{1/2}}{a\mu ^{s}}\sim l^{3/2-s}M^{1-s}
\end{equation*}

\end{enumerate}

\subsection{The other cases}
\paragraph{The $-+$ case} This case is clearly symmetric from the $+-$ case.
\paragraph{The $--$ case}
The decomposition follows the same line as in \cite{GNT3}. Note however that the analysis 
is simpler at least for $M\geq 1$. Indeed in this area 
$|\n_\eta \Omega|\sim |H'(\eta)-H'(\zeta)|+|\widehat{\eta}-\widehat{\zeta}|\gtrsim 
\big||\eta|-|\zeta|\big|+|\widehat{\eta}-\widehat{\zeta}|\sim |\eta-\zeta|$
so that we might split it as $\{|\eta-\zeta|\gtrsim \max(|\eta|,|\zeta|)\}$ and 
$\{|\eta-\zeta|<<\max(|\eta|,|\zeta|)\}$. The first region is obviously space non resonant. 
The second region is 
time non resonant, indeed since $M\gtrsim 1$ we have in this region 
$|\xi|\sim |\eta|\sim |\zeta|\gtrsim 1$. Using a Taylor development gives
\begin{equation*}
H(\xi)-H(\eta)-H(\zeta)=
H(2\eta+\zeta-\eta)-H(\eta)-H(\eta+\zeta-\eta)=H(2\eta)-2H(\eta)+O(\la a\ra|\zeta-\eta|),
\end{equation*}
this last quantity is bounded from below by $|\eta|^2$ for 
$|\eta|\gtrsim 1$, $|\zeta-\eta|$ small enough.\\
For $M<1$, we can follow the same line as for $Z\overline{Z}$ by inverting the role of $\xi$ and 
$\zeta$. Note that the improved estimate in the last area relied on $|\n_\eta\Omega_{+-}|\gtrsim 
|\widehat{\eta}+\widehat{\zeta}|\geq |\eta^\perp|\xi|/(|\eta||\zeta|)$ and can just be replaced 
by $|\n_\eta\Omega_{--}|\gtrsim 
|\widehat{\eta}-\widehat{\zeta}|\geq |\eta^\perp|\xi|/(|\eta||\zeta|)$.
\paragraph{The $++$ case} We have $\Omega=H(\xi)+H(\eta)+H(\zeta)\gtrsim 
(|\xi|+|\eta|+|\zeta|)(1+|\xi|+|\eta|+|\zeta|)$, the area is time non resonant.
\subsection*{Acknowledgement:} The first author has been partially funded by the ANR project 
BoND ANR-13-BS01-0009-01. The second author  has been partially funded by the ANR project 
INFAMIE ANR-15-CE40-0011.

\bibliography{biblio}
\bibliographystyle{plain}

\end{document}